\documentclass{amsart}

\usepackage{graphicx,xcolor}
\usepackage{amssymb}
\usepackage{amscd}
\usepackage{longtable}
\usepackage{pb-diagram}
\usepackage[all]{xy}

\newtheorem{theorem}{Theorem}[section]
\newtheorem{proposition}[theorem]{Proposition}
\newtheorem{lemma}[theorem]{Lemma}

\newtheorem{problem}[theorem]{Problem}

\newtheorem{corollary}[theorem]{Corollary}
\newtheorem*{claim*}{Claim}
\theoremstyle{definition}

\newtheorem{example}[theorem]{Example}

\newcommand{\N}{\Bbb N}

\newcommand{\G}{G(K)}

\newcommand{\Z}{\Bbb Z}
\newcommand{\Q}{\Bbb Q}
\newcommand{\D}{\Delta}

\newcommand{\la}{\langle}
\newcommand{\ra}{\rangle}

\newcommand{\ord}{\mathcal{O}}

\theoremstyle{remark}
\newtheorem{remark}[theorem]{Remark}

\numberwithin{equation}{section}

\allowdisplaybreaks[4]

\begin{document}
\title{Twisted Alexander  vanishing groups of knots} 

\author{Katsumi Ishikawa, Takayuki Morifuji, and Masaaki Suzuki}

\thanks{2020 {\it Mathematics Subject Classification}. 
Primary 57K14, Secondary 57K10.}

\thanks{{\it Key words and phrases.\/}
Twisted Alexander polynomial, TAV group, TAV order.}

\begin{abstract}
In our previous work, we introduced the notion of a twisted Alexander vanishing (TAV) group, defined as a finite group for which the corresponding twisted Alexander polynomial of a knot vanishes. In this paper, we discuss the orders of TAV groups and construct knots whose twisted Alexander polynomials vanish. 
Moreover, we show that every faithful irreducible representation of a TAV group causes the twisted Alexander polynomial to be zero.  
\end{abstract}

\address{Research Institute for Mathematical Sciences, Kyoto University, Kyoto 606-8502, Japan}
\email{katsumi@kurims.kyoto-u.ac.jp}

\address{Department of Mathematics, Hiyoshi Campus, Keio University, Yokohama 223-8521, Japan}
\email{morifuji@keio.jp}

\address{Department of Frontier Media Science, 
Meiji University, 4-21-1 Nakano, Nakano-ku, Tokyo 
164-8525, Japan}
\email{mackysuzuki@meiji.ac.jp}

\maketitle

%%%%%%%%%%%%%%%%%%%%%%%%%%%%%%%%%%%%%%%%%%%%%%%%%%%%%%%%%%%%%%%%%%%%%%%%%%%%%%%%%%%%%%%%%%%%%%%%%%%%%%%%%%%%%%%%%%%%
\section{Introduction}

The twisted Alexander polynomial is introduced by Lin \cite{Lin01-1} for knots in the $3$-sphere $S^3$, and by Wada \cite{Wada94-1} for finitely presentable groups. It is a generalization of the classical Alexander polynomial, and is defined for the pair of a group (e.g., a knot group) and its representation. The theory of twisted Alexander polynomials has developed over the last twenty-five years. For example, it is known that fibered $3$-manifolds are detected by twisted Alexander polynomials associated with finite groups (see \cite{FV11-1}). See \cite{FV10-1}, \cite{Morifuji15-1}, \cite{Suzuki15-1} for other applications to knot theory and low-dimensional topology. 

We call a finite group $G$ a \textit{twisted Alexander vanishing (TAV) group of a knot $K$} in $S^3$ if there exists an epimorphism $f$ of the knot group $G(K)$ onto $G$ such that the twisted Alexander polynomial associated with  the composition of $f$ and the regular representation of $G$ is zero (see \cite{IMS23-1}). Furthermore, we call $K$ a \textit{fibered knot} if the complement of $K$ in $S^3$ admits a structure of a surface bundle over the circle such that the closures of the fiberes are Seifert surfaces. Then, a vanishing theorem for non-fibered knots due to Friedl and Vidussi is described as follows: 

\begin{theorem}[{\cite[Theorem~1.2]{FV13-1}}]\label{thm:FV}
Every non-fibered knot $K$ admits a TAV group of $K$. 
\end{theorem}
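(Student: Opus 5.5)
The plan follows the Friedl--Vidussi strategy: combine subgroup separability of the knot group with Stallings' fibration theorem. Write $X$ for the knot exterior, $\phi\colon G(K)\to\Z$ for the abelianization, and take a finite group $G$ with an epimorphism $f\colon G(K)\to G$. The twisted Alexander polynomial of $f$ composed with the regular representation computes the (rational) Alexander polynomial of the finite cover $\tilde X_f\to X$ corresponding to $\ker f$, with respect to the pulled-back class $\tilde\phi=\phi\circ p_*$. More precisely, $\Delta$ vanishes exactly when $H_1(\tilde X_f;\Q[t^{\pm1}])$ is not $\Q[t^{\pm 1}]$-torsion. Equivalently, $b_1$ of the infinite cyclic cover of $\tilde X_f$ determined by $\tilde\phi$, taken as a module over the relevant ring, is not finite-dimensional over $\Q$ in the appropriate sense. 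So the goal is to find a finite quotient $G$ for which this homology has positive rank.

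Fix a minimal-genus Seifert surface $\Sigma$ and let $M$ be $X$ cut along $\Sigma$, with the two inclusions $\iota_\pm\colon\pi_1\Sigma\to\pi_1 M$. Since $K$ is not fibered, Stallings' theorem together with Neuwirth's result shows that $\iota_+$ (say) is not surjective. We may assume $\Sigma$ is incompressible, so $\iota_\pm$ are injective. The key step is to find a finite quotient $\alpha\colon \pi_1 M\to H$ with $\alpha(\iota_+\pi_1\Sigma)\neq\alpha(\pi_1 M)$. We also want $\alpha$ to extend compatibly to a finite quotient of $G(K)=\pi_1 M *_{\pi_1\Sigma}$ (an HNN extension). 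Here we invoke subgroup separability: by Agol and Wise, knot groups, being fundamental groups of Haken manifolds, are virtually special, and by Przytycki--Wise, graph manifold pieces are LERF for the relevant subgroups. This lets us separate the proper subgroup $\iota_+\pi_1\Sigma$ from an element $g\in\pi_1M\setminus \iota_+\pi_1\Sigma$ in a finite quotient of $G(K)$. Friedl--Vidussi in fact only need a weaker separability for the pair, which is available.

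Given such an $f\colon G(K)\to G$, we compute $\Delta$ via a Mayer--Vietoris argument for the decomposition of $\tilde X_f$ along the lifts of $\Sigma$. The twisted chain complex gives the exact sequence
\[
H_1(\Sigma;\Q[G])\otimes\Q[t^{\pm1}]\xrightarrow{\ \iota_+ t-\iota_-\ } H_1(M;\Q[G])\otimes\Q[t^{\pm1}]\to H_1(X;\Q[G][t^{\pm1}])\to\cdots
\]
and the analogous sequence in degree $0$. Non-surjectivity of $\alpha\circ\iota_+$ onto $\alpha(\pi_1M)$ means that the $\Q[G]$-coefficient $H_0$ groups have different dimensions. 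Concretely, $\dim H_0(\Sigma;\Q[G]) > \dim H_0(M;\Q[G])$, because the numbers of cosets differ. Combined with the Euler characteristic equality $\chi(\Sigma)=\chi(M)$, this forces $\dim H_1(\Sigma;\Q[G])>\dim H_1(M;\Q[G])$. Then the map $\iota_+t-\iota_-$ between free $\Q[t^{\pm1}]$-modules goes from larger rank to smaller rank, so it has nontrivial kernel. Via the degree $2$/degree $1$ part of the sequence, this yields non-torsion homology, hence $\Delta=0$, and $G$ is a TAV group of $K$.

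I expect the main obstacle to be the separability step, namely producing a finite quotient of the whole knot group (not just of $\pi_1M$) that witnesses non-surjectivity. It genuinely relies on the deep virtual specialness and LERF results, and it must be arranged so that the resulting $G$ is a quotient of $G(K)$ through which $\phi$ still factors appropriately. The homological bookkeeping after that is routine.
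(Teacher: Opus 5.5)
The paper does not prove this statement; it quotes it from Friedl--Vidussi \cite{FV13-1}. Your outline is the right strategy: a minimal genus $\Sigma$, non-surjectivity of $\iota_+$ by Stallings, a finite quotient $f\colon G(K)\to G$ with $f(\iota_+\pi_1\Sigma)\subsetneq f(\pi_1M)$ coming from separability, and then Mayer--Vietoris. However, the step you call routine bookkeeping is wrong as written.

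Put $a=[G:f(\pi_1\Sigma)]$ and $b=[G:f(\pi_1M)]$, so $a>b$. You pass from this to $\dim H_1(\Sigma;\Q[G])>\dim H_1(M;\Q[G])$ using $\chi(\Sigma)=\chi(M)$, and that silently assumes $H_2(M;\Q[G])=0$. This holds with $\Q$-coefficients by Alexander duality, but not with $\Q[G]$-coefficients. Here $H_2(M;\Q[G])$ is the second homology of the induced finite cover of $M$, and it is nonzero, for instance, as soon as a component of that cover has disconnected boundary. The correct identity is $\dim H_1(\Sigma;\Q[G])-\dim H_1(M;\Q[G])=(a-b)-\dim H_2(M;\Q[G])$, and nothing in your argument controls the last term.

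Moreover, even if you grant the inequality, the kernel of $t\iota_+-\iota_-$ in degree one is the image of $H_2(X;\Q[G][t^{\pm1}])$. So what you would obtain is that $H_2$ is non-torsion. Getting to $H_1$ needs a further Euler characteristic argument over $\Q(t)$, using $\chi(X)=0$ and the fact that $H_0$ is torsion, and you do not give it.

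The fix is to stop at degree zero, where you already have everything you need. Consider the exact segment $H_1(X;\Q[G][t^{\pm1}])\to H_0(\Sigma;\Q[G])\otimes\Q[t^{\pm1}]\to H_0(M;\Q[G])\otimes\Q[t^{\pm1}]$, where the second map is $t\iota_+-\iota_-$. The kernel of this map between free $\Q[t^{\pm1}]$-modules of ranks $a>b$ has rank at least $a-b>0$. That kernel is a quotient of $H_1(X;\Q[G][t^{\pm1}])$, so $H_1$ is not torsion, and the polynomial vanishes by Remark~\ref{rmk:torsion}.

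Your justification of the separability input is also inaccurate. Knot groups are indeed virtually special, but not because they are Haken: Sol manifolds are Haken and not virtually special. Virtual specialness also does not by itself make arbitrary subgroups separable. What is needed is precisely that $\pi_1\Sigma$, the fundamental group of an embedded incompressible surface, is separable in $G(K)$. This holds in any compact irreducible $3$-manifold by the theorem of Przytycki--Wise on embedded surfaces.

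Since you separate $g\in\pi_1M\setminus\iota_+\pi_1\Sigma$ from $\iota_+\pi_1\Sigma$ inside $G(K)$ itself, the resulting quotient is already an epimorphism from $G(K)$. So the extension problem you flag as the main obstacle does not actually arise; that step is just a citation.
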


By abuse of terminology, we also call a finite group $G$ a \textit{TAV group} if $G$ is a TAV group of some knot $K$. 
In our previous paper \cite{IMS23-1}, we provide a characterization of a TAV group. In order to state the result precisely, 
we recall some terminologies in group theory. 
The \textit{weight} of a group $G$, denoted by $w (G)$, is the smallest integer $n$ such that $G$ is the normal closure of $n$ elements. In order to remove ambiguity, we set $w(\{e\})=0$. For a group $G$, there are a knot $K$ and an epimorphism $f\colon G(K)\to G$ if and only if $G$ is finitely generated and $w(G)\leq1$ (see \cite{GA75-1}, \cite{Johnson80-1}). 
We call an element $g$ of a group $G$ a {\it  weight element} if the normal closure of $g$ coincides with $G$. 
This term is found in \cite{Hillman}. 
For an example, a meridian of a knot $K$ is a weight element of the knot group $G(K)$. 
%Given a prime number $p$, a 
A finite group $G$ is a \textit{$p$-group} if and only if the order $|G|$ is a power of a prime number. 
%$p$. 
We simply denote the commutator subgroup $[G,G]$ of a group $G$ by $G'$. Then, we have:

\begin{theorem}[{\cite[Theorem~1.5]{IMS23-1}}]\label{thm:main-4}
A finite group $G$ is a TAV group if and only if 
$w(G)=1$ and $G'$ is not a $p$-group. 
\end{theorem}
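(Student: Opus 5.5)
The proof has two directions, and both reduce to the structure of the twisted Alexander polynomial attached to the regular representation. Fix a knot $K$, an epimorphism $f\colon G(K)\to G$, and the regular representation $\rho$ of $G$. Let $\alpha\colon G(K)\to\langle t\rangle$ be the abelianization. The polynomial $\Delta_{K,\rho\circ f}(t)$ is then, up to units, the order of the twisted homology $H_1(E_K;\Q[G][t^{\pm1}])$. Shapiro's lemma identifies this module with $H_1$ of the finite cover $E_f$ of the knot exterior corresponding to $\ker f$, where the coefficients are $\Q[t^{\pm1}]$ via $\alpha$ pulled back to the cover. So the polynomial vanishes exactly when this $\Q[t^{\pm 1}]$-module has positive rank. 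This in turn happens exactly when the infinite cyclic cover of $E_f$ determined by $\alpha$ has infinite-dimensional rational first homology, i.e.\ when $b_1(\ker(\alpha\times f)) = \infty$ in the sense of the corresponding Alexander module.

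\textbf{Necessity.} The condition $w(G)=1$ is automatic, since meridians normally generate $G(K)$ and $f$ is onto. Now suppose $G'$ is a $p$-group. Since $f$ maps $G(K)'$ onto $G'$, the map $f|$ sends the commutator subgroup $G(K)'$ onto the finite $p$-group $G'$. The infinite cyclic cover $X_\infty$ of $E_K$ has fundamental group $G(K)'$ and has finitely generated rational homology, because $\Delta_K(1)=\pm1$. Moreover $\Delta_K(1)=\pm1$ gives $H_*(X_\infty;\Z_{(p)})$ of the form $\Z_{(p)}[t^{\pm1}]/\Delta$ with $t-1$ invertible. The relevant cover is the $G'$-cover of $X_\infty$, which is a $p$-group cover. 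I would apply a Smith-theory or Fox-type argument, using the fact that the group ring $\mathbb F_p[G']$ has augmentation ideal nilpotent, to show that the $\mathbb F_p$-homology of this $p$-cover, as a module over $\mathbb F_p[t^{\pm1}]$, stays torsion. Equivalently, I would work with $\mathbb F_p[G'][t^{\pm1}]$ and filter by powers of the augmentation ideal. Each graded piece is a quotient of copies of $H_1(X_\infty;\mathbb F_p)$, which is $\mathbb F_p[t^{\pm1}]$-torsion. Passing to rational coefficients via a universal coefficient and rank argument then gives a nonzero polynomial. This is in the spirit of the standard proof that twisted polynomials for $p$-group quotients of the commutator subgroup are nonzero.

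\textbf{Sufficiency.} Given $G$ with $w(G)=1$ and $G'$ not a $p$-group, I need to construct a knot $K$ and an epimorphism $f$ with vanishing polynomial. First take any knot $K_0$ with an epimorphism onto $G$; this exists by the Gonz\'alez-Acu\~na/Johnson result, since $w(G)\le 1$. Next modify $K_0$, for example by a connected sum with a suitably chosen knot, or by tangle replacement realizing extra relators, so that the cover $E_f$ acquires an essential non-fibered piece. Concretely, the goal is that $\ker(\alpha\times f)$ maps onto a free group of rank $\ge 2$, which forces infinite first Betti number. The hypothesis that $G'$ is not a $p$-group supplies two distinct primes dividing $|G'|$. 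With these I would build a nontrivial quotient in which two elements of coprime orders generate, in the kernel, something like a free product $\Z/p * \Z/q$-type configuration. Such a configuration has a finite index free subgroup of rank $\ge2$.

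The main obstacle is this sufficiency construction: realizing $G$ with a knot whose $G$-cover has positive-rank Alexander module. I would first try the case where $G'$ contains elements $a,b$ of coprime prime orders. I would choose a satellite or connected-sum knot whose group surjects onto $G$ with meridian images fixed, and check via a Fox-calculus computation that the twisted determinant vanishes.
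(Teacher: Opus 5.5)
Your necessity half is essentially sound, but your sufficiency half is not a proof, and the one concrete criterion you give for it is false. Write $\phi$ for the abelianization (your $\alpha$). You claim that if $\ker(\phi\times f)$ maps onto a free group of rank $\ge 2$, then its first Betti number is infinite. Counterexample: for a fibered knot of genus $g\ge 1$ and any $f$, $\ker(\phi\times f)$ is a finite-index subgroup of $G(K)'\cong F_{2g}$, hence free of finite rank $\ge 2$, yet $\Delta_K^{\rho\circ f}(t)\neq 0$.

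Connected sums cannot help either. The meridional annulus has $\Q[t^{\pm1}]$-torsion twisted homology, so by Mayer--Vietoris the rank for $K_0\# K_1$ is the sum of the ranks of the summands. Hence vanishing for $K_0\#K_1$ forces vanishing for some summand with respect to its own image subgroup, which is circular. Nor is ``$G'$ has elements of two coprime prime orders'' a real case restriction: Cauchy gives it whenever $|G'|$ is not a prime power. What matters is the subgroup such elements generate.

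Two ideas are missing, and they are exactly what the paper's construction (Theorem~\ref{const-thm}) supplies.
\begin{itemize}
\item \emph{A group-theoretic reduction.} $G'$ contains a subgroup $H$ that is either cyclic of order $pq$, or non-abelian of weight one (e.g.\ $C_p^n\rtimes C_q$). The second alternative cannot be avoided: $A_5$ is a TAV group with $G'=G$ and has no element of order $pq$.
\item \emph{A linking-number-zero satellite.} Since $H\subset G'$, a weight element $h$ of $H$ equals $f_0(\alpha)$ for an unknotted loop $\alpha\subset E_{K_0}$ with $\mathrm{lk}(K_0,\alpha)=0$. Take $K=K_0(\alpha,J)$. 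Because $\phi$ vanishes on $G(J)$, the preimage of $E_J$ in the $G\times\Z$-cover is infinitely many copies of a \emph{finite} cover of $E_J$. That is the source of infinite rank.
\end{itemize}
The choice of $J$ then depends on $H$. If $H\cong C_{pq}$, take $J=T(p,q)$. Its Alexander polynomial is the $pq$-th cyclotomic polynomial, so its $pq$-fold branched cover has $b_1>0$, and the Mayer--Vietoris argument of Proposition~\ref{const-prop-1} applies. If $H$ is non-abelian, take $J$ with $G(J)\to H$ sending the meridian to $h$ and the longitude to $e$. Proposition~\ref{const-prop-2} then writes down an explicit nontrivial lift and concludes via Theorem~\ref{lifting-thm}. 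This is also where the hypothesis on $G'$, rather than on $G$, is used: linking number zero forces $f_0(\alpha)\in G'$.

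For necessity, two repairs are needed.
\begin{itemize}
\item The Alexander module is not cyclic in general. All you need is that $H_1(X_\infty;\mathbb{F}_p)$ is finite-dimensional, which holds because $t-1$ acts surjectively on it.
\item More seriously, passing from $\mathbb{F}_p$ to $\Q$ is not a plain universal-coefficient step. The homology of the cover need not be finitely generated over $\Z$; for instance $\Z[t^{\pm1}]/(pt-1)$ vanishes mod $p$ but not rationally. Instead use that $M=H_1(E_K;\Z[G][t^{\pm1}])$ is finitely generated over $\Z[t^{\pm1}]$. If $M$ had positive rank, localizing at the prime $(p)$ would show that $M/pM$ has positive $\mathbb{F}_p[t^{\pm1}]$-rank. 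Since $M/pM$ injects into $H_1(E_K;\mathbb{F}_p[G][t^{\pm1}])$, this contradicts your augmentation-ideal bound. Alternatively, reduce Wada's determinant mod $p$.
\end{itemize}
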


For example, we easily see that the symmetric group $S_n~(n\geq4)$, the alternating group $A_n~(n\geq5)$, 
and the dihedral group $D_n$ of order $2n$ where $n$ is the product of at least two distinct odd primes are TAV groups. 
On the other hand, any abelian group is not a TAV group. 
Moreover, any $p$-group is not a TAV group. Since a finite $p$-group $G$ is nilpotent, $G$ is cyclic when $w(G)=1$. 

Let us recall the notion of the \textit{twisted Alexander vanishing order} 
$\ord(K)$ of a non-fibered knot $K$ (see \cite{IMS23-1} for details). 
We define $\ord(K)$ to be the order of the smallest TAV group of $K$, 
and call it the \textit{TAV order} of $K$ in short (we call it the \textit{minimal order} of $K$ in \cite{MS22-1}). 
The TAV orders of some knots are obtained in \cite[Theorem 3.2]{MS22-1} and \cite[Theorem 1.3]{IMS23-1}.
Here, we note that the smallest TAV group might be not unique for a non-fibered knot $K$ in general (see Corollary \ref{cor:273}). 
In \cite{IMS25-1}, we introduced the notion of a {\it seed} for a TAV group. 
If a TAV group is not a seed, then this group never realizes the TAV order of a knot, 
to be precise, there exists a smaller TAV group for which the twisted Alexander polynomial of a knot is zero (see Example \ref{exam:60210}). 

The purpose of this paper is to explore several properties of TAV groups. 
In particular, we discuss the following fundamental questions: 

\begin{itemize}
\item[(A)]
For a given natural number $n$, construct a TAV group of order $n$ and find the number of TAV groups of order $n$.

\item[(B)]
Find a non-fibered knot admitting a given TAV group.

\item[(C)]
Which irreducible representation of a TAV group provides the twisted Alexander polynomial zero. 

\end{itemize}

For (A), we consider the factorization of the order $n$ of a finite group. 
Suppose that $n$ is the product of at most four primes or square free.  
For such cases, the number of finite groups of order $n$ was determined in \cite{DEP}, \cite{Holder}. 
In this paper, we can select TAV groups among finite groups of order $n$. 

\begin{theorem}\label{thm:main-3}
Let $n$ be the product of at most four primes or square free.  
We can describe the number of TAV groups of order $n$. 
In particular, we can determine whether there exists a TAV group of order $n$ or not. 
\end{theorem}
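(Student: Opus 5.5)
By Theorem~\ref{thm:main-4}, a finite group $G$ is a TAV group exactly when $w(G)=1$ and $G'$ is not a $p$-group. The plan is therefore to take the known classifications of groups of order $n$ — Hölder's for squarefree $n$ \cite{Holder}, and the lists for $n$ a product of at most four primes \cite{DEP} — and check these two conditions group by group. Two reductions make this manageable.

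\textbf{Reduction for the weight condition.} For a finite group, $w(G)=1$ if and only if the abelianization $G/G'$ is cyclic. One direction is immediate. For the converse, pass to $G/\Phi(G)$ and use the standard fact that a finite group is normally generated by one element whenever its abelianization is cyclic; this follows from the solvable case via the Frattini argument, or from the Kutzko-type result. So the test becomes: $G/G'$ is cyclic and $G'$ is not of prime-power order. In particular $|G'|$ must have at least two distinct prime divisors. Hence $n$ needs at least three prime factors counted with multiplicity, since $G/G'$ is nontrivial unless $G$ is perfect, and no perfect groups occur among these orders except $A_5$ (order $60=2^2\cdot3\cdot5$) and products involving it. For at most two primes, no TAV groups exist.

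\textbf{Squarefree case.} Hölder's theorem says every group of squarefree order $n$ is metacyclic:
\[
G=\langle a,b\mid a^m=b^k=1,\ bab^{-1}=a^r\rangle,
\]
with $mk=n$, $\gcd(m,k(r-1))=1$, and $r$ of order $k$ modulo $m$. Here $G'=\langle a\rangle$ has order $m$ and $G/G'\cong \Z/k$ is cyclic, so $w(G)=1$ always. Thus $G$ is TAV if and only if $m$ has at least two prime factors. Hölder's counting formula sums over divisors $m$ of $n$ the products $\prod_{p\mid k}(p^{c(p)}-1)/(p-1)$, where $c(p)$ is the number of primes $q\mid m$ with $q\equiv1 \pmod p$. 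Restricting that sum to those $m$ with $\omega(m)\ge2$ gives the number of TAV groups of order $n$. Such a group exists if and only if there are distinct primes $q_1,q_2\mid n$ and a prime $p\mid n/(q_1q_2)$ with $p\mid q_i-1$ for each $i$ — more precisely, a subgroup of $(\Z/m)^\times$ of order $k$ acting nontrivially on every prime factor of $m$.

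\textbf{Three or four prime factors with repetition.} Here I would go through the types $p^2q$, $p^3q$, $p^2q^2$, $p^2qr$ from the tables in \cite{DEP}, computing $G'$ and $G/G'$ for each isomorphism class. For $p^2q$, the commutator subgroup lies in a Sylow subgroup or its quotient, so $G'$ is always a $p$-group and there are no TAV groups. For the four-prime types, TAV groups arise only when $G'$ contains elements of two different prime orders, for example $G'\cong \Z/q\times\Z/r$ or $A_4$-type pieces, as in $S_4$ with $G'=A_4$. Each family is then filtered by congruence conditions such as $p\mid q-1$ and $p\mid r-1$.

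\textbf{Main obstacle.} The hard part is the order-$p^2qr$ and $p^3q$ families with small primes: $p=2$, $q=3$, and the exceptional cases $n=24$, $60$, including $A_5$, which is perfect with $w=1$ and $G'=A_5$ not a $p$-group. These need careful case-by-case verification against the DEP lists, and I would cross-check them computationally with the small groups library.
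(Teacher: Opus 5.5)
\textbf{The squarefree count is wrong.} You read the summation index $m$ in H\"{o}lder's formula as $|G'|$, and it is not. The $m$-th term counts the groups $C_m\rtimes C_{n/m}$ in which \emph{every} prime $p\mid n/m$ acts nontrivially; the factor $(p^{c_m(p)}-1)/(p-1)$ is the number of lines in $\mathbb{F}_p^{c_m(p)}$. So $C_m=G'\times Z(G)$ and $m=|G'|\,|Z(G)|$. In particular, the term $m=n$ is the empty product $1$ and corresponds to the cyclic group $C_n$, so your restricted sum always counts $C_n$ as a TAV group. For $n=30$ it gives $0+0+3+1=4$, but the only TAV group of order $30$ is $D_{15}$. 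The $m=15$ term also contains $C_3\times D_5$ and $C_5\times S_3$, whose commutator subgroups have prime order. In your own normalization $\gcd(m,k(r-1))=1$, where $m$ really is $|G'|$, the count for fixed $m$ is the number of homomorphisms $C_k\to(\mathbb{Z}/m)^\times$ nontrivial on every $C_q$ with $q\mid m$, up to $\mathrm{Aut}(C_k)$; that is not H\"{o}lder's product. The paper avoids this by counting the complement. A group of squarefree order fails to be TAV iff $G'$ is trivial (only $C_n$) or $G'\cong C_p$. The latter are exactly $(C_p\rtimes C_d)\times C_{n/pd}$ with $1<d\mid\gcd(p-1,n)$, which gives $2^{\bar{c}_n(p)}-1$ groups for each $p$. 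Subtracting these and $1$ from H\"{o}lder's total yields Theorem~\ref{numtavgroupsquarefree}.

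\textbf{The existence criterion is also incorrect.} Take $n=1155=3\cdot5\cdot7\cdot11$. No prime divisor of $n$ divides two of the numbers $q-1\in\{2,4,6,10\}$, yet $(C_7\rtimes C_3)\times(C_{11}\rtimes C_5)$ has $G/G'\cong C_{15}$ and $G'\cong C_{77}$, so it is TAV. The right condition is that there exist distinct primes $q_1,q_2\mid n$ such that each $q_i-1$ has a prime factor dividing $n/(q_1q_2)$. Your ``more precisely'' reformulation is not equivalent to your first criterion, and it is too strong, because $G/G'$ need not act faithfully on $G'$. It fails for $n=1365=3\cdot5\cdot7\cdot13$, although $((C_7\times C_{13})\rtimes C_3)\times C_5$ is TAV.

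For the non-squarefree orders your outline matches the paper's: normal Sylow subgroups for $p^2q$, $p^3q$, $p^2q^2$ with the exceptions $n=24$ and $n=36$, then the Dietrich--Eick--Pan lists for $p^2qr$. However, it remains only a plan: $n=36$ is never addressed and no count is actually produced.
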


The precise statement of Theorem \ref{thm:main-3} is given in 
Theorem \ref{thm_numTAVgroupsfourprimes} and Theorem \ref{numtavgroupsquarefree} as a partial answer for (A).   

By tracking the proof of Theorem \ref{thm:main-4}, we can theoretically find such a knot in (B), but it seems excessively impractical. As an accessible way to find a concrete non-fibered knot $K$ admitting a given TAV group $G$, we can adopt the \textit{satellite knot construction}. See Subsection \ref{subsec:3.0} for precise description of the knot. 
By \cite[Theorem 1.2]{IMS23-1}, it is easy to see that there are infinitely many \textit{composite} knots with the same TAV order. However, it is unclear that we can find infinitely many \textit{prime} knots with these TAV orders. 
As shown in \cite[Corollary 1.4]{IMS23-1}, there are infinitely many prime knots $K$ with $\ord(K)=24$. As for the question (B), we can extend the result in this paper as follows: 

\begin{theorem}\label{thm:main-5}
For any TAV group $G$, there are infinitely many hyperbolic knots that admit $G$ as a TAV group. Moreover, if $G$ is realized as a smallest TAV group of a knot, then infinitely many hyperbolic knots also admit $G$ as a smallest TAV group. 
\end{theorem}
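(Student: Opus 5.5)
Given a TAV group $G$, Theorem~\ref{thm:main-4} gives a knot $K_0$ and an epimorphism $f_0\colon G(K_0)\to G$ with $\Delta_{K_0,\rho\circ f_0}(t)=0$, where $\rho$ is the regular representation. The plan is to transport this vanishing to infinitely many hyperbolic knots by a degree-one-type map construction. The cleanest tool is the following: if there is an epimorphism $\varphi\colon G(K)\to G(K_0)$ sending meridian to meridian, then $f=f_0\circ\varphi$ is onto $G$. Moreover, the twisted Alexander polynomial of $K$ for $\rho\circ f$ is divisible by that of $K_0$ for $\rho\circ f_0$ (Kitano--Suzuki--Wada). Hence it vanishes, and $G$ is a TAV group of $K$.

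First I will produce infinitely many hyperbolic knots $K$ with such meridian-preserving epimorphisms onto $G(K_0)$. I would use the known result (Kawauchi's imitation theory, or the constructions of Silver--Whitten and Ohtsuki--Riley--Sakuma via periodic/band-sum tangle replacements) that every knot $K_0$ admits infinitely many hyperbolic knots $K$ with an epimorphism $G(K)\to G(K_0)$ preserving meridians. Concretely: tie a local twist region in a diagram of $K_0$ and replace it by a suitable hyperbolic tangle whose strings are trivializable in the quotient. Infinitely many distinct hyperbolic examples then come from varying twist parameters, distinguished by volume or by the Alexander polynomial, using Thurston's hyperbolic Dehn surgery. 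This step is where I expect the main obstacle: guaranteeing hyperbolicity together with a genuine epimorphism. I would cite Kawauchi's theorem that any knot is imitated by hyperbolic knots with a degree-one map of exteriors, which yields the epimorphism directly.

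For the second assertion, suppose $G$ is a smallest TAV group of $K_0$, with $|G|=\ord(K_0)$. For $K$ as above, $G$ is a TAV group of $K$, so $\ord(K)\le |G|$. For the reverse inequality, I need that no smaller group is a TAV group of $K$. The natural route is to make the epimorphism $\varphi$ induce a bijection on epimorphisms to finite groups of order less than $|G|$. Alternatively, it suffices to control vanishing: for any epimorphism $g\colon G(K)\to H$ with $|H|<|G|$, we need $\Delta_{K,\rho_H\circ g}\neq 0$. I will arrange this via the imitation property: Kawauchi's imitations can be chosen so that $\varphi$ induces isomorphisms on all finite quotients up to any prescribed order bound. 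Equivalently, the strings of the replaced tangle lie deep in the lower central/derived series, or in the kernel of every map to a group of order $<|G|$. Then every such $g$ factors through $\varphi$, and the polynomials agree up to units, by the divisibility together with equality of the relevant twisted homology. Hence they are nonzero by minimality for $K_0$, giving $\ord(K)=|G|$. Should such control be unavailable, I would instead use the satellite construction of Subsection~\ref{subsec:3.0}. That route pairs $K_0$ with an explicit hyperbolic pattern and checks directly the finitely many groups of order below $|G|$ that satisfy Theorem~\ref{thm:main-4}.
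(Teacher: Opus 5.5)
Your first assertion is fine by a different route from the paper's. Grant the cited existence of infinitely many hyperbolic knots $K$ with a meridian-preserving epimorphism $\varphi\colon G(K)\to G(K_0)$ (e.g.\ from Kawauchi's imitation theory). Then the divisibility $\Delta_{K_0}^{\rho\circ f_0}(t)\mid\Delta_K^{\rho\circ f_0\circ\varphi}(t)$ from \cite{KSW05-1} transfers the vanishing. The paper instead builds such knots explicitly. It writes $K_0$ as the closure of a pseudo-Anosov $n$-braid $b$ and takes $K_k=\widehat{b^k}$ with the natural quotient $p_k\colon G(K_k)\to G(K_0)$. Hyperbolicity then comes from Thurston's hyperbolic Dehn surgery on $K_0\cup\alpha$ ($\alpha$ the braid axis), followed by the $k$-fold cyclic branched cover along $\alpha$. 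That periodic structure is exactly what the second assertion needs, and this is where your proposal has a genuine gap.

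Suppose every homomorphism $g\colon G(K)\to H$ with $|H|<|G|$ factors as $g=g_0\circ\varphi$. You assert this for imitations without reference or argument; note also that the lower central series of a knot group stabilizes at $G(K)'$, so ``deep in the lower central series'' gives nothing. Even so, an epimorphism only yields $\Delta_{K_0}^{\rho\circ g_0}(t)\mid\Delta_K^{\rho\circ g}(t)$. The cofactor records the contribution of $\ker\varphi$ to twisted homology and may vanish. Your ``equality of the relevant twisted homology'' is precisely what must be proved, and it is false in general. In Proposition~\ref{const-prop-1} with $K=T(p,qr)$ and $J=T(q,r)$, the map $f=f_0\circ\psi$ factors through $\psi$, and $\Delta_{T(p,qr)}^{\rho\circ f_0}(t)\neq0$ since torus knots are fibered. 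Yet $\Delta_{K_{p,q,r}}^{\rho\circ f}(t)=0$ because $\Delta_J(e^{2\pi i/qr})=0$. By Theorem~\ref{lifting-thm}, vanishing is witnessed by a lift into the infinite group $\Z[H\times\Z]\rtimes(H\times\Z)$. Such a lift need not be detected by any homomorphism to a group of order below $|G|$, so controlling small finite quotients cannot suffice.

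The missing ingredient is control of the cofactor. In the paper this comes from periodicity: \cite[Theorem~3]{HLN06-1} gives $\Delta_{K_k}^{\rho\circ f_k}(t)=\Delta_{K_0}^{\rho\circ f}(t)\prod_{j=1}^{k-1}F(t,\zeta_k^j)$. Here $F$ is the two-variable twisted polynomial of $K_0\cup\alpha$; it is nonzero when $\Delta_{K_0}^{\rho\circ f}(t)\neq0$ and has $s$-width at most $n|H|$. Choosing $k$ coprime to every integer $\le n\cdot\ord(K_0)^n$ then gives two things at once. First, the factorization holds, since the Hurwitz actions of $b$ and $b^k$ on $H^n$ have the same fixed points. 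Second, $F(t,\zeta_k^j)\neq0$ for all $j$. Your satellite fallback does not repair this. A satellite knot with nontrivial companion contains an essential torus and is never hyperbolic, so it would still need a hyperbolization step such as the braid-power construction.
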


Here, a knot $K$ in $S^3$ is \textit{hyperbolic} if the complement $S^3\setminus K$ admits the complete hyperbolic metric of finite volume. We remark that a hyperbolic knot is prime. 

In particular, if the order of a TAV group is the product of three distinct primes, we obtain the following. 

\begin{theorem}\label{thm:main-6}
Let $G$ be a TAV group of order $pqr$, where $p,q,r$ are primes.
Then there exist infinitely many hyperbolic knots that admit $G$ as a smallest TAV group.
%in particular, there exist infinitely many hyperbolic knots $K$ with $\ord(K) = pqr$.
\end{theorem}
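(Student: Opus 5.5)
Theorem \ref{thm:main-6} follows from Theorem \ref{thm:main-5} once we show that a TAV group $G$ of order $pqr$ is already a smallest TAV group of some knot. So the goal is to prove that no TAV group has order a proper divisor of $pqr$, and then to exhibit one knot that admits $G$ as a TAV group.

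\emph{Step 1: no smaller TAV groups.} Suppose $H$ is a TAV group of order $m$ with $m$ a proper divisor of $pqr$, so $m$ is a product of at most two primes. By Theorem \ref{thm:main-4}, $w(H)=1$ and $H'$ is not a $p$-group; in particular $H'$ is nontrivial and $|H'|$ has at least two distinct prime factors. The order $|H'|$ divides $m$ and is a proper divisor of $m$, since $H'=H$ would force $H$ perfect, which is impossible for solvable $H$ of order $m$. A proper divisor of a product of at most two primes is $1$ or a prime power, which contradicts the previous sentence. Hence any knot $K$ for which $G$ is a TAV group satisfies $\ord(K)\ge pqr$.

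\emph{Step 2: structure of $G$ and equality.} Since groups of order $pqr$ are solvable, the same argument forces $|G'|$ to be a product of exactly two distinct primes, so $p,q,r$ are not all equal. It is enough to find one knot $K$ whose group surjects onto $G$ with vanishing twisted Alexander polynomial for the regular representation; such a $K$ exists because $G$ is a TAV group. For that $K$, Step 1 gives $\ord(K)=pqr=|G|$, so $G$ is a smallest TAV group of $K$. Theorem \ref{thm:main-5} then yields infinitely many hyperbolic knots admitting $G$ as a smallest TAV group.

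\emph{Obstacle.} Step 1 needs care when the primes are not distinct. The proper divisors to check are then of the form $p$, $p^2$, $pq$, and so on. The argument above covers all of them, since every proper divisor of $m$ is a prime power and a nontrivial $H'$ of prime-power order is a $p$-group. So the only real content is making sure the minimality statement is exactly what Theorem \ref{thm:main-5} requires, and that part is routine.
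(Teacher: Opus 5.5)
There is a genuine gap, and it sits exactly where you called the remaining work routine.

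\textbf{The gap.} Your Step 1 correctly shows that no TAV group has order a proper divisor of $pqr$. The next sentence, ``hence any knot $K$ for which $G$ is a TAV group satisfies $\ord(K)\ge pqr$'', does not follow. $\ord(K)$ is the minimum of $|H|$ over \emph{all} finite groups $H$ onto which $G(K)$ surjects with vanishing twisted Alexander polynomial. Such an $H$ need not be a subgroup or quotient of $G$, so its order need only be smaller than $pqr$, not divide it. For example, $S_4$ is a TAV group with $|S_4|=24<30=|D_{15}|$.

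In fact your Step 2 claim is false for a general knot. Suppose $K_1$ has $S_4$ as a TAV group and $K_2$ has $G$ as a TAV group. Then $K_1\# K_2$ admits $G$ as a TAV group: map $G(K_1)$ through its abelianization onto the cyclic group generated by the image of the meridian, and extend by zero the nonzero derivation for $K_2$ that vanishes on the meridian (Theorem~\ref{thm_52}). Yet $\ord(K_1\# K_2)\le 24$. So the hypothesis in the second half of Theorem~\ref{thm:main-5}, that $G$ is realized as a smallest TAV group of some knot, is the whole content of the statement. It cannot come from the group theory of $G$ alone: you must pick a specific knot and control every epimorphism from its group onto every group of order less than $pqr$.

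\textbf{What the paper does.} It uses the satellite knot $K_{p,q,r}=T(p,qr)(\alpha,T(q,r))$. Here ${\rm lk}(K,\alpha)=0$ and $\alpha$ maps to a generator of $G'\cong C_{qr}$; by Theorem~\ref{thm_numTAVgroupsfourprimes}, $p,q,r$ are distinct with $q\equiv r\equiv 1 \pmod p$.
\begin{itemize}
\item \emph{Vanishing for $G$.} This follows from Proposition~\ref{const-prop-1}, since $\Delta_{T(q,r)}$ is the $qr$-th cyclotomic polynomial.
\item \emph{Reduction.} For minimality, take any epimorphism $g$ onto $H$ with $|H|<pqr$. Lemma~\ref{lem:qr-grp}, applied to the image of $G(T(q,r))$ modulo its center, shows that this image is metabelian, and cyclic unless $qr$ divides $|H|$. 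Hence $g$ induces a homomorphism on $G(T(p,qr))$.
\item \emph{Case $qr\mid |H|$.} Then $p\nmid |H|$, so the image of $G(T(p,qr))$ is cyclic. Thus $\alpha$ dies and $H$ is cyclic, hence not a TAV group.
\item \emph{Case $qr\nmid |H|$.} Then $g$ factors through $\psi$. Since $T(p,qr)$ is fibered and $\Delta_{T(q,r)}(e^{2\pi i/d})\neq 0$ when $qr\nmid d$, Proposition~\ref{const-prop-1} gives nonvanishing.
\item \emph{Conclusion.} Only after this is Theorem~\ref{thm:main-5} invoked to obtain infinitely many hyperbolic knots.
\end{itemize}
Your Step 1 is correct but does no work in this argument.
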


It is well known that every representation of a finite group $G$ can be expressed as the direct sum of irreducible representations of $G$. 
In this sense, every irreducible representation of $G$ is an atom of representations of $G$.  
Even if the twisted Alexander polynomial associated to the regular representation is zero, 
it is unclear which irreducible representation causes the twisted Alexander polynomial to be zero. 
Then we will discuss the question (C) and obtain the following.  

\begin{theorem}\label{thm:main-7}
Let $G$ be a TAV group and $\rho \colon G \to {\rm GL}(V)$ a faithful irreducible representation of $G$. Then, there exist a knot $K$ and an epimorphism $f \colon G(K) \to G$ such that $\Delta_K^{\rho \circ f}(t) = 0$.
\end{theorem}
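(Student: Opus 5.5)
The plan is to reduce the statement to the regular-representation case handled in Theorem~\ref{thm:main-4}, using the fact that twisted Alexander polynomials behave well under direct sums and under restriction to companions of satellite knots.

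First recall that if $\rho=\bigoplus_i \rho_i$, then $\Delta_K^{\rho\circ f}(t)=\prod_i \Delta_K^{\rho_i\circ f}(t)$ up to units. In particular $\Delta_K^{\mathrm{reg}\circ f}=\prod_{\sigma}(\Delta_K^{\sigma\circ f})^{\dim\sigma}$ over the irreducibles $\sigma$ of $G$. Vanishing is equivalent to the $\mathbb{C}[t^{\pm1}]$-module $H_1(X_\infty;V_\rho)$ failing to be torsion, where $X_\infty$ is the infinite cyclic cover of the knot exterior. So $\Delta^{\rho\circ f}_K=0$ exactly when $\rho$ "detects" a non-torsion piece of the twisted homology. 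The task is to build $(K,f)$ so that this non-torsion piece sits inside every faithful irreducible.

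Second, I would use the satellite construction of Subsection~\ref{subsec:3.0}. Take $K=P(C)$, a satellite with pattern $P$ of winding number $0$ (or suitable winding number) in a solid torus and companion $C$. By Mayer--Vietoris along the splitting torus, $\Delta_K^{\rho\circ f}$ equals $\Delta_P^{\rho\circ f|}$ times the companion factor, up to torus factors which are nonzero. The companion factor is essentially $\Delta_C^{\rho|_H\circ f_C}$, where $H=f(G(C))$ and $f_C$ is the induced map. Thus it suffices to arrange two things:
\begin{itemize}
\item[(i)] $f$ is onto $G$, with the pattern part realizing a weight element of $G$;
\item[(ii)] the restriction $\rho|_H$ contains an irreducible constituent $\tau$ of $H$ with $\Delta_C^{\tau\circ f_C}=0$.
\end{itemize}

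Using the hypothesis that $G'$ is not a $p$-group (Theorem~\ref{thm:main-4}), choose primes $p\neq q$ dividing $|G'|$. Then pick a small subgroup $H\le G$ generated by a conjugate-closed configuration, e.g.\ $H=\langle g,N\rangle$ with $g$ a weight-type element and $N$ a minimal normal-type piece of $G'$. Choose it so that $H$ is itself a TAV group whose vanishing comes from a specific, explicitly understood companion $C$ (for instance a non-fibered knot such as those in \cite{IMS23-1} with $\ord=pq$-type dihedral or metacyclic quotients). For such $H$ one can identify exactly which irreducibles $\tau$ of $H$ give $\Delta_C^{\tau\circ f_C}=0$. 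These should be precisely the irreducibles that are nontrivial on the relevant non-$p$-part $N$ of $H'$.

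Third, faithfulness enters here. Since $\rho$ is faithful, $\rho|_N$ is nontrivial. By Clifford theory, all constituents of $\rho|_N$ are $G$-conjugate, so none is trivial, and hence $\rho|_H$ contains some $\tau$ nontrivial on $N$. Combined with (ii), this gives $\Delta_K^{\rho\circ f}=0$. Irreducibility of $\rho$ is used only through Clifford's theorem, to avoid $\rho$ splitting into a piece trivial on $N$ plus something else.

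I expect the main obstacle to be step (ii). One must choose the subgroup $H$ and companion $C$ so that the set of irreducibles of $H$ killing $\Delta_C$ is characterized purely by nontriviality on a normal subgroup $N$ that every faithful $\rho$ sees. My first attempt would be $H$ metacyclic, $N\cong\Z/q$ acted on by an element of order $p$, and $C$ a knot with a dihedral/metacyclic quotient for which the twisted polynomials of all nonabelian irreducibles vanish. If a single $N$ does not suffice, I would take connected sums of several such companions, one for each minimal normal subgroup of $G$ inside $G'$, since vanishing of any factor suffices.
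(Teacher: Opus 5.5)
Your plan has a genuine gap at step (ii), and the satellite framework it relies on is wrong. With winding number $0$, $\phi$ vanishes on $G(C)$ and on the splitting torus $T$. So $H_*(T;V[t^{\pm1}])=H_*(T;V)\otimes\mathbb{C}[t^{\pm1}]$, and likewise for $E_C$. These modules are free, and the torus term is already non-torsion as soon as the peripheral image fixes a nonzero vector. Hence there is no factorization ``pattern factor times $\Delta_C^{\tau\circ f_C}$ up to nonzero torus factors'', and the companion's contribution is not $\Delta_C^{\tau\circ f_C}$ at all.

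With nonzero winding number $w$ you do get a genuine factor $\Delta_C^{\rho\circ f_C}(t^w)$. But if it vanishes for a constituent $\tau$ of $\rho|_H$, then $\Delta_C^{\mathrm{reg}_H\circ f_C}=0$. So $H=f_C(G(C))$ must itself be a TAV group, and you need the theorem for $H$. This is circular: for instance $D_{15}$ has no proper TAV subgroup, which forces $H=G$.

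Your concrete first attempt cannot work. The group $H=\mathbb{Z}/q\rtimes\langle x\rangle$ has $H'$ a $q$-group, so it is not TAV by Theorem~\ref{thm:main-4}. Hence for every knot $C$ and every epimorphism onto $H$, the regular-representation polynomial is nonzero, and so is every irreducible factor.

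The criterion ``$\tau$ nontrivial on $N$'' is also false. The two-dimensional irreducibles of $D_{15}$ pulled back from $D_5$ are nontrivial on the minimal normal subgroup $C_5$, yet they never give vanishing, because $D_5$ is not TAV. Finally, Clifford theory only tells you that $\rho|_N$ has no trivial constituent, and that is not the property that produces vanishing.

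The missing idea is what winding number $0$ actually buys you. The companion can contribute $t$-independent derivations into $V$ that vanish on the boundary torus. These glue with the zero derivation on the pattern side to give a nontrivial lift as in Theorem~\ref{thm_52}, with no TAV requirement on the companion's image. Two such companions suffice:
\begin{itemize}
\item[(a)] If some $g\in G'$ has $\rho(g)$ with a primitive $pq$-th root of unity $\zeta$ as an eigenvalue, take $J=T(p,q)$ with meridians sent to $g$, and use $\Delta_{T(p,q)}(\zeta)=0$ (Lemma~\ref{v1-lem}).
\item[(b)] If a weight element $h$ of a nonabelian $H\le G'$ fixes a vector $v$ that some conjugate $h'$ of $h$ moves, take $J$ with $G(J)\to H$, $m\mapsto h$, $\ell\mapsto e$, and the derivation $u\mapsto f_J(u)v-v$ (Lemma~\ref{v2-lem}).
\end{itemize}

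Faithfulness and irreducibility must then be used to produce (a) or (b). Suppose $G'$ is nilpotent, and take Sylow subgroups $P,Q$ of $G'$ for distinct primes. Choose $g_p\in Z(P)$ of order $p$ and a nontrivial eigenspace $V_\alpha$ of $\rho(g_p)$; it is $G'$-stable. Let $Q_0$ be the subgroup of elements of $Z(Q)$ of order dividing $q$; it is normal in $G$, so it has no nonzero fixed vector (Lemma~\ref{irr-lem}). Hence some $g_q\in Q_0$ has an eigenvalue $\beta\ne1$ on $V_\alpha$, and $g_pg_q$ has eigenvalue $\alpha\beta$ of order $pq$. This simultaneous-eigenvector statement is exactly what your Clifford argument never reaches.

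If $G'$ is not nilpotent, you need an analysis of minimal non-nilpotent (Schmidt) subgroups of $G'$ to land in (a) or (b). For example, $A_5$ has no element of order $pq$, so there only (b) is available.
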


This paper is organized as follows. 
In Section \ref{sec:2}, 
we quickly recall the definition and one of vanishing criteria of twisted Alexander polynomials. 
In Section \ref{section:pqr}, we consider the above question (A). For a certain natural number $n$, we discuss TAV groups of order $n$.  
In Section \ref{sec:4}, we construct a knot admitting a given TAV group as an answer for (B), and prove Theorems \ref{thm:main-5} and \ref{thm:main-6}. 
In Section \ref{sec:5}, we give a proof of Theorem \ref{thm:main-7} as an answer for (C). 

A part of this paper is contained in our unpublished manuscript \cite{IMS-II}. In this paper, we develop it from the viewpoint of TAV groups. 

%%%%%%%%%%%%%%%%%%%%%%%%%%%%%%%%%%%%%%%%%%%%%%%%%%%%%%%%%%%%%%%%%%%%%%%%%%%%%%%%%%%%%%%%%%%%%%%%%%%%%%%%%%%%%%%%%%%%
\section{Preliminaries}\label{sec:2}
%%%%%%%%%%%%%%%%%%%%%%%%%%%%%%%%%%%%%%%

Let $X$ be a connected finite CW complex, 
$\phi\in H^1(X;\Z)=\mathrm{Hom}(\pi_1(X),\Z)$, 
and $\rho\colon\pi_1(X)\to \mathrm{GL}(n, R)$ a homomorphism to a general linear group over a Noetherian unique factorization domain $R$. 
Define a right $\Z[\pi_1(X)]$-module structure on 
$R^n\otimes_\Z\Z[t^{\pm1}]=R[t^{\pm1}]^n$ as follows: 
$$
(v\otimes p)\cdot g=(v\cdot \rho(g))\otimes(p\cdot t^{\phi(g)}),
$$
where 
$g\in \pi_1(X)$ and $v\otimes p\in R^n\otimes_\Z\Z[t^{\pm1}]$. 
Here, we view $R^n$ as row vectors. 
Taking tensor product, we obtain a homomorphism 
$\rho\otimes\phi\colon\pi_1(X)\to \mathrm{GL}(n,R[t^{\pm1}])$. 

We denote by $\tilde{X}$ the universal covering of $X$, 
and use the homomorphism 
$\rho\otimes\phi$ to regard 
$R[t^{\pm1}]^n$ as a right $\Z[\pi_1(X)]$-module. 
The chain complex $C_*(\tilde{X})$ is a left 
$\Z[\pi_1(X)]$-module via deck transformations. 
We can therefore consider the tensor products
$$
C_*(X;R[t^{\pm1}]^n)
:=R[t^{\pm1}]^n\otimes_{\Z[\pi_1(X)]}C_*(\tilde{X}),
$$ 
which form a chain complex of $R[t^{\pm1}]$-modules. 
We then consider the $R[t^{\pm1}]$-modules 
$H_*(X;R[t^{\pm1}]^n)
:=H_*(C_*(X;R[t^{\pm1}]^n))$. 

Since $X$ is compact and $R[t^{\pm1}]$ is Noetherian, 
these modules are finitely presented over $R[t^{\pm1}]$. 
We then define the \textit{twisted Alexander polynomial} of $(X,\phi,\rho)$ 
to be the order of $H_1(X;R[t^{\pm1}]^n)$ 
as a left $R[t^{\pm1}]$-module. 
We will denote it as $\D_{X,\phi}^\rho(t)\in R[t^{\pm1}]$, 
and note that $\D_{X,\phi}^\rho(t)$ is well defined up to multiplication by a unit in $R[t^{\pm1}]$. See \cite{FV11-1} for other basic properties of twisted Alexander polynomials. 

For a homomorphism $f \colon \pi_1(X) \to G$ to a finite group $G$, we get the representation
$$
\pi_1(X)\overset{f}{\longrightarrow} G
\overset{\rho}{\longrightarrow} \mathrm{Aut}_\Z(\Z[G]),
$$ 
where the second map is given by the right multiplication. 
We can also identify $\mathrm{Aut}_\Z(\Z[G])$ with 
$\mathrm{GL}(|G|,\Z)$, and 
obtain the corresponding twisted Alexander polynomial $\Delta_{X, \phi}^{\rho \circ f}(t)$. Then, the vanishing of $\Delta_{X, \phi}^{\rho \circ f}(t)$ is characterized as follows:

\begin{theorem}[{\cite[Theorem~4.2]{IMS23-1}}]\label{lifting-thm}
The twisted Alexander polynomial $\Delta^{\rho \circ f}_{X, \phi}(t)$ is zero if and only if there exists a nontrivial lift $\tilde{f} \colon \pi_1(X) \to \mathbb{Z}[G \times \mathbb{Z}] \rtimes (G \times \mathbb{Z})$, where $G \times \mathbb{Z}$ acts on $\mathbb{Z}[G \times \mathbb{Z}]$ by the left multiplication, of the homomorphism $f \times \phi \colon \pi_1(X) \to G \times \mathbb{Z}$, i.e., a group homomorphism $\tilde{f}$ such that $p_{G \times \mathbb{Z}} \circ \tilde{f} = f \times \phi$ and ${\rm Im}\; \tilde{f} \cap (\mathbb{Z}[G \times \mathbb{Z}] \times \{(e, 0)\}) \neq \{(0; e, 0)\}$, where $p_{G \times \mathbb{Z}} \colon \mathbb{Z}[G \times \mathbb{Z}] \rtimes (G \times \mathbb{Z}) \to G \times \mathbb{Z}$ is the projection.
\end{theorem}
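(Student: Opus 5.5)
The plan is to translate both sides of Theorem~\ref{lifting-thm} into statements about the $(G\times\Z)$-covering of $X$ determined by $f\times\phi$, and then compare them using the inflation--restriction sequence. Throughout, write $\pi=\pi_1(X)$, $H=G\times\Z$, $N=\ker(f\times\phi)$ and $\Lambda=\Z[t^{\pm1}]$.

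\emph{Step 1 (the polynomial side).} There is an isomorphism of right $\Z[\pi]$-modules $\Z^{|G|}\otimes_\Z\Lambda\cong\Z[G]\otimes\Lambda\cong\Z[H]$, where $\pi$ acts through $f\times\phi$. Hence $C_*(X;\Lambda^{|G|})\cong\Z[H]\otimes_{\Z[\pi]}C_*(\tilde X)$. This is the cellular chain complex of the covering $\hat X=\tilde X/N$, which may be disconnected if $f\times\phi$ is not onto. It follows that $H_1(X;\Lambda^{|G|})\cong H_1(\hat X;\Z)$ as $\Z[H]$-modules, and in particular as $\Lambda$-modules. The order of a finitely generated $\Lambda$-module $M$ is zero exactly when $M$ is not $\Lambda$-torsion. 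This happens exactly when $\mathrm{Hom}_\Lambda(M,\Lambda)\neq 0$: localise at $Q(\Lambda)$ and clear denominators. For the finite group $G$, the trace map $\Z[H]\to\Lambda$ (the coefficient of $e\in G$) induces an isomorphism $\mathrm{Hom}_{\Z[H]}(M,\Z[H])\cong\mathrm{Hom}_\Lambda(M,\Lambda)$. Therefore
\[
\Delta^{\rho\circ f}_{X,\phi}(t)=0 \iff \mathrm{Hom}_{\Z[H]}\bigl(H_1(\hat X),\Z[H]\bigr)\neq 0 .
\]
Since $H_1(\hat X)$ is built from $N^{ab}$, one copy for each component, the right-hand side should be identified with $H^1(N;\Z[H])^H$. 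Here $\Z[H]$ is a trivial $N$-module, and $H$ acts through conjugation and the module structure.

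\emph{Step 2 (the lifting side).} Lifts $\tilde f$ of $f\times\phi$ to $\Z[H]\rtimes H$ correspond bijectively to crossed homomorphisms $d\colon\pi\to\Z[H]$ with respect to the action through $f\times\phi$, via $\tilde f(g)=(d(g);(f\times\phi)(g))$. The image of $\tilde f$ meets $\Z[H]\times\{(e,0)\}$ nontrivially if and only if $d|_N\neq 0$. Note that $d|_N$ is an honest homomorphism $N\to\Z[H]$, and it is $H$-equivariant. Moreover, $d|_N$ is unchanged on passing to cohomology classes, because coboundaries vanish on $N$. So a nontrivial lift exists if and only if the restriction map $H^1(\pi;\Z[H])\to H^1(N;\Z[H])^H$ is nonzero.

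\emph{Step 3 (comparison; the main point).} The Lyndon--Hochschild--Serre five-term sequence reads
\[
0\to H^1(H;\Z[H])\to H^1(\pi;\Z[H])\to H^1(N;\Z[H])^H\to H^2(H;\Z[H]).
\]
I need $H^2(H;\Z[H])=0$, so that restriction is onto the invariants. Then nonzero restriction is equivalent to $H^1(N;\Z[H])^H\neq0$, which by Step~1 is equivalent to $\Delta=0$. For the vanishing I would write $\Z[H]=\Z[G]\otimes\Z[\Z]$ and use a Künneth decomposition. The factors are $H^q(G;\Z[G])$, which is $\Z$ for $q=0$ and $0$ otherwise since $G$ is finite, and $H^q(\Z;\Z[\Z])$, which is $\Z$ for $q=1$ and $0$ otherwise. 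So the total cohomology is concentrated in degree $1$.

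I expect this step to be the main obstacle. Making the Künneth argument rigorous with the non-finitely-generated coefficient module $\Z[\Z]$ needs care. I would first try Shapiro's lemma: since $G$ is finite, $\Z[G]$ is coinduced, so $H^*(G\times\Z;\Z[G\times\Z])\cong H^*(\Z;\Z[\Z])$, which vanishes in degree $2$. I would also check the identification in Step~1 carefully when $f\times\phi$ is not surjective, since then the invariants must be taken componentwise.
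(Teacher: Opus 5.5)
Your argument is correct in substance, and it takes a route different from anything in the paper. The paper does not reprove this statement; it imports it from \cite{IMS23-1}. The nearest argument it contains is the proof of Theorem~\ref{thm_52}. That proof passes to Wada's Fox-matrix definition and uses Wada's correspondence between kernel vectors of the Fox Jacobian and derivations, so vanishing becomes the existence of a nonzero derivation normalized to vanish on one generator.

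Your approach is presentation-free:
\begin{itemize}
\item you identify $\Delta=0$ with $\mathrm{Hom}_\Lambda(H_1,\Lambda)\neq 0$;
\item you use the Frobenius property of $\Lambda[G]$ to pass to equivariant maps $N^{ab}\to\Z[H]$;
\item you use inflation--restriction to extend these maps to crossed homomorphisms on $\pi$.
\end{itemize}
This explains why the nontriviality condition is exactly ``nonzero on $\ker(f\times\phi)$''. The implication from a nontrivial lift to $\Delta=0$ needs no cohomology computation at all. The cost is the vanishing of an $H^2$ group, which a fixed-presentation linear-algebra argument avoids.

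One thing must be repaired. The extension in Step~3 is $1\to N\to\pi\to I\to 1$ with $I=\mathrm{im}(f\times\phi)$, so you need $I$-invariants and $H^2(I;\Z[H])=0$, not $H^2(H;\Z[H])=0$. This is the main case, not an edge case. For a knot with $f$ onto and $\phi$ the abelianization, $I\cap(G\times\{0\})=G'\times\{0\}$, so $I=H$ only when $G$ is perfect; this fails for $S_4$ and $D_{15}$.

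Step~1 needs the matching correction. The complex $\Z[H]\otimes_{\Z[\pi]}C_*(\tilde X)$ computes the homology of $H\times_\pi\tilde X$, a disjoint union of $[H:I]$ copies of the connected space $\tilde X/N$. Hence $H_1\cong\Z[H]\otimes_{\Z[I]}N^{ab}$, and Frobenius reciprocity gives $\mathrm{Hom}_{\Z[H]}(H_1,\Z[H])\cong\mathrm{Hom}(N,\Z[H])^I$.

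Your Shapiro idea handles the required vanishing. Suppose $\phi(\pi)=c\Z$ with $c\neq 0$. Then $I$ has finite index in $H$ and contains $J=\langle (e,c|G|)\rangle\cong\Z$ with finite index. So $\Z[H]\cong\Z[I]^{[H:I]}$ as an $I$-module. Since $\Z[I]$ is (co)induced from $J$, Shapiro gives $H^2(I;\Z[I])\cong H^2(J;\Z[J])=0$. If $\phi=0$, then $I$ is finite and $\Z[H]$ is a free, hence cohomologically trivial, $\Z[I]$-module. With this change the K\"unneth discussion can be dropped.
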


In this paper, we consider a knot $K$ in the $3$-sphere $S^3$ 
and let $E_K=S^3\setminus \nu(K)$, where $\nu(K)$ denotes an open tubular neighborhood of $K$. 
We denote $\pi_1(E_K)$ by $G(K)$, 
and call it the \textit{knot group} of $K$. 
If $f\colon\G\to G$ is an epimorphism to a finite group $G$, we obtain the twisted Alexander polynomial 
$\D_{E_K,\phi}^{\rho\circ f}(t)$. 
For the \textit{abelianization} homomorphism 
$\phi \colon G(K)\to H_1(E_K;\Z)\cong\Z$, 
we drop $\phi$ from the notation and use $\D_K^{\rho\circ f}(t)$ for simplicity. 

\begin{remark}\label{rmk:torsion}
It is known that 
$\D_{K}^{\rho\circ f}(t)\not=0$ if and only if 
$H_1(E_K;\Q[G][t^{\pm1}])=H_1(E_K;\Q[t^{\pm1}]^{|G|})$ is $\Q[t^{\pm1}]$-torsion, 
namely, 
$\mathrm{rank}_\Z\,H_1(E_K;\Z[t^{\pm1}]^{|G|})$ is finite 
(see \cite[Remark~4.5]{Turaev01-1}). 
\end{remark}

%%%%%%%%%%%%%%%%%%%%%%%%%%%%%%%%%%%%%%%%%%%%%%%%%%%%%%%%%%%%%%%%%%%%%%%%%%%%%%%%%%%%%%%%%%%%%%%%%%%%%%%%%%%%%%%%%%%%

%%%%%%%%%%%%%%%%%%%%%%%%%%%%%%%%%%%%%%%%%%%%%%%%%%%%%%%%%%%%%%%%%%%%%%%%%%%%%%%%%%%%%%%%%%%%%%%%%%%%%%%%%%%%%%%%%%%%
\section{TAV groups of order the product of primes}\label{section:pqr}

In this section, we consider TAV groups whose orders are the products of at most four primes or square free. 

\subsection{TAV groups of order the product of at most four primes}

For natural numbers $u$ and $v$, we define 
\[
D_u^v =
\left\{
\begin{array}{ll} 
1 & \mbox{ if }u \mbox{ divides } v, \\
0 & \mbox{ otherwise} 
\end{array} .
\right. 
\]
Remark that $D_u^v$ is defined as $\Delta_v^u$ in \cite{DEP}. 

\begin{theorem}\label{thm_numTAVgroupsfourprimes}
Let $G$ be a TAV group whose order factorizes into at most four primes. 
Then the order of $G$ is $pqr$, $p^3q$, $p^2qr$, or $pqrs$, where $p,q,r,s$ are distinct primes. 
More precisely, we obtain the following. 
\begin{itemize}
\item[(i)] 
The number of TAV groups of order $pqr$, where $p< q < r$, is 
\[
(p-1) D_p^{q-1} D_p^{r-1} .
\]  
\item[(ii)] 
The TAV group of order $p^3 q$ is only $S_4$.%, where $p=2,q=3$. 
\item[(iii)] 
The number of TAV groups of order $p^2qr$, where $q < r$, is 
\begin{align*}
& D_p^{r - 1} D_q^{p - 1} 
+ (q - 1) D_q^{r - 1} D_q^{p - 1} 
+ (q - 1) D_p^{r - 1} D_q^{r - 1} D_q^{p - 1} \\
&+ (p - 1) D_p^{q - 1} D_p^{r - 1} 
+(p - 1) D_{p^2}^{q - 1} D_p^{r - 1} 
+ (p - 1) D_p^{q - 1} D_{p^2}^{r - 1} 
+ (p^2 - p) D_{p^2}^{q - 1} D_{p^2}^{r - 1} \\
&+ (q - 1) D_q^{p - 1} D_q^{r - 1} \\
& + D_2^q  
+ \frac{1}{2} q (q - 1 - D_2^q) D_q^{p - 1} D_q^{r - 1} 
+ \frac{1}{2} (q - 1 - D_2^q) D_q^{p + 1} D_q^{r - 1} \\
& + D_2^q D_r^{p - 1} 
+ D_2^q D_r^{p + 1} \\
&+ D_2^p D_3^q D_5^r .
\end{align*}
\item[(iv)]  
The number of TAV groups of order $pqrs$, where $p<q<r<s$, is 
\begin{align*}
&(p - 1) D_p^{q - 1} D_p^{r - 1} \\
&+ (p - 1) D_p^{q - 1} D_p^{s - 1} 
+ D_p^{q - 1} D_r^{s - 1} 
+ (p - 1) D_p^{q - 1} D_p^{s - 1} D_r^{s - 1} \\
& + D_p^{r - 1} D_q^{s - 1} 
+ D_p^{s - 1} D_q^{r - 1} 
+ (p - 1) D_p^{r - 1} D_p^{s - 1} 
+ (q - 1) D_q^{r - 1} D_q^{s - 1} \\
&+ (p - 1) D_p^{r - 1} D_p^{s - 1} D_q^{r - 1} 
+ (p - 1) D_p^{r - 1} D_p^{s - 1} D_q^{s - 1} \\
&+ (q - 1) D_q^{r - 1} D_q^{s - 1} D_p^{r - 1} 
+ (q - 1) D_q^{r - 1} D_q^{s - 1} D_p^{s - 1} \\
&+ (p q - p - q + 1) D_p^{r - 1} D_p^{s - 1} D_q^{r - 1} D_q^{s - 1} \\
&+ (p - 1)^2 D_p^{q - 1} D_p^{r - 1} D_p^{s - 1} .
\end{align*}
\end{itemize}
\end{theorem}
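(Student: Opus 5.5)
The plan is to use Theorem \ref{thm:main-4}: a finite group $G$ is a TAV group exactly when $w(G)=1$ and $G'$ is not a $p$-group. Since $G'$ must be nontrivial and not of prime power order, $|G'|$ is divisible by at least two distinct primes.

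For a finite group, $w(G)=1$ is equivalent to the abelianization $G/G'$ being cyclic. One direction is clear. For the converse, assume $G/G'$ is cyclic, generated by the image of some $g$. Any proper normal subgroup $N$ containing $g$ lies in a maximal normal subgroup $M$. Then $G/M$ is simple. If it is abelian, it is cyclic of prime order, so $G'\le M$ and hence $M=G$, a contradiction. If it is nonabelian simple, it is perfect, so $G=G'M$, which needs a further argument. To avoid this subtlety, I will simply note that every group of these orders with $|G'|$ having two prime factors is solvable, except for $A_5$ of order $60=2^2\cdot 3\cdot 5$. That exception is the term $D_2^pD_3^qD_5^r$, and $A_5$ is clearly TAV. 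For solvable $G$, I expect the nonabelian simple quotient case never to arise, so the reduction goes through.

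So the counting criterion is: $G/G'$ is cyclic and $|G'|$ has at least two prime divisors. This first gives the list of possible orders. With at most four prime factors, the orders $p^k$, $p^aq^b$ with $a+b\le 4$, and so on, must be checked.

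1. \emph{$p$-groups.} If $G$ is a $p$-group with $w(G)=1$, it is cyclic, so $G'$ is trivial.
2. \emph{Order $p^aq^b$.} A solvable group with cyclic abelianization has $G/G'$ cyclic, so $G'$ is a $\{p,q\}$-group containing both primes. The candidates are $pq$, $p^2q$, $p^2q^2$ and $p^3q$. Order $pq$ forces $G'$ of prime order. For $p^2q$ and $p^2q^2$, one checks via Sylow theory and the fact that $G'$ is nilpotent-by-something that $G'$ cannot have order divisible by both primes while $G/G'$ is nontrivial cyclic. The argument is that $G/G'$ nontrivial forces $|G'|\le |G|/p$, and a group of such small order mixing both primes would need a non-normal Sylow subgroup inside a normal subgroup, contradicting Burnside-type arguments. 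For $p^3q$, the classical classification of groups of order $24$ leaves only $S_4$ with $G'=A_4$.
3. \emph{Order $p^2q^2$.} This case needs a separate check, and I expect it to fail.
4. \emph{Remaining orders.} These are $pqr$, $p^2qr$ and $pqrs$.

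For the counting parts (i), (iii) and (iv), I will go through the explicit classifications of H\"older (square-free orders) and of Dietrich--Eick--Platt \cite{DEP} (orders $p^2qr$). Each family in those classifications is a semidirect product specified by congruence conditions, and its count is a product of $D$-factors. For each family I compute $G'$ and $G/G'$ and keep exactly those families where $G/G'$ is cyclic and $|G'|$ is divisible by at least two primes, retaining the corresponding counting term.

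For square-free orders, $G\cong \Z_m\rtimes\Z_n$ with $G'=\Z_m$ and $G/G'=\Z_n$ automatically cyclic. The condition therefore becomes $m$ composite, and the isomorphism-class count (H\"older's formula restricted to $m$ having at least two primes) yields (i) and (iv). For instance, with $pqr$ we need $m=qr$ and $n=p$, with $p$ acting nontrivially on both factors, giving $(p-1)D_p^{q-1}D_p^{r-1}$.

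The main obstacle will be (iii): running through the roughly dozen families in \cite{DEP} for order $p^2qr$, correctly computing abelianizations (including cases where the Sylow $p$-subgroup $\Z_p^2$ or $\Z_{p^2}$ acts), and matching the parameter counts such as $\tfrac12 q(q-1-D_2^q)$. I would handle this table-by-table, checking small cases like $60$ and $84$ with GAP to confirm each term.
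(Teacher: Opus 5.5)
Your reduction to the criterion ``$G/G'$ cyclic and $|G'|$ divisible by two distinct primes'' is sound. The equivalence $w(G)=1\Leftrightarrow G/G'$ cyclic actually holds for every nontrivial finite group, but your solvable-plus-$A_5$ workaround suffices. For (iii) you follow the paper's route through the Dietrich--Eick--Pan classification.

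\textbf{Main gap: eliminating the orders $p^2q$, $p^2q^2$ and $p^3q$.} What you write there (``Burnside-type arguments'', ``I expect it to fail'') is not a proof. For (ii) you never explain why orders $p^3q\neq 24$ contribute nothing. The missing idea is the existence of a normal Sylow subgroup $N$:
\begin{itemize}
\item every group of order $p^2q$ has one;
\item a group of order $p^2q^2$ with $p<q$ has a normal Sylow $q$-subgroup unless $|G|=36$, since $n_q\in\{1,p^2\}$ and $n_q=p^2$ forces $q\mid p+1$;
\item a group of order $p^3q$ has one unless $|G|=24$.
\end{itemize}
Then $G/N$ is either abelian (of order $q$ or $p^2$) or of order $p^3$. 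In the latter case its abelianization is a quotient of the cyclic group $G/G'$, so the $p$-group $G/N$ is cyclic. In every case $G'\le N$ has prime-power order. The two exceptional orders must be checked directly. Of order $24$ only $S_4$ survives. Of order $36$ none does: the only groups there without a normal Sylow $3$-subgroup are $C_3\times A_4$ and $C_2^2\rtimes C_9$, both with $G'\cong C_2^2$.

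\textbf{Second problem: the square-free counts.} The summation index $m$ in H\"older's formula is not $|G'|$. The $m$-term counts products $C_m\rtimes C_{n/m}$ in which every prime of $n/m$ acts nontrivially. Prime factors of $m$ that nothing acts on are central and lie outside $G'$; for instance, the term $m=n$ is the cyclic group. So ``H\"older's formula restricted to $m$ with at least two prime factors'' overcounts.

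For example, take $n=273=3\cdot 7\cdot 13$. The restricted formula gives $4+1=5$: the $m=91$ term also counts $(C_7\rtimes C_3)\times C_{13}$ and $(C_{13}\rtimes C_3)\times C_7$, and the $m=273$ term is the cyclic group. In fact there are only $p-1=2$ TAV groups.

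What you actually need, and what your $pqr$ computation implicitly uses, is the following. For each $m$ with at least two prime factors, count the subgroups $H\le\mathrm{Aut}(C_m)$ of order dividing $n/m$ that act nontrivially on every Sylow subgroup of $C_m$. Add the fact that different $H$ give non-isomorphic groups, because $G'$ is characteristic and $\mathrm{Aut}(C_m)$ is abelian. For $n=pqrs$ this is exactly the paper's case analysis over $m=qr,qs,rs,qrs$, which produces the terms of (iv). You assert (iv) without carrying this out.
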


\begin{proof}
Let $G$ be a group of order $n$. If $n$ is a power of a prime $p$, 
then the commutator subgroup $G'$ is also a $p$-group. 
If $n$ is the product $pq$ of two distinct primes $p$ and $q$, then $G$ is solvable, which means that the order of $G'$ is $1, p$, or $q$. 
Thus, $G$ is not a TAV group in these cases.
If $n = p^2 q$ for distinct primes $p$ and $q$, it is known (e.g., \cite[Theorem 1.31]{Isaacs08-1}) that a group $G$ of order $n$ has a normal Sylow $p$-subgroup or a normal Sylow $q$-subgroup. Let $N$ be a normal Sylow subgroup. Since $|G / N|$ equals $q^2$ or $p$, $G/N$ is abelian and hence $G' \subset N$. Thus, $G'$ has prime-power order and $G$ is not a TAV group.
Any finite group $G$ of order $p^2 q^2$, where $p<q$ (except when $n = 36$) has a normal Sylow $q$-subgroup (see \cite[1E.1]{Isaacs08-1}). 
Then, as in the case $n = p^2 q$, $G$ is not a TAV group, except for $n =36$. 
Moreover, we see any finite group of order $36$ is not a TAV group by the previous papers \cite{MS22-1}, \cite{IMS23-1}.

(i) 
Let us construct TAV groups of order $p q r$, where $p,q,r$ are distinct primes with $p < q < r$. We assume $q \equiv r \equiv 1 \;(\text{mod $p$})$ and take integers $a$ and $b$ that define elements of order $p$ in the multiplicative groups $(\mathbb{F}_q)^\times$ and $(\mathbb{F}_r)^\times$ respectively, 
where $\mathbb{F}_q$ and $\mathbb{F}_r$ are finite fields of order $q$ and $r$ respectively. 
We assume $a^p \equiv 1 \;(\text{mod $q$})$, $a \not\equiv 1 \;(\text{mod $q$})$, $b^p \equiv 1 \;(\text{mod $r$})$, and $b \not\equiv 1 \;(\text{mod $r$})$. 
The $a$-th and the $b$-th powers define automorphisms on the cyclic groups $C_q$ and $C_r$, and let $G(pqr;a,b)$ be the semidirect product $(C_q \times C_r) \rtimes C_p$ with respect to these automorphisms, i.e.,
$$G(pqr;a,b) = \langle x, y, z \mid x^q, y^r, z^p, xyx^{-1}y^{-1}, zxz^{-1}x^{-a}, zyz^{-1}y^{-b} \rangle.$$
We can easily check that $G(pqr;a, b)$ is a TAV group, because its commutator subgroup is $C_q\times C_r\cong C_{qr}$.

If $q \equiv r \equiv 1 \;(\text{mod $p$})$, there exists a TAV group $G(pqr;a,b)$ of order $pqr$ as seen above. To see the converse, recall that any group with square-free order is metacyclic (e.g., see \cite[Theorem 5.16]{Isaacs08-1}). Therefore, if $G$ is a TAV group of order $pqr$, $G/G' \cong C_{p'}$ and $G' \cong C_{q' r'} \cong C_{q'} \times C_{r'}$, where $\{ p, q, r \} = \{p', q', r' \}$. Since the actions of $C_{p'}$ on $C_{q'}$ and $C_{r'}$ are nontrivial, 
we have $q' \equiv r' \equiv 1 \;(\text{mod $p'$})$; in particular, $p' = p$ and this concludes the converse. Furthermore, this proof also shows that $G \cong G(pqr; a, b)$ for some $a$ and $b$.

Let us assume $q \equiv r \equiv 1 \;(\text{mod $p$})$. For $i = 1, \dots, p-1$ and any integers $a$ and $b$ having order $p$ in $(\mathbb{F}_q)^\times$ and $(\mathbb{F}_r)^\times$, we can define an isomorphism from $G(pqr; a^i, b^i)$ to $G(pqr; a, b)$ by
$$x \mapsto x, \quad y \mapsto y, \quad z \mapsto z^i.$$
Therefore any TAV group of order $pqr$ is isomorphic to one of
$$G(pqr; a, b), G(pqr; a, b^2), \dots, G(pqr; a, b^{p-1})$$
for fixed $a, b$.

Finally, we show that the $p - 1$ TAV groups $G(pqr; a, b^i)$ are not isomorphic. Let $\varphi\colon G(pqr; a, b^i) \to G(pqr; a, b^j)$ be an isomorphism. For any $\alpha \in (\mathbb{F}_q)^\times$ and $\beta \in (\mathbb{F}_r)^\times$,
$$x \mapsto x^\alpha, \quad y \mapsto y^\beta, \quad z \mapsto z$$
define an automorphism on $G(pqr; a, b^j)$. Therefore, by composing such an automorphism with $\varphi$, we may assume $\varphi (x) = x, \varphi(y) = y.$ Taking a conjugation by some element of $(G(pqr; a, b^j))'$, we can further assume that $\varphi(z) = z^k$ for some $k$. Since $\varphi(z) \varphi(x) \varphi(z)^{-1} \varphi(x)^{-a} = x^{a^k - a}$ and $\varphi(z) \varphi(y) \varphi(z)^{-1} \varphi(y)^{-b^i} = y^{b^{jk} - b^i}$, we have $k \equiv 1 \; (\text{mod $p$})$ and $jk \equiv i \;(\text{mod $p$})$, which concludes that $i = j$.

(ii) Any finite group $G$ of order $p^3 q$ (except when $n = 24$) 
has either a normal Sylow $p$-subgroup or a normal Sylow $q$-subgroup (see \cite[Theorem 1.32]{Isaacs08-1}). 
Then $G$ is not a TAV group, except for $n =24$. 
Moreover, we see only $S_4$ is a TAV group of order $24$ by the previous papers \cite{MS22-1}, \cite{IMS23-1}.

(iii) Dietrich, Eick, and Pan in \cite{DEP} show the explicit presentations of finite groups of order $p^2 q r$. 
Then we obtain all the following tables of TAV groups of $p^2 q r$, where $q < r$ . 
Note that there exists only one non-solvable group $A_5$ of order $p^2 q r$. 

\begin{tabular}{|l|l|}
\hline
$| \{C_{pr} \rtimes C_{pq} \} |$ & conditions \\
\hline
$1$ & $p-1 \equiv 0 \pmod q ,  r-1 \equiv 0 \pmod p ,  r-1 \not\equiv 0 \pmod q$ \\
$q-1$ & $p-1 \equiv 0 \pmod q ,  r-1 \not\equiv 0 \pmod p ,  r-1 \equiv 0 \pmod q$ \\
$2q-1$ & $p-1 \equiv 0 \pmod q ,  r-1 \equiv 0 \pmod p ,  r-1 \equiv 0 \pmod q$ \\
\hline
\end{tabular}

\begin{tabular}{|l|l|}
\hline
$| \{C_{qr} \rtimes C_{p^2} \} |$ & conditions \\
\hline
$p-1$ & $q-1 \equiv 0 \pmod p ,  q-1 \not\equiv 0 \pmod {p^2} ,  r-1 \equiv 0 \pmod p ,  r-1 \not\equiv 0 \pmod {p^2}$ \\
$2p-2$ & $q-1 \equiv 0 \pmod {p^2} ,  r-1 \equiv 0 \pmod p ,  r-1 \not\equiv 0 \pmod {p^2}$ \\
$2p-2$ & $q-1 \equiv 0 \pmod p ,  q-1 \not\equiv 0 \pmod {p^2} ,  r-1 \equiv 0 \pmod {p^2}$ \\
$p^2+2p-3$ & $q-1 \equiv 0 \pmod {p^2} ,  r-1 \equiv 0 \pmod {p^2}$ \\
\hline
\end{tabular}

\begin{tabular}{|l|l|}
\hline
$| \{C_{p^2r} \rtimes C_{q} \} |$ & conditions \\
\hline
$q-1$ & $p-1 \equiv 0 \pmod q ,  r-1 \equiv 0 \pmod q$ \\
\hline
\end{tabular}

\begin{tabular}{|l|l|}
\hline
$| \{(C_{pr} \times C_p) \rtimes C_{q} \} |$ & conditions \\
\hline
$p-1$ & $q=2$ \\
$\frac{1}{2}q (q-1)$ & $q \neq 2 , p-1 \equiv 0 \pmod q ,  r-1 \equiv 0 \pmod q$ \\
$\frac{1}{2}(q-1)$ & $q \neq 2 , p+1 \equiv 0 \pmod q ,  r-1 \equiv 0 \pmod q$ \\
\hline
\end{tabular}

\begin{tabular}{|l|l|}
\hline
$| \{((C_{p} \times C_p) \rtimes C_{r}) \rtimes C_q \} |$ & conditions \\
\hline
$1$ & $q = 2 , p-1 \equiv 0 \pmod r$ \\
$1$ & $q = 2 , p+1 \equiv 0 \pmod r$ \\
\hline
\end{tabular}

\begin{tabular}{|l|l|}
\hline
$| \{A_5 \} |$ & conditions \\
\hline
$1$ & $p = 2, q = 3 , r = 5$ \\
\hline
\end{tabular}

(iv) Finally, we discuss TAV groups of order $pqrs$, where $p<q<r<s$. 
In general, a finite group of order square-free is metacyclic and then can be obtained by 
\[
(C_{p_1} \times  C_{p_2} \times \cdots \times C_{p_n}) \rtimes H
\]
where $H$ is a subgroup of ${\rm Aut} (C_{p_1} \times  C_{p_2} \times \cdots \times C_{p_n})$. 
Moreover, different subgroups of ${\rm Aut} (C_{p_1} \times  C_{p_2} \times \cdots \times C_{p_n})$ give rise to distinct finite groups 
(c.f. \cite[Section 21.3]{BNV}, \cite[Theorem 9.4.3]{Hall}). 
Since the commutator subgroup of a TAV group is not a $p$-group and finite simple groups of order square-free are abelian, 
the commutator subgroup of a TAV group is $C_a \times C_b \simeq C_{ab}$ or $C_a \times C_b \times C_c \simeq C_{abc}$, where $a,b,c \in \{p,q,r,s\}$.  
Then all TAV groups $G$ of order $pqrs$ are expressed as 
\[
G \simeq (C_a \times C_b) \rtimes H \mbox{ or } (C_a \times C_b \times C_c) \rtimes H,
\]  
where $H$ is  a subgroup of ${\rm Aut}(C_a \times C_b)$ or ${\rm Aut}(C_a \times C_b \times C_c)$ respectively. 
%and the order of $H$ is $pqrs/ab$ or $pqrs/abc$ respectively.  
If $H$ acts trivially on each factor of $C_a \times C_b$ (respectively $C_a \times C_b \times C_c$), 
then the commutator subgroup is not $C_a \times C_b$ (respectively $C_a \times C_b \times C_c$). 

Case (1): $G' = C_{qr}$ and $G/G' = C_{ps}$. \\
Since $H$ is a subgroup of ${\rm Aut}(C_q \times C_r)  \simeq C_{q-1} \times C_{r-1}$ and $q, r < s$, 
the order of $H$ is $p$ and $q-1 \equiv 0 \pmod p$, $r-1 \equiv 0 \pmod p$. 
Moreover, $H$ acts nontrivially on both $C_q$ and $C_r$. 
Then $H (< C_{q-1} \times C_{r-1} \simeq {\mathbb Z}/(q-1) {\mathbb Z} \times {\mathbb Z}/(r-1) {\mathbb Z})$ is expressed as   
\[
\left\langle \left( \frac{q-1}{p}, \frac{r-1}{p} \right) \right\rangle, 
\left\langle \left( \frac{q-1}{p}, 2 \frac{r-1}{p} \right) \right\rangle, 
\ldots  , 
\left\langle \left( \frac{q-1}{p}, (p-1) \frac{r-1}{p} \right) \right\rangle .
\]
Therefore there are $p-1$ TAV groups in this case, which are 
\[
G = ((C_q \times C_r) \rtimes C_p ) \times C_s \simeq C_{qr} \rtimes C_{ps}. 
\]

Case (2): $G' = C_{qs}$ and $G/G' = C_{pr}$. \\
If $q-1 \equiv 0 \pmod p,  s-1 \equiv 0 \pmod r$, then $H < {\rm Aut}(C_q \times C_s)$ of order $pr$ is expressed as   
\[
\left\langle \left( \frac{q-1}{p}, \frac{s-1}{r} \right) \right\rangle 
\]
and the TAV group is 
\[
G = (C_q \rtimes C_p) \times (C_s \rtimes C_r ) \simeq C_{qs} \rtimes C_{pr}. 
\]
If $q-1 \equiv 0 \pmod p,  s-1 \equiv 0 \pmod p$, then $H < {\rm Aut}(C_q \times C_s)$ of order $p$, 
which is similar to Case (1), is expressed as   
\[
\left\langle \left( \frac{q-1}{p}, \frac{s-1}{p} \right) \right\rangle, 
\left\langle \left( \frac{q-1}{p}, 2 \frac{s-1}{p} \right) \right\rangle, 
\ldots  , 
\left\langle \left( \frac{q-1}{p}, (p-1) \frac{s-1}{p} \right) \right\rangle .
\]
and the TAV groups are  
\[
G = ((C_q \times C_s) \rtimes C_p ) \times C_r \simeq C_{qs} \rtimes C_{pr}. 
\]
Furthermore, if $q-1 \equiv 0 \pmod p ,  s-1 \equiv 0 \pmod p ,  s-1 \equiv 0 \pmod r$, 
$H$ of order $p$ or $pr$ is expressed as 
\begin{align*}
& \left\langle \left( \frac{q-1}{p}, \frac{s-1}{r} \right) \right\rangle , \\
& \left\langle \left( \frac{q-1}{p}, \frac{s-1}{p} \right) \right\rangle, 
\left\langle \left( \frac{q-1}{p}, 2 \frac{s-1}{p} \right) \right\rangle, 
\ldots  , 
\left\langle \left( \frac{q-1}{p}, (p-1) \frac{s-1}{p} \right) \right\rangle, \\
&\left\langle \left( \frac{q-1}{p}, \frac{s-1}{pr} \right) \right\rangle, 
\left\langle \left( \frac{q-1}{p}, 2 \frac{s-1}{pr} \right) \right\rangle, 
\ldots  , 
\left\langle \left( \frac{q-1}{p}, (p-1) \frac{s-1}{pr} \right) \right\rangle .
\end{align*}
It is easy to see that the above subgroups are mutually different. 
In the last $p-1$ cases,  the TAV groups are 
\[
G = (C_q \times C_s) \rtimes (C_p \times C_r) \simeq C_{qs} \rtimes C_{pr}. 
\]

Case (3): $G' = C_{rs}$ and $G/G' = C_{pq}$. \\
We see only the case $r-1 \equiv 0 \pmod p ,  s-1 \equiv 0 \pmod p , r-1 \equiv 0 \pmod q ,  s-1 \equiv 0 \pmod q $. 
The other cases are the same as we mentioned in the above cases. 
In this case, the order of a subgroup $H$ of ${\rm Aut}(C_r \times C_s)$ is $p$, $q$, or $pq$. 
If the order of $H$ is $p$ or $q$, there are $p-1$ or $q-1$ TAV groups 
\[
G = ((C_r \times C_s) \rtimes C_p ) \times C_q \simeq C_{rs} \rtimes C_{pq} \mbox{ or } ((C_r \times C_s) \rtimes C_q ) \times C_p \simeq C_{rs} \rtimes C_{pq} 
\]
respectively, which are similar to Case(1), (2). 
If the order of $H$ is $pq$,  $H < {\rm Aut}(C_r \times C_s)$ is expressed as 
\[
\left\langle \left( k \frac{r-1}{pq}, \frac{s-1}{pq} \right) \right\rangle, 
\left\langle \left( \frac{r-1}{p}, l \frac{s-1}{pq} \right) \right\rangle, 
\left\langle \left( \frac{r-1}{p}, \frac{s-1}{q} \right) \right\rangle, 
\left\langle \left( \frac{r-1}{q}, \frac{s-1}{p} \right) \right\rangle .
\]
where $1 \leq k,l \leq pq - 1$. However, if $kl \equiv 1 \pmod {pq}$, then 
\[
\left\langle \left( k \frac{r-1}{pq}, \frac{s-1}{pq} \right) \right\rangle =  
\left\langle \left( \frac{r-1}{p}, l \frac{s-1}{pq} \right) \right\rangle. 
\]
Then the number of subgroups $H$ of order $pq$ is 
\[
(pq-1) + (pq-1) + 2 - (p-1)(q-1) = pq + p +q-1.
\]
Therefore we have $ (p-1) + (q-1) + (pq+p+q-1) =pq + 2 p + 2q -3$ TAV groups in this case. 

Case (4): $G' = C_{qrs}$ and $G/G' = C_{p}$. \\
Since $H$ is a subgroup of ${\rm Aut}(C_q \times C_r \times C_s) \simeq C_{q-1} \times C_{r-1} \times C_{s-1}$, 
the order of $H$ is $p$ and $q-1 \equiv 0 \pmod p$, $r-1 \equiv 0 \pmod p$, $s-1 \equiv 0 \pmod p$. 
Then $H (< C_{q-1} \times C_{r-1} \times C_{s-1})$ is expressed as   
\[
\left\langle \left( \frac{q-1}{p}, k \frac{r-1}{p}, l \frac{s-1}{p} \right) \right\rangle 
\]
where $1 \leq k,l \leq p - 1$.
Therefore, in this case we have $(p-1)^2$ TAV groups 
\[
G = (C_q \times C_r \times C_s ) \rtimes C_p \simeq C_{qrs} \rtimes C_p. 
\]
As a consequence, we have the following tables and this completes the proof. 

\begin{tabular}{|l|l|}
\hline
$| \{C_{qr} \rtimes C_{ps} \} |$ & conditions \\
\hline
$p-1$ & $q-1 \equiv 0 \pmod p ,  r-1 \equiv 0 \pmod p$ \\
\hline
\end{tabular}

\begin{tabular}{|l|l|}
\hline
$| \{C_{qs} \rtimes C_{pr} \} |$ & conditions \\
\hline
$1$ & $q-1 \equiv 0 \pmod p ,  s-1 \not\equiv 0 \pmod p ,  s-1 \equiv 0 \pmod r$ \\
$p-1$ & $q-1 \equiv 0 \pmod p ,  s-1 \equiv 0 \pmod p ,  s-1 \not\equiv 0 \pmod r$ \\
$2p-1$ & $q-1 \equiv 0 \pmod p ,  s-1 \equiv 0 \pmod p ,  s-1 \equiv 0 \pmod r$ \\
\hline
\end{tabular}

\begin{tabular}{|l|l|}
\hline
$| \{C_{rs} \rtimes C_{pq} \} |$ & conditions \\
\hline
$1$ & $r-1 \equiv 0 \pmod p ,  s-1 \not\equiv 0 \pmod p , r-1 \not\equiv 0 \pmod q ,  s-1 \equiv 0 \pmod q $ \\
$1$ & $r-1 \not\equiv 0 \pmod p ,  s-1 \equiv 0 \pmod p , r-1 \equiv 0 \pmod q ,  s-1 \not\equiv 0 \pmod q $ \\
$p-1$ & $r-1 \equiv 0 \pmod p ,  s-1 \equiv 0 \pmod p , r-1 \not\equiv 0 \pmod q ,  s-1 \not\equiv 0 \pmod q $ \\
$q-1$ & $r-1 \not\equiv 0 \pmod p ,  s-1 \not\equiv 0 \pmod p , r-1 \equiv 0 \pmod q ,  s-1 \equiv 0 \pmod q $ \\
$2p-1$ & $r-1 \equiv 0 \pmod p ,  s-1 \equiv 0 \pmod p , r-1 \equiv 0 \pmod q ,  s-1 \not\equiv 0 \pmod q $ \\
$2p-1$ & $r-1 \equiv 0 \pmod p ,  s-1 \equiv 0 \pmod p , r-1 \not\equiv 0 \pmod q ,  s-1 \equiv 0 \pmod q $ \\
$2q-1$ & $r-1 \equiv 0 \pmod p ,  s-1 \not\equiv 0 \pmod p , r-1 \equiv 0 \pmod q ,  s-1 \equiv 0 \pmod q $ \\
$2q-1$ & $r-1 \not\equiv 0 \pmod p ,  s-1 \equiv 0 \pmod p , r-1 \equiv 0 \pmod q ,  s-1 \equiv 0 \pmod q $ \\
$pq+2p+2q-3$ & $r-1 \equiv 0 \pmod p ,  s-1 \equiv 0 \pmod p , r-1 \equiv 0 \pmod q ,  s-1 \equiv 0 \pmod q $ \\
\hline
\end{tabular}

\begin{tabular}{|l|l|}
\hline
$| \{C_{qrs} \rtimes C_{p} \} |$ & conditions \\
\hline
$(p-1)^2$ & $q-1 \equiv 0 \pmod p ,  r-1 \equiv 0 \pmod p ,  s-1 \equiv 0 \pmod p$ \\
\hline
\end{tabular}

\end{proof}

\begin{example}\label{exam:60210}
Firstly, we consider finite groups of order $60 = 2^2 \cdot 3 \cdot 5$. 
In this case, there are three TAV groups  ${\rm Dic}_{15}$, $A_5$, $C_3 \rtimes F_5$.  
Since their commutator subgroups are $C_{15}, A_5, C_{15}$ respectively and they are not $p$-groups. 
We have a criterion to determine whether a finite group is a seed or not, by \cite{IMS25-1}. 
Namely, a finite group $G$ of weight one is not a seed if and only if there exists a nontrivial cyclic group $C$ in the center of $G$ such that $C \cap G' = \{e\}$. 
Then $A_5$ and $C_3 \rtimes F_5$ are seeds, since their centers are trivial.  
On the other hand, the dicyclic group ${\rm Dic}_{15}$ is not a seed, 
since the center is $C_2$ which is not contained in $[{\rm Dic}_{15}, {\rm Dic}_{15}] = C_{15}$. 
It follows that if ${\rm Dic}_{15}$ induces the twisted Alexander polynomial of a knot $K$ to be zero, 
then so does the dihedral group $D_{15}$ (cf. \cite{IMS25-1}). Therefore the TAV order of $K$ is smaller than or equal to $30$. 

Secondly, we consider finite groups of order $210 = 2 \cdot 3 \cdot 5 \cdot 7$. 
In this case, there are six TAV groups 
$D_5 \times (C_7 \rtimes C_3)$, $C_5 \rtimes F_7$, $C_3 \times D_{35}$, 
$C_5 \times D_{21}$, $C_7 \times C_{15}$, $D_{105}$. 
Their commutator subgroups are $C_{35}, C_{35}, C_{35}, C_{21}, C_{15}, C_{105}$ respectively. 
%They are also described as $C_{35} \rtimes C_{6}$, $C_{35} \rtimes C_{6}$, $C_{35} \rtimes C_{6}$, 
%$C_{15} \rtimes C_{14}, C_{21} \rtimes C_{10}, C_{105} \rtimes C_{2}$ respectively. 
The centers of $D_5 \times (C_7 \rtimes C_3)$, $C_5 \rtimes F_7$, $D_{105}$ are trivial, then they are seeds. 
While the others are not seeds. 
\end{example}

\subsection{TAV groups of order square free}

In general, we can determine the number of TAV groups of order square free. 
For $m \in {\mathbb N}$, let $\pi (m)$ be the set of primes dividing $m$. 
We define some complementary functions as follows. 
For $m \in {\mathbb N}$ and a prime $p$, 
\begin{align*}
c_m(p) &= | \{ q \in \pi (m) \, | \, q - 1 \equiv 0 \pmod p \}| , \\
\bar{c}_m(p) &= | \{ q \in \pi (m) \, | \, p - 1 \equiv 0 \pmod q \}| .  
\end{align*}
H\"{o}lder in \cite{Holder} determined the number of finite groups of order square free. 

\begin{theorem}[H\"{o}lder \cite{Holder}]\label{holderthm}
Let $n \in {\mathbb N}$ be square free. 
The number of groups of order $n$ is
\[
\sum_{m|n} \, \prod_{p \in \pi(\frac{n}{m})} \frac{p^{c_m(p)} -1}{p-1} .
\]  
\end{theorem}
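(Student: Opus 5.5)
The approach is to classify groups of square-free order $n$ by their commutator subgroup, and then count the admissible actions. We will use that every group $G$ of square-free order is metacyclic (\cite[Theorem 5.16]{Isaacs08-1}). More precisely, $G' \cong C_m$ is cyclic of some order $m \mid n$, and $G/G' \cong C_{n/m}$ is cyclic. Since $\gcd(m, n/m)=1$, the Schur--Zassenhaus theorem gives a complement, so $G \cong C_m \rtimes C_{n/m}$. The action $\theta\colon C_{n/m}\to \mathrm{Aut}(C_m)$ is a homomorphism, and
\[
\mathrm{Aut}(C_m)\cong \prod_{q\in\pi(m)} C_{q-1}.
\]

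The first step is to characterize which semidirect products $C_m \rtimes_\theta C_{n/m}$ actually have commutator subgroup equal to $C_m$. The commutator subgroup of such a product is generated by the elements $x^{\theta(z)-1}$. It equals $C_m$ exactly when, for every $q\in\pi(m)$, the induced action on the factor $C_q$ is nontrivial. So for each $m\mid n$ we must count groups $C_m\rtimes C_{n/m}$ whose action on every $C_q$, $q\in\pi(m)$, is nontrivial.

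The second step counts these groups up to isomorphism. The key claim, as in part (i) of Theorem \ref{thm_numTAVgroupsfourprimes} and \cite[Section 21.3]{BNV}, \cite[Theorem 9.4.3]{Hall}, is that for fixed $m$ the isomorphism classes correspond bijectively to subgroups $H=\theta(C_{n/m})\le \mathrm{Aut}(C_m)$ such that:
\begin{itemize}
\item[(a)] the order of $H$ is coprime to $m$ (automatic here, since $|H|$ divides $n/m$), and
\item[(b)] $H$ acts nontrivially on each $C_q$, $q \in \pi(m)$.
\end{itemize}
The argument for the bijection has three parts.
\begin{itemize}
\item[(1)] Re-choosing a generator of the complement changes $\theta$ by an automorphism of $C_{n/m}$. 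This does not change $H$.
\item[(2)] Automorphisms of $C_m$ act trivially on the abelian group $\mathrm{Aut}(C_m)$ by conjugation. By Schur--Zassenhaus all complements are conjugate, so an isomorphism between two such groups preserves $G'=C_m$ and carries the image $H$ to itself.
\item[(3)] The part of $C_{n/m}$ in the kernel of $\theta$ is central and is recovered from $n/m$ and $|H|$. Hence $H$ alone determines $G$.
\end{itemize}

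The third step is the count itself. Write $H=\prod_{p\in\pi(n/m)} H_p$ with $H_p$ of order $1$ or $p$ inside $\prod_{q\in\pi(m)} C_{q-1}$. Fix a prime $p\in\pi(n/m)$. The $p$-torsion of $\prod_{q \in \pi(m)} C_{q-1}$ is $\mathbb{F}_p^{c_m(p)}$. The number of subgroups of order at most $p$ in it is
\[
1+\frac{p^{c_m(p)}-1}{p-1}.
\]
Summing over all possible $H$, without condition (b), then gives the formula in Theorem \ref{holderthm}. To check that every divisor $m$ is counted exactly once, we use the convention that distinct $m$ give distinct $G'$, so each group is counted once via $m=|G'|$. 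The factor per prime should then be $\frac{p^{c_m(p)}-1}{p-1}$ plus a trivial choice. Reconciling this with H\"older's formula needs inclusion–exclusion over the condition that the action is nontrivial on every $C_q$. Alternatively, we can reindex the sum so that $m$ runs over the primes on which the action may be trivial.

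I expect this reconciliation to be the main obstacle: checking that the product formula indexed by $m$ in Theorem \ref{holderthm} matches exactly the subgroups $H$ satisfying the nontriviality condition, after the trivially acted-on primes are absorbed into the cyclic complement. I would first try to show that $\sum_{m}$ in H\"older's formula runs over $m=|G'|$, with $\frac{p^{c_m(p)}-1}{p-1}$ counting the nontrivial $H_p$. The subtlety is that nontriviality on each $C_q$ is a joint condition across the different $p$, and this has to be handled carefully.
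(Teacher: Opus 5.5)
\textbf{There is a genuine gap: you index the sum by $m=|G'|$, which is not the $m$ in H\"older's formula, so the final count never reaches it.} The paper gives no proof of this statement; it is quoted from H\"older. Your proposal therefore has to stand on its own, and it stops exactly where the work is. Your Steps 1--2 are a correct classification: isomorphism classes of groups with $|G'|=m$ correspond to subgroups $H\le\mathrm{Aut}(C_m)$ with $|H|$ dividing $n/m$ and satisfying (b).

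The failure is in Step 3, and it shows up term by term already for $n=6$:
\begin{itemize}
\item H\"older's terms for $m=1,2,3,6$ are $0,0,1,1$. For example, the $m=1$ term is $\prod_{p\in\{2,3\}}\frac{p^{0}-1}{p-1}=0$, and $S_3$, $C_6$ sit at $m=3$ and $m=6$.
\item Your $|G'|$-indexed counts are $1,0,1,0$, with $C_6$ at $m=1$.
\end{itemize}
The reason is structural. With $m=|G'|$, a Sylow $p$-subgroup of the complement may act trivially and produce a central factor $C_p$, so $H_p=1$ must be allowed. Meanwhile condition (b) appears nowhere in H\"older's product. So the identification you say you would try first, with $\frac{p^{c_m(p)}-1}{p-1}$ counting nontrivial $H_p$ and $m=|G'|$, is false. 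The unrestricted count $\sum_m\prod_p\bigl(1+\frac{p^{c_m(p)}-1}{p-1}\bigr)$ from Step 3 is not the formula either; it gives $5$ for $n=6$. Inclusion--exclusion over (b) is not the natural fix, because the sum has to be reindexed, not corrected.

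The missing idea is to absorb the kernel of the action into the cyclic normal factor.
\begin{itemize}
\item Let $Z$ be the kernel of $C_{n/|G'|}\to\mathrm{Aut}(C_{|G'|})$. Then $Z$ is central and has order prime to $|G'|$. Hence $G\cong C_m\rtimes C_{n/m}$ with $C_m=G'\times Z$ cyclic, $m=|G'|\,|Z|$, and the new complement acting faithfully.
\item This $C_m$ is the Fitting subgroup of $G$: a nontrivial complement element lying in the abelian group $F(G)$ would centralize $C_m$, contradicting faithfulness. So $m$ is an isomorphism invariant.
\item Your Step 2 argument (conjugacy of complements and $\mathrm{Aut}(C_m)$ abelian) then shows that, for fixed $m$, isomorphism classes correspond to subgroups $H\le\mathrm{Aut}(C_m)$ of order exactly $n/m$. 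No condition like (b) is needed.
\item Since $n/m$ is square-free and prime to $m$, we have $H=\prod_{p\mid n/m}H_p$ with $|H_p|=p$, each $H_p$ a line in the $p$-torsion $\mathbb{F}_p^{c_m(p)}$ of $\prod_{q\mid m}C_{q-1}$. This gives exactly $\prod_{p\in\pi(n/m)}\frac{p^{c_m(p)}-1}{p-1}$ choices.
\end{itemize}
Equivalently, send your pair $(|G'|,H)$ to $(n/|H|,H)$, with $H$ acting trivially on the added cyclic factor. This is a bijection onto H\"older's pairs, and it is precisely the reindexing your proposal leaves undone.
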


By using this result, we determine the number of TAV groups of order square free. 

\begin{theorem}\label{numtavgroupsquarefree}
Let $n \in {\mathbb N}$ be square free. 
The number of TAV groups of order $n$ is
\[
\sum_{m|n} \, \prod_{p \in \pi(\frac{n}{m})} \frac{p^{c_m(p)} -1}{p-1} 
- \sum_{p \in \pi (n)} (2^{\bar{c}_n(p)} -1)
- 1.
\]  
\end{theorem}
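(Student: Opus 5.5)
The plan is to subtract from H\"older's count (Theorem \ref{holderthm}) the number of groups of square-free order $n$ that are \emph{not} TAV groups. To identify those, I will use Theorem \ref{thm:main-4}.

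First I dispose of the weight condition. A group $G$ of square-free order is metacyclic, hence solvable, and $G/G'$ is cyclic. For finite solvable $G$, if $N \trianglelefteq G$ satisfies $NG' = G$, then $G/N$ is a perfect solvable group, so $N=G$. Hence any lift of a generator of $G/G'$ is a weight element, and $w(G)\le 1$ always. For $n>1$, the only way to have $w(G)=0$ is $G=\{e\}$, which is excluded. So, by Theorem \ref{thm:main-4}, $G$ fails to be a TAV group exactly when $G'$ is a $p$-group. Since $|G'|$ is square-free, this means $G'$ is trivial or cyclic of prime order.

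Next I count these exceptional groups.
\begin{itemize}
\item $G'=\{e\}$: then $G$ is abelian of square-free order, hence $G\cong C_n$. This gives exactly one group, which accounts for the final $-1$.
\item $G'\cong C_p$ for a prime $p\mid n$: then $G\cong C_p\rtimes C_{n/p}$ via a homomorphism $C_{n/p}\to \mathrm{Aut}(C_p)\cong C_{p-1}$ with nontrivial image. The isomorphism type depends only on the image, because the complement is cyclic and any two generators of the same image subgroup can be exchanged by an automorphism of $C_{n/p}$. The image is a subgroup of the cyclic group $C_{p-1}$, so it is determined by its order $d$, a square-free divisor of $\gcd(n/p,p-1)$ with $d>1$. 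Conversely, each such $d$ yields a group with $G'=C_p$, since the action on $C_p$ is nontrivial and the complement is abelian.
\end{itemize}
The primes available for $d$ are exactly the $q\in\pi(n)$ with $q\mid p-1$; such $q$ automatically satisfy $q\neq p$. Their number is $\bar c_n(p)$, so there are $2^{\bar c_n(p)}-1$ choices of $d$, one for each nonempty set of such primes. I also need that groups arising from different $p$, or different $d$, are pairwise non-isomorphic. This follows because $p=|G'|$ is an invariant, and $d$ is recovered as $|G/C_G(G')|$.

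Finally, subtracting these counts from Theorem \ref{holderthm} gives the stated formula.

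The main obstacle is making sure H\"older's sum counts every group of order $n$ exactly once, including those with central direct factors, so that subtracting the exceptional groups is legitimate. If the paper only cites the total, nothing more is needed. Still, I would check this against the standard parametrization $G\cong C_m\rtimes C_{n/m}$ with $Z(G)\le C_m$, in which each prime of $n/m$ acts nontrivially on $C_m$. As a sanity check, for $n=30$ the formula gives $4-(2^1-1)-(2^1-1)-1=1$, matching $D_{15}$.
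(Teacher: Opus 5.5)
Your proposal is correct and follows the paper's proof. You subtract from H\"older's count the cyclic group $C_n$ and, for each prime $p \mid n$, the $2^{\bar{c}_n(p)}-1$ groups $C_p \rtimes C_{n/p}$, one for each nontrivial image of the action in $\mathrm{Aut}(C_p)\cong C_{p-1}$, and you are more careful than the paper: it leaves implicit both that every nontrivial group of square-free order has weight one (your solvability argument) and that the exceptional groups are pairwise non-isomorphic (your invariants $|G'|$ and $|G/C_G(G')|$).
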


\begin{proof}
We count finite groups $G$ of order $n$ whose commutator subgroups are trivial or cyclic groups of prime order, 
since $G$ is not a TAV group in these cases. 
If the commutator subgroup of $G$ is trivial, then $G$ is a cyclic group $C_n$. 
If the commutator subgroup of $G$ is a cyclic group of order prime $p$, then $G$ is expressed as 
$C_p \rtimes H$, where $H$ is a subgroup of ${\rm Aut} (C_p) \simeq C_{p-1}$. 
We denote $\bar{c}_n(p)$ by $k$, namely,  
primes $q_1, q_2, \ldots, q_k \in \pi(n)$ divide $p-1$.  
Then $H (< C_{p-1} \simeq {\mathbb Z}/(p-1) {\mathbb Z})$ is a cyclic group generated by 
\[
\frac{p-1}{{q_1}^{e_1} {q_2}^{e_2} \cdots {q_k}^{e_k}} ,
\]
where $e_1, e_2,\ldots , e_k = 0$ or $1$. 
When $e_1 = e_2 = \cdots = e_k = 0$, $H$ is trivial and the commutator subgroup is not $C_p$. 
Therefore there exist $2^k - 1$ nontrivial subgroups of ${\rm Aut} (C_p)$ and the commutator subgroups are $C_p$.
\end{proof}

\begin{example}
By Theorem \ref{holderthm}, there are $144$ finite groups of order $30030 = 2 \cdot 3 \cdot 5 \cdot 7 \cdot 11 \cdot 13$. 
In order to apply Theorem \ref{numtavgroupsquarefree}, we compute $\bar{c}_{30030}(p)$ for $p \in \pi (30030) = \{2,3,5,7,11,13\}$:  
\begin{align*}
&\bar{c}_{30030}(2)  = |\{\}|= 0, \, 
\bar{c}_{30030}(3) = |\{2\}| = 1, \, 
\bar{c}_{30030}(5) = |\{2\}| = 1, \\
&\bar{c}_{30030}(7) = |\{2,3\}|= 2, \, 
\bar{c}_{30030}(11) = |\{2,5\}| = 2, \, 
\bar{c}_{30030}(13) = |\{2,3\}| = 2. 
\end{align*}
Then the number of TAV groups of order $30030$ is 
\[
144 - (0 + 1 + 1 + 3 + 3 +3 ) - 1 = 132.
\] 

In fact, we can find the non-TAV groups of order $30030$ as follows.
A cyclic group $C_{30030}$ is not a TAV group. 
If a finite group can be expressed as $C_{13} \rtimes H$ 
whose commutator subgroup is $C_{13}$, 
$H$ is a subgroup of ${\rm Aut} (C_{13}) \simeq C_{12} \simeq {\mathbb Z}/12 {\mathbb Z}$ and 
is generated by $\frac{12}{2},\frac{12}{3},\frac{12}{2 \cdot 3}$. 
Then we obtain finite groups of order $30030$  
\[
(C_{13} \rtimes C_2) \times C_{1155}, 
(C_{13} \rtimes C_3) \times C_{770}, 
(C_{13} \rtimes C_6) \times C_{385} 
\]
respectively, which are not TAV groups, 
since their commutator subgroups are $C_{13}$. 
Similarly, we have non-TAV groups of order $30030$  
\begin{align*}
&(C_{11} \rtimes C_2) \times C_{1365}, 
(C_{11} \rtimes C_5) \times C_{546}, 
(C_{11} \rtimes C_{10}) \times C_{273}, \\
&(C_{7} \rtimes C_2) \times C_{2145}, 
(C_{7} \rtimes C_3) \times C_{1430}, 
(C_{7} \rtimes C_6) \times C_{715}, \\
&(C_{5} \rtimes C_2) \times C_{3003}, 
(C_{3} \rtimes C_2) \times C_{5005}.
\end{align*}
Therefore the remaining $132$ finite groups of order $30030$ are TAV groups. 
\end{example}

\section{Knots admitting a given TAV group}\label{sec:4}

In this section, we discuss knots whose twisted Alexander polynomials for a given TAV group is zero. 
Firstly, we explain how to construct knots admitting a given TAV group. 
Secondly, we show the existence of infinitely many such hyperbolic non-fibered knots for a given TAV group. 
Thirdly, we give a concrete example of a knot admitting a TAV group of order product of three distinct primes.

\subsection{Knots admitting a given TAV group}\label{subsec:3.0}
Let $G$ be a finite group with $w(G) = 1$ and assume that $G'$ is not a $p$-group. By Theorem \ref{thm:main-4} (\cite[Theorem~1.5]{IMS23-1}), $G$ is a TAV group of some knot. We can theoretically find such a knot by tracking the proof given in \cite{IMS23-1}, but it seems an impractically hard work and the resultant knot would be too complicated to examine. In this subsection, we give a more practical recipe to construct a knot admitting a given TAV group.

In order to explain the procedure, we recall the satellite construction of knots following \cite{CS16-1}. Let $K$ be a knot and $\alpha$ an unknotted loop disjoint from $K$. For another knot $J$, we glue the exteriors $E_\alpha$ of $\alpha$ and $E_J$ of $J$ by an orientation-reversing homeomorphism between the boundary tori so that a meridian and a preferred longitude of $\alpha$ are respectively identified with a preferred longitude and a meridian of $J$. Since the obtained $3$-manifold is homeomorphic to $S^3$, we can regard $K \subset E_\alpha$ as a new knot in $S^3$, which we denote by $K(\alpha, J)$. See Figure \ref{fig:satellite} in Subsection \ref{subsec:4.3} for an example, where $K= T(2, 15)$ and $J = T(3,5)$. 

Let $i \colon E_{K \cup \alpha} \to E_K$ and $j \colon E_{K \cup \alpha} \to E_{K(\alpha, J)}$ be the inclusion maps, which induce group homomorphisms $i_* \colon G(K \cup \alpha) \to G(K)$ and $j_* \colon G(K \cup \alpha) \to G(K(\alpha, J))$, where $E_{K\cup\alpha}=S^3\setminus\nu(K\cup\alpha)$ and $G(K\cup\alpha)=\pi_1(E_{K\cup\alpha})$. By \cite[Lemma 4.1]{CS16-1}, there exists a group homomorphism $\psi: G(K(\alpha, J)) \to G(K)$ such that $i_* = \psi \circ j_*$: $\psi$ is constructed by gluing the inclusion homomorphism $i_* \colon G(K \cup \alpha) \to G(K)$ and $\alpha_* \circ \phi_J \colon G(J) \to G(K)$, where $\phi_J \colon G(J) \to \mathbb{Z}$ is the abelianization map and $\alpha_* \colon \mathbb{Z} \to G(K)$ is the group homomorphism that sends $1 \in \mathbb{Z}$ to the element represented by the loop $\alpha$.

Let us see how to construct a knot admitting a given TAV group. Let $G$ be any TAV group. Since $G'$ has non-prime-power order, by \cite[Lemma~1]{gru}, $G'$ contains at least one of (i) a cyclic group of non-prime-power order or (ii) a non-abelian group of weight one; 
%$1$; 
take such a subgroup and denote it by $H$.

Since $w(G) = 1$, there exists a knot $K$ admitting an epimorphism $f_0\colon G(K) \to G$ onto $G$. Since $H$ is also of weight one, 
%$1$, 
we can take a loop $\alpha \subset E_K$ that bounds a disc in $S^3$ such that ${\rm lk}(K, \alpha) = 0$ and $f_0(\alpha)$ is contained in $H$ and normally generates $H$ (in $H$). In the case (i), we take a knot $J$ with $\Delta_J(e^{2 \pi i/ |H|}) = 0$ and define $f = f_0 \circ \psi \colon G(K(\alpha, J)) \to G$. In the case (ii), we take a knot $J$ that admits an epimorphism $f_J \colon G(J) \to H$ such that $f_J(m) = f_0(\alpha)$ and $f_J(\ell) = e$, where $(m, \ell)$ is a meridian-longitude pair of $J$, and then obtain a homomorphism $f \colon G(K(\alpha, J)) \to G$ by gluing $f_0$ and $f_J$.

\begin{theorem}\label{const-thm}
We have $\Delta_{K(\alpha, J)}^{\rho \circ f}(t) = 0$. In particular, $G$ is a TAV group of the knot $K(\alpha, J)$.
\end{theorem}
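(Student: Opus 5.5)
Since $f$ is surjective (it restricts on $G(K\cup\alpha)$ to $f_0\circ i_*$, and $i_*$ is onto because every meridian of $K$ lives in $E_{K\cup\alpha}$), the second claim follows from the first, so it suffices to show $\Delta_{K(\alpha,J)}^{\rho\circ f}(t)=0$. By Remark \ref{rmk:torsion}, this amounts to showing that $H_1(E_{K(\alpha,J)};\Q[G][t^{\pm1}])$ has positive rank over $\Q[t^{\pm1}]$. We compute this rank with a Mayer--Vietoris sequence for the decomposition
$$E_{K(\alpha,J)}=E_{K\cup\alpha}\cup_{T} E_J,$$
where $T$ is the gluing torus. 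Because ${\rm lk}(K,\alpha)=0$, the loop $\alpha$ and the whole of $E_J$ map to $0$ under the abelianization $\phi$ of $G(K(\alpha,J))$. Hence on $E_J$ the coefficient system is $\Q[G]\otimes\Q[t^{\pm1}]$ with $t$ acting trivially through $G(J)$, and the $G(J)$-action factors through $f|_{G(J)}$. So $H_*(E_J;\Q[G][t^{\pm1}])\cong H_*(E_J;\Q[G])\otimes\Q[t^{\pm1}]$, where $\Q[G]$ is the restricted module, and likewise on $T$.

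Next I would decompose $\Q[G]$ as a $G(J)$-module. In case (i), $G(J)$ acts through $\langle f_0(\alpha)\rangle\cong C_{|H|}$, or through a subgroup containing a generator of order $|H|$. Restricted to this cyclic group, $\Q[G]$ is a sum of copies of $\Q[C_{|H|}]$, which contains the summand $\Q(\zeta_{|H|})$ on which the meridian of $J$ acts by a primitive $|H|$-th root of unity. Since $\Delta_J(\zeta_{|H|})=0$, $H_1(E_J;\Q(\zeta))$ is nonzero, while $H_*(T;\Q(\zeta))=0$ and $H_0(E_J;\Q(\zeta))=0$ because the meridian acts nontrivially. In case (ii), restricted to $H$, $\Q[G]$ is a sum of copies of $\Q[H]$. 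Then $H_1(E_J;\Q[H])$ is the first homology of the $H$-cover of $E_J$, which has $b_1\geq 1$. The point to check is that there is excess rank beyond what comes from the boundary torus: since $f_J(\ell)=e$ and $H$ is nonabelian, the extra rank should be detected by a nontrivial irreducible summand on which the meridian acts nontrivially.

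With these in hand, the Mayer--Vietoris sequence
$$H_1(T)\to H_1(E_{K\cup\alpha})\oplus H_1(E_J)\to H_1(E_{K(\alpha,J)})\to H_0(T)$$
with $\Q[G][t^{\pm1}]$ coefficients shows that a free $\Q[t^{\pm1}]$-summand of $H_1(E_J;-)$, namely $V\otimes\Q[t^{\pm1}]$ with $V\neq 0$, survives in $H_1(E_{K(\alpha,J)})$ unless it is hit from $H_1(T)$. The contribution of $T$ is controlled because its homology on the relevant isotypic summand vanishes in case (i). In case (ii) it is at most the rank coming from $H_1(T)$, which should be smaller than the rank of $H_1(E_J)$.

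Alternatively, I would use Theorem \ref{lifting-thm}: build a nontrivial lift on $G(J)$, say a crossed homomorphism into $\Z[G\times\Z]$ that is nonzero but vanishes on the peripheral subgroup, and then extend it by zero over $G(K\cup\alpha)$. This is compatible with the gluing since the lift is trivial on $\pi_1(T)$.

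I expect the main obstacle to be case (ii): showing that the required cohomology class on $E_J$, for the coefficient $\Q[H]$ twisted by $f_J$, can be chosen to vanish on the boundary torus. I would first try the lifting approach, using that the lift on $G(J)$ is a cocycle with values in $\Q[H]$ vanishing on the meridian $m$ and longitude $\ell$. Its existence follows from a rank count, $\dim H^1(E_J,\partial E_J;\Q[H])>0$, obtained via Poincaré duality and the Euler characteristic $\chi(E_J)=0$ together with nontrivial $H_0$-deficiency from $H$ being nonabelian of weight one.
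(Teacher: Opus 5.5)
Your case (i) is fine. It is essentially the paper's proof of Proposition \ref{const-prop-1}: the same Mayer--Vietoris splitting along the torus $T$, with the $\langle f(m)\rangle$-isotypic decomposition of $\Q[G]$ in place of the covering-space bookkeeping, and the summand $\Q(\zeta_{|H|})$, on which $H_*(T)$ vanishes, does inject into $H_1(E_{K(\alpha,J)};\Q[G][t^{\pm1}])$.

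The gap is case (ii), where your Mayer--Vietoris mechanism is wrong. The inequality ${\rm rank}\,H_1(T)<{\rm rank}\,H_1(E_J)$ fails in general, and no summand of $H_1(E_J)$ need survive. Take the paper's suggested choice $H\cong C_3\rtimes C_2$, $h=f(m)$ of order $2$, $J=T(3,2)$. The $H$-cover of $E_J$ has three boundary tori, and its branched completion is the universal cover $S^3$ of $L(3,1)$. So its $b_1$ is $3$ and is spanned by the lifts of $m^2$. Hence ${\rm rank}\,H_1(E_J;\Q[H])=3<6={\rm rank}\,H_1(T;\Q[H])$, and $H_1(T)\to H_1(E_J)$ is onto.

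The rank actually comes from the connecting map $\partial\colon H_1(E_{K(\alpha,J)})\to H_0(T)$, which you write down but never use. With $M=\Q[G][t^{\pm1}]$:
$H_0(T;M)$ has rank $|G|/|\langle h\rangle|$;
$H_0(E_J;M)$ has rank $|G|/|H|$;
$H_0(E_{K\cup\alpha};M)$ is torsion, because a meridian $\mu$ of $K$ acts by $f(\mu)t$.
So ${\rm Im}\,\partial$ has rank at least $|G|/|\langle h\rangle|-|G|/|H|>0$, precisely because $H\neq\langle f(m),f(\ell)\rangle$.

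Your alternative via Theorem \ref{lifting-thm} is the paper's route (Proposition \ref{const-prop-2}). The ``$H_0$-deficiency'' you mention at the end is the right input, but the two steps that make up the proof are left undone.

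\emph{First, the derivation.} No duality or Euler characteristic is needed; the exact sequence of the pair gives it. An element $\xi\in\Q[G]^{\langle f(m),f(\ell)\rangle}\setminus\Q[G]^{H}$ has coboundary $u\mapsto((f(u),0)-(e,0))\xi$, which vanishes on $\pi_1(T)$ but not on $G(J)$. The paper takes $\xi=\bigl(\sum_j(f(m)^j,0)\bigr)\bigl(\sum_j(f(\ell)^j,0)\bigr)$ and extends the derivation by $0$ over $G(K\cup\alpha)$.

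\emph{Second, nontriviality.} In the sense of Theorem \ref{lifting-thm}, nontrivial is not the same as nonzero. In fact this lift restricted to $G(J)$ is trivial in that sense: it is principal there, so it vanishes on $\ker f|_{G(J)}$. Hence ``a nontrivial lift on $G(J)$'' is the wrong target, and nontriviality must be checked after gluing. The paper evaluates the lift on $(u_1u_2)^{d_3d_4}u_2^{-d_3d_4}$, with $f(u_1)\notin\langle f(m),f(\ell)\rangle$ and $u_2$ a meridian of $K$; the distinct $t$-degrees force a nonzero value. Alternatively, since your derivation vanishes on a meridian of $K$, you can invoke Theorem \ref{thm_52}. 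Or observe that a derivation vanishing on $\ker(f\times\phi)$ and on a meridian must be zero: $(e,k)$ is central in the image and $t^k-1$ is a non-zero-divisor in $\Z[G][t^{\pm1}]$.
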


In the two propositions below, we examine when $\Delta_{K(\alpha, J)}^{\rho \circ f}(t) = 0$ and then Theorem \ref{const-thm} follows from them immediately: The case (i) follows from Proposition \ref{const-prop-1} and the case (ii) from Proposition \ref{const-prop-2}.

\begin{remark}
By \cite[Lemma~1]{gru}, we can take $H$ as follows: (i) a cyclic group of order $pq$ with distinct primes $p, q$ or (ii) a non-abelian metabelian group $C_p^n \rtimes C_q$ of weight one, 
%$1$, 
where $p, q$ are distinct primes and $n$ is a positive integer. If we choose $H$ in these ways, we can take the torus knot $T(p,q)$ as $J$.
\end{remark}

Let $G$ be a finite group and $f \colon G(K(\alpha, J)) \to G$ a homomorphism. Since $E_{K(\alpha, J)} = E_{K \cup \alpha} \cup E_J$, we have the restriction homomorphism $f_J\colon G(J) \to G$. The homomorphism $f$ factors through $\psi$, i.e., there exists a homomorphism $f_0 \colon G(K) \to G$ such that $f = f_0 \circ \psi$, if and only if the image of $f_J$ is cyclic. 

\begin{proposition}\label{const-prop-1}
Suppose that the linking number ${\rm lk}(K, \alpha)$ equals zero. We assume that $f$ factors through $\psi$, and let $d$ be the order of the cyclic group $f_J(G(J))$. Then, $\Delta^{\rho \circ f}_{K(\alpha, J)}(t) = 0$ if and only if $\Delta^{\rho \circ f_0}_K (t) = 0$ or $\Delta_J(e^{2 \pi i /d} ) = 0$, where $\Delta_J (t)$ is the Alexander polynomial of $J$.
\end{proposition}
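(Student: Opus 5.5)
The plan is to compare $E_{K(\alpha,J)}=E_{K\cup\alpha}\cup_T E_J$ with $E_K=E_{K\cup\alpha}\cup_T V$, where $V$ is the solid torus $\nu(\alpha)$ and $T$ is the common boundary torus. I will use Mayer--Vietoris sequences with coefficients in $\Q[G][t^{\pm1}]$, and pass to the field $\Q(t)$ so that, by Remark \ref{rmk:torsion}, vanishing of a twisted Alexander polynomial becomes nonvanishing of a $\Q(t)$-dimension. Both decompositions share the piece $E_{K\cup\alpha}$, and the restrictions of $f$ and $f_0$ to $E_{K\cup\alpha}$ agree, because $f=f_0\circ\psi$ and $i_*=\psi\circ j_*$. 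Hence the two exact sequences differ only in the terms coming from $E_J$ versus $V$.

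\textbf{Step 1: the coefficient system on the $E_J$ side.} Since ${\rm lk}(K,\alpha)=0$, the meridian of $J$, which is glued to the preferred longitude of $\alpha$, has $\phi=0$. So $\phi$ vanishes on the image of $G(J)$, and the variable $t$ acts trivially there. The map $f_J$ factors through $\phi_J$ onto the cyclic group $\langle g\rangle\cong C_d$ with $g=f_0(\alpha)$. As a right $\Q[C_d]$-module, $\Q[G]$ is free of rank $|G|/d$. Therefore, over $\Q(t)$, the coefficient system on $E_J$, on $T$ and on $V$ splits as $|G|/d$ copies of $\bigoplus_{d'\mid d}\Q(\zeta_{d'})\otimes\Q(t)$. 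Here $\pi_1$ acts through $\phi_J$ by multiplication by $\zeta_{d'}$, and the longitude of $J$ acts trivially.

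\textbf{Step 2: local homology.} For the summand with $d'=1$, the three spaces $E_J$, $T$ and $V$ have the same homology as each other in the relevant degrees, namely that of a circle (or torus) with trivial coefficients. The inclusion of $T$ induces the same maps into $E_J$ and into $V$, since a knot exterior is a homology circle. For $\zeta=\zeta_{d'}$ with $d'>1$, the twisted homology of $T$ and of $V$ vanishes, because a meridian-type loop acts by $\zeta\neq1$. The group $H_1(E_J;\Q(\zeta))$ is the cyclic module $\Q(\zeta)[x]/(\Delta_J(x))$ evaluated at $x=\zeta$, after discarding the $(x-1)$ factor. Thus it is nonzero exactly when $\Delta_J(\zeta)=0$, and $H_0$ vanishes. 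Feeding this into the two Mayer--Vietoris sequences and applying the five lemma to the $d'=1$ part gives
\[
\dim_{\Q(t)}H_1(E_{K(\alpha,J)};\Q(t)^{|G|})=\dim_{\Q(t)}H_1(E_K;\Q(t)^{|G|})+\frac{|G|}{d}\sum_{1<d'\mid d}\dim_{\Q(\zeta_{d'})}H_1(E_J;\Q(\zeta_{d'})).
\]
This is positive if and only if either $\Delta^{\rho\circ f_0}_K(t)=0$ or $\Delta_J$ vanishes at a primitive $d'$-th root of unity for some $1<d'\mid d$. Since $\Delta_J\in\Z[x^{\pm1}]$, vanishing at one primitive $d'$-th root is equivalent to vanishing at all of them.

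\textbf{Step 3: reducing to the stated condition.} The ``if'' direction of the proposition follows from the formula by taking $d'=d$. For the ``only if'' direction, I must rule out the case where $\Delta_J$ vanishes at $\zeta_{d'}$ for a proper divisor $1<d'<d$ but not at $e^{2\pi i/d}$.

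When $d'$ is a prime power $p^k$, this cannot happen: $\Delta_J(1)=\pm1$ forces $\Delta_J(\zeta_{p^k})$ to be a unit modulo the prime above $p$, so $\Delta_J(\zeta_{p^k})\neq0$.

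When $d'$ is a proper divisor of $d$ that is not a prime power, the divisor-sum formula above genuinely allows such a contribution. I do not see an argument that excludes it for an arbitrary knot $J$. I expect this to be the main obstacle, and it is a genuine gap: the Step 2 computation only proves the equivalence with the condition ``$\Delta_J(\zeta)=0$ for some primitive $d'$-th root of unity with $1<d'\mid d$''. That condition coincides with the stated condition $\Delta_J(e^{2\pi i/d})=0$ when $d$ has no proper non-prime-power divisor greater than $1$, for instance when $d$ is a prime power or $d=pq$ as in case (i). The first thing I would do is re-check the eigenvalue bookkeeping for $\langle g\rangle$ acting on $\Q[G]$. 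If that bookkeeping is right, I would determine which reading of $d$ or of the $J$-condition the authors intend in the general case before claiming the ``only if'' direction as stated.
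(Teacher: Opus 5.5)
Your Steps 1 and 2 are correct. They carry out the same Mayer--Vietoris comparison as the paper, packaged differently.

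\emph{The paper's route.} It works with the disconnected $G\times\mathbb{Z}$-cover $\tilde E$ and integral ranks. Each component of $\tilde E_J$ is the $d$-fold cyclic cover of $E_J$ with connected boundary, so the connecting map to $H_0$ vanishes. Hence $H_1(\tilde E)$ is an extension of $H_1(\widehat E_J)$, which is infinitely many copies of $H_1(B^d_J)$, by a copy of $H_1(\tilde E_K)$.

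\emph{Your route.} You pass to $\mathbb{Q}(t)$ and split the local system on $T$, $E_J$ and $\nu(\alpha)$ into characters of $\langle f_0(\alpha)\rangle\cong C_d$. You then compare with the solid-torus filling that produces $E_K$. Your correction term is exactly the deck-group eigenspace decomposition of $H_1(B^d_J;\mathbb{Q})$.

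Two small corrections, neither of which affects positivity:
\begin{itemize}
\item The Alexander module of $J$ need not be cyclic. What you need, and what universal coefficients give, is that $H_1(E_J;\mathbb{Q}(\zeta))\neq 0$ if and only if $\Delta_J(\zeta)=0$, for $\zeta\neq 1$.
\item The $\mathbb{Q}(t)$-dimension of the $d'$-summand carries an extra factor $\varphi(d')$.
\end{itemize}

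Your doubt in Step 3 is justified, and the gap cannot be closed: the ``only if'' direction as printed is false. The paper's proof ends with the sentence ``$H_1(B^d_J)$ has positive rank, which is equivalent to the condition $\Delta_J(e^{2\pi i/d})=0$.'' The correct criterion is $\prod_{j=1}^{d-1}\Delta_J(e^{2\pi i j/d})=0$, which is precisely your condition.

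Here is a concrete counterexample.
\begin{itemize}
\item Let $K$ be the figure-eight knot, which is fibered with punctured-torus fiber $F$ and monodromy $A\in{\rm SL}(2,\mathbb{Z})$ on $H_1(F)$.
\item Let $G=(\mathbb{Z}/12)^2\rtimes_A C_N$, where $N$ is the order of $A$ mod $12$, and let $f_0\colon G(K)\to G$ be the obvious epimorphism.
\item Let $\alpha$ be a push-off of the core of one of the two Hopf bands forming $F$. It is unknotted, ${\rm lk}(K,\alpha)=0$, and $f_0(\alpha)$ has order $d=12$.
\item Let $J$ be the trefoil, so $\Delta_J=\Phi_6$.
\end{itemize}
Then $\Delta_K^{\rho\circ f_0}(t)\neq 0$ because $K$ is fibered, and $\Delta_J(e^{2\pi i/12})\neq 0$. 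Yet $\Delta_J(e^{2\pi i\cdot 2/12})=0$, so $b_1(B^{12}_J)=2$. The Mayer--Vietoris computation (yours, or the paper's up to its last sentence) therefore gives $\Delta^{\rho\circ f}_{K(\alpha,J)}(t)=0$.

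So what you proved is the correct form of the proposition: the condition should read ``$\Delta_J(\zeta)=0$ for some $\zeta\neq 1$ with $\zeta^d=1$.'' Your $\Delta_J(1)=\pm1$ argument shows this agrees with the printed condition whenever every divisor $1<d'<d$ of $d$ is a prime power.

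The paper's later uses survive the correction:
\begin{itemize}
\item Theorem~\ref{const-thm} and Proposition~\ref{prop:pqr} use only the ``if'' direction.
\item The proof of Theorem~\ref{thm:pqr} applies ``only if'' with $J=T(q,r)$. There $\Delta_J=\Phi_{qr}$ and $qr\nmid |H_J|$, so the corrected criterion gives the same conclusion.
\end{itemize}
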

\begin{proof}
By \cite[Theorem 3.1]{KSW05-1}, there exists a polynomial $h(t) \in \mathbb{Z}[t^{\pm 1}]$ such that $\Delta^{\rho \circ f}_{K(\alpha, J)}(t) = \Delta^{\rho \circ f_0}_K (t) \cdot h(t)$; since $\Delta^{\rho \circ f_0}_K (t) = 0$ implies that $\Delta^{\rho \circ f}_{K(\alpha, J)}(t) = 0$, we assume $\Delta^{\rho \circ f_0}_K (t) \neq 0$ and show that $\Delta^{\rho \circ f}_{K(\alpha, J)}(t) = 0$ if and only if $\Delta_J(e^{2 \pi i /d} ) = 0$.

Let $\tilde{E}$ be the covering space of $E_{K(\alpha, J)}$ corresponding to $f \times \phi \colon G(K(\alpha, J)) \to G \times \mathbb{Z}$, i.e., let $\tilde{E}_{K(\alpha, J)}$ be the universal covering space of $E_{K(\alpha,J)}$ 
%$E(K(\alpha, J))$ 
and define
$$\tilde{E} = (G \times \mathbb{Z}) \times_{f \times \phi} \tilde{E}_{K(\alpha, J)},$$
where $G$ and $\mathbb{Z}$ are equipped with the discrete topologies; remark that $\tilde{E}$ is not necessarily connected. It is sufficient to show that $H_1(\tilde{E})$ has infinite rank over $\mathbb{Z}$ (see Remark \ref{rmk:torsion}). Let $\pi_{K(\alpha, J)} \colon \tilde{E} \to E_{K(\alpha, J)}$ denote the covering map, and set
$$\tilde{E}_{K(\alpha, J)} = \pi_{K(\alpha, J)}^{-1}(E_{K \cup \alpha}), \qquad \tilde{E}_J = \pi_{K(\alpha, J)}^{-1}(E_J),$$
recalling that $E_{K(\alpha, J)} = E_{K \cup \alpha} \cup E_J$ as above.

Since $\tilde{E} = \tilde{E}_{K\cup \alpha} \cup \tilde{E}_J$, we have the Mayer-Vietoris exact sequence
$$\xymatrix@R=0cm{\cdots \ar[r] & H_1(\tilde{E}_{K \cup \alpha} \cap \tilde{E}_J) \ar[r]^-{\iota = (\iota_1, \iota_2)} & H_1(\tilde{E}_{K \cup \alpha}) \oplus H_1(\tilde{E}_J) \ar[r] & H_1(\tilde{E}) & \\
\ar[r]^-\partial & H_0(\tilde{E}_{K \cup \alpha} \cap \tilde{E}_J) \ar[r] & H_0(\tilde{E}_{K \cup \alpha}) \oplus H_0(\tilde{E}_J) \ar[r] & H_0(\tilde{E}) \ar[r] & 0.}$$
Because ${\rm lk}(K, \alpha) = 0$ and $f_J(G(J))$ is cyclic of order $d$, each connected component of $\tilde{E}_J$ is the $d$-fold cyclic covering space of $E_J$; in particular, the boundary is also connected and hence the inclusion homomorphism of $\tilde{E}_{K \cup \alpha} \cap \tilde{E}_J \subset \tilde{E}_J$ at dimension $0$ is injective. This implies that the connecting homomorphism $\partial\colon H_1(\tilde{E}) \to H_0(\tilde{E}_{K \cup \alpha} \cap \tilde{E}_J)$ equals zero. Thus, $H_1(\tilde{E})$ is isomorphic to the cokernel of the inclusion homomorphism $\iota \colon H_1(\tilde{E}_{K \cup \alpha} \cap \tilde{E}_J) \to H_1(\tilde{E}_{K \cup \alpha}) \oplus H_1(\tilde{E}_J)$.

Let $m \subset \partial E_J$ be a meridional loop and $\ell \subset \partial E_J$ a longitudinal loop with ${\rm lk}(J, \ell) = 0$. Let $M, L \subset H_1(\tilde{E}_{K\cup\alpha} \cap \tilde{E}_J)$ be the subspaces generated by the homology classes represented by the lifts of $m^d,\ell$, respectively. Since $\tilde{E}_{K\cup\alpha} \cap \tilde{E}_J = \partial E_J$, $H_1(\tilde{E}_{K\cup\alpha} \cap \tilde{E}_J) = M \oplus L$. Because each connected component of $\tilde{E}_J$ is the $d$-fold cyclic covering space of $E_J$, $\iota_2|_L \colon L \to H_1(\tilde{E}_J)$ is zero and $\iota_2|_M \colon M \to H_1(\tilde{E}_J)$ is injective. Hence ${\rm Coker} \; \iota$ contains a subspace isomorphic to $H_1(\tilde{E}_{K \cup \alpha})/\iota_1(L) \cong H_1(\tilde{E}_K)$, where $\tilde{E}_K$ is the covering space of $E_K$ corresponding to $f_0 \times \phi\colon G(K) \to G \times \mathbb{Z}$, and the quotient of ${\rm Coker}\;\iota$ over the subspace is isomorphic to the homology group $H_1(\widehat{E}_J)$ of $\widehat{E}_J$, the $3$-manifold obtained by attaching solid tori to the boundary components of $\tilde{E}_J$ so that the lifts of $m^d$ bound discs in $\widehat{E}_J$.

As we assume $\Delta^{\rho \circ f_0}_K (t) \neq 0$, the subspace isomorphic to $H_1(\tilde{E}_K)$ is of finite rank over $\mathbb{Z}$. Furthermore, because each connected component of $\widehat{E}_J$ is homeomorphic to the $d$-fold branched cyclic cover $B_J^d$ of $E_J$, $H_1(\widehat{E}_J)$ is isomorphic to the direct sum of infinitely many copies of $H_1(B_J^d)$. Therefore we have $\Delta^{\rho \circ f}_{K(\alpha, J)}(t) = 0$, i.e., ${\rm rank}_{\mathbb{Z}} \, H_1(\tilde{E}) = \infty$, if and only if $H_1(B_J^d)$ has positive rank, which is equivalent to the condition $\Delta_J(e^{2 \pi i/d}) = 0.$
\end{proof}

\begin{proposition}\label{const-prop-2}
Suppose that the linking number ${\rm lk}(K, \alpha)$ equals zero. If $f \colon G(K(\alpha, J)) \to G$ does not factor through $\psi$, $\Delta^{\rho \circ f}_{K(\alpha, J)}(t) = 0$.
\end{proposition}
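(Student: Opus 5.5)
The plan is to reuse the covering-space and Mayer--Vietoris set-up from the proof of Proposition~\ref{const-prop-1}. By Remark~\ref{rmk:torsion}, it suffices to show that $H_1(\tilde E)$ has infinite $\mathbb{Z}$-rank, where $\tilde E$ is the (possibly disconnected) cover of $E_{K(\alpha,J)}$ associated with $f\times\phi$. Write $\tilde E=\tilde E_{K\cup\alpha}\cup\tilde E_J$ as before.

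First I will determine the shape of $\tilde E_J$. The meridian of $J$ is glued to the preferred longitude of $\alpha$, and ${\rm lk}(K,\alpha)=0$. Hence $\phi$ is trivial on $G(J)$, so the covering restricted to $E_J$ is induced only by $f_J$. Put $Q=f_J(G(J))$; by hypothesis $Q$ is not cyclic. Then $\tilde E_J$ is a disjoint union of infinitely many copies of the connected $|Q|$-fold cover $X\to E_J$ corresponding to $f_J$, one copy for each coset in $(G\times\mathbb{Z})/(Q\times 0)$. Next, $Q$ is normally generated in itself by $f_J(m)$, so $Q^{ab}$ is cyclic. A noncyclic $Q$ is therefore nonabelian. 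In particular $Q$ is strictly larger than the abelian subgroup $f_J(\pi_1\partial E_J)=\langle f_J(m),f_J(\ell)\rangle$. Consequently each copy of $X$ has $c=[Q:\langle f_J(m),f_J(\ell)\rangle]\ge 2$ boundary tori.

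The main step produces infinite rank from this branching. I would use the map from $\tilde E$ to its dual graph $\Gamma$. Its vertices are the components of $\tilde E_{K\cup\alpha}$ and of $\tilde E_J$, and its edges are the components of $\tilde E_{K\cup\alpha}\cap\tilde E_J$. The Mayer--Vietoris sequence shows that $H_1(\tilde E)$ surjects onto $H_1(\Gamma)$: the image of $\partial$ is the kernel of $H_0(\cap)\to H_0(\tilde E_{K\cup\alpha})\oplus H_0(\tilde E_J)$, which is exactly $H_1(\Gamma)$. So it suffices to show $\operatorname{rank} H_1(\Gamma)=\infty$. The deck group $\mathbb{Z}$ (the $\phi$-direction) acts freely and cocompactly on $\Gamma$, so I compare ranks per period, i.e., the Euler characteristic of the quotient graph $\Gamma/\mathbb{Z}$. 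Per unit of $\mathbb{Z}$ there are $V_J=|G|/|Q|$ vertices of $X$-type, each of degree $c\ge2$, giving $cV_J$ edges. There are $V_0$ vertices of $\tilde E_{K\cup\alpha}$-type. I would show that the components of $\tilde E_{K\cup\alpha}$ are few: since the meridian of $K$ lies in $G(K\cup\alpha)$ and $\phi$ of it is $1$, each component is $\mathbb{Z}$-invariant, so the number of components in total is $[G:f(G(K\cup\alpha))]\le|G|/|Q|$. This requires checking that $f(G(K\cup\alpha))\supseteq Q$, which I expect to follow from van Kampen, since $G(J)$ is generated by conjugates of $m$ and $\ell$, both lying in $G(K\cup\alpha)$. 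Then $\chi(\Gamma/\mathbb{Z})\le V_0+V_J-cV_J\le V_0-V_J<0$ once $V_0$ is bounded by a finite count while the $X$-type vertices carry all the branching. A graph with a free cocompact $\mathbb{Z}$-action and quotient of negative Euler characteristic has infinitely generated $H_1$, since the kernel of the quotient's fundamental group onto $\mathbb{Z}$ is a normal subgroup of infinite index in a free group of rank $\ge2$.

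The expected obstacle is this counting step: the components of $\tilde E_{K\cup\alpha}$ must genuinely be $\mathbb{Z}$-invariant and finite in number, and the inequality must come out strictly negative. If the crude count $V_0\le V_J$ is not sharp enough, I would fall back on $3$-manifold topology. By half-lives-half-dies, each copy of $X$, having $c\ge2$ boundary tori, satisfies $b_1(X)\ge c$. I would then show that the image of $H_1(X)$ in the cokernel of $\iota$ has positive rank modulo the contributions of $\partial X$, using that only $|G|$ of the $\tilde E_{K\cup\alpha}$-classes per period can absorb them. Infinitely many copies of $X$ then give infinite rank.
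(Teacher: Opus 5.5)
Your Mayer--Vietoris/dual-graph route differs from the paper's and can be made to work. Three of your steps are correct: the description of $\tilde E_J$, the bound $c=[Q:\langle f(m),f(\ell)\rangle]\ge 2$, and the surjection $H_1(\tilde E)\to H_1(\Gamma)$. The decisive counting step, however, fails as written.

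Write $A=G(K\cup\alpha)$ and $F=f\times\phi$. The vertices of $\Gamma$ coming from $\tilde E_{K\cup\alpha}$ are indexed by $(G\times\Z)/F(A)$. Since $F(u^d)=(e,d)$ for a meridian $u$ of $K$ with $d$ the order of $f(u)$, there are only finitely many such vertices \emph{in total}, at most $|G|$. They therefore have infinite stabilizers in the deck group $\Z$. So $\Z$ does not act freely on $\Gamma$, and the lemma on free cocompact actions with $\chi<0$ does not apply. Your own claim that these components are $\Z$-invariant already contradicts freeness. Even within your framework, $V_0\le V_J$ and $c\ge 2$ give only $\chi(\Gamma/\Z)\le 0$, not $<0$.

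The auxiliary claims are also wrong in general. A component of $\tilde E_{K\cup\alpha}$ is preserved only by a finite-index subgroup of $\Z$, so the count $[G:f(A)]$ is correct only when $(e,1)\in F(A)$. Moreover, $f(A)\supseteq Q$ does not follow from van Kampen: $G(J)$ is generated by conjugates of $m$ by elements of $G(J)$, not of $A$. It can fail, e.g.\ if $f|_A$ factors through an epimorphism $G(K)\to S_3\subset S_4$ while $f_J$ maps onto $A_4$ with $f(m)=(1\,2\,3)$ and $f(\ell)=e$.

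None of this is needed. $\Gamma$ has finitely many vertices from $\tilde E_{K\cup\alpha}$. The vertices from $\tilde E_J$ are indexed by the infinite set $(G\times\Z)/(Q\times 0)$, and each has degree $c\ge 2$ with all neighbours among the former. Take the finite subgraph spanned by all vertices from $\tilde E_{K\cup\alpha}$ and $N$ vertices from $\tilde E_J$, together with their $cN$ edges. It has $b_1\ge (c-1)N-|G|+1$. Since $H_1$ of a subgraph injects into $H_1(\Gamma)$, $H_1(\Gamma)$ has infinite rank, hence so does $H_1(\tilde E)$, and Remark~\ref{rmk:torsion} concludes. The half-lives-half-dies fallback is superfluous.

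Thus repaired, your argument is the homological dual of the paper's proof. The paper instead applies Theorem~\ref{lifting-thm} with an explicit lift. It is zero on $G(K\cup\alpha)$, and on $G(J)$ it is the principal derivation $u\mapsto((f(u),0)-(e,0))\xi$. Here $\xi$ is a multiple of the norm element of $\langle f(m),f(\ell)\rangle$, so the lift vanishes on $\pi_1(\partial E_J)$. Nontriviality is checked on $(u_1u_2)^{d_3d_4}u_2^{-d_3d_4}$ with $f(u_1)\notin\langle f(m),f(\ell)\rangle$, which is exactly your condition $c\ge 2$.

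The paper's explicit cocycle carries over verbatim to arbitrary faithful representations, as in Lemma~\ref{v2-lem}. Your count instead makes the geometric source of the infinite rank transparent: one independent cycle arises from each branching $J$-piece.
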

\begin{proof}
By the Seifert-van Kampen theorem, the knot group $G(K(\alpha, J))$ is the amalgamated free product $G(K \cup \alpha) *_{\pi_1(\partial E_J)} G(J)$. Let $m, \ell \in G(J)$ denote the meridian and the preferred longitude of $J$, and let $d_1, d_2$ be the orders of $f(m)$ and $f(\ell)$, respectively. We set
$$\xi = \left(\sum_{j=0}^{d_1-1}(f(m)^j,0)\right) \left(\sum_{j=0}^{d_2-1} (f(\ell)^j,0)\right) \in \mathbb{Z}[G \times \mathbb{Z}],$$
and define homomorphisms $\tilde{f}_{K \cup \alpha} \colon G(K \cup \alpha) \to \mathbb{Z}[G \times \mathbb{Z}] \rtimes (G \times \mathbb{Z})$ and $\tilde{f}_J \colon G(J) \to \mathbb{Z}[G \times \mathbb{Z}] \rtimes (G \times \mathbb{Z})$ by
\begin{align*}
\tilde{f}_{K \cup \alpha}(u) &= (0; f(u), \phi(u)) & (u \in G(K \cup \alpha)),\\
\tilde{f}_J(u) &= \bigl(((f(u),0) - (e,0))\xi; f(u), 0\bigr) & (u \in G(J)).
\end{align*}
Because $m$ commutes with $\ell$, $((f(u),0) - (e,0))\xi = 0$ for $u \in \pi_1(\partial E_J)$. Therefore we obtain a homomorphism $\tilde{f} \colon G(K(\alpha, J)) \to \mathbb{Z}[G \times \mathbb{Z}] \rtimes (G \times \mathbb{Z})$ by gluing $\tilde{f}_{K \cup \alpha}$ and $\tilde{f}_J$.

By the definitions of $\tilde{f}_{K \cup \alpha}$ and $\tilde{f}_J$, $\tilde{f}$ is a lift of $f \times \phi$ (remark that $\phi$ is zero on $G(J)$ since ${\rm lk}(K, \alpha) = 0$). Furthermore we should remark that the image $f(E_J) \subset G$ is not generated by the two elements $f(m), f(\ell)$; if generated, $f(E_J)$ is an abelian group of weight one and hence cyclic, which contradicts the assumption that $f$ does not factor through $\psi$. Therefore we can take $u_1 \in G(J)$ such that $f(u_1) \not\in \langle f(m), f(\ell) \rangle$. Let $u_2 \in G(K \cup \alpha)$ be a meridional loop of $K$ and $d_3, d_4$ the orders of $f(u_1 u_2), f(u_2) \in G$. Then, we have
\begin{eqnarray*}
\tilde{f}((u_1u_2)^{d_3 d_4} u_2^{-d_3 d_4}) &=& \bigl(((f(u_1),0) - (e, 0))\xi; f(u_1 u_2), 1\bigr)^{d_3 d_4} (0; e, -d_3 d_4)\\
&=& \left( \sum_{j=0}^{d_3 d_4 -1} (f(u_1u_2)^j, j) ((f(u_1),0) - (e, 0))\xi; e, 0 \right).
\end{eqnarray*} 
Here, we note that the product in $\mathbb{Z}[G \times \mathbb{Z}] \rtimes (G \times \mathbb{Z})$ is given by $(\lambda;x,a)(\lambda';x',a')=(\lambda+(x,a)\lambda';xx',a+a')$, where $\lambda,\lambda'\in \Z[G\times \Z], x,x'\in G$, and $a,a'\in\Z$. 
Since $f(u_1) \not\in \langle f(m), f(\ell) \rangle$, $\sum_{j=0}^{d_3 d_4 -1} (f(u_1u_2)^j, j) ((f(u_1),0) - (e, 0))\xi \neq 0$. Therefore $\tilde{f}$ is a nontrivial lift and we have $\Delta^{\rho \circ f}_{K(\alpha, J)}(t) = 0$ by Theorem \ref{lifting-thm}.
\end{proof}

\begin{remark}
If ${\rm lk}(K, \alpha) \neq 0$, then $\Delta_{K(\alpha, J)}^{\rho \circ f}(t) = 0$ if and only if $\Delta_{K\cup \alpha}^{\rho \circ f}(t) = 0$ or $\Delta_J^{\rho \circ f}(t) = 0$. %Especially 
In particular, if $f$ maps a preferred longitude of $J$ to the identity, e.g., if $f$ factors through $\psi$, then $f$ induces a homomorphism $f_0 \colon G(K) \to G$ and the condition $\Delta_{K\cup \alpha}^{\rho \circ f}(t) = 0$ is equivalent to $\Delta_K^{\rho \circ f_0}(t) = 0$. A proof of these facts is similar to that of Proposition \ref{const-prop-1}; a different point is that $H_1(\tilde{E}_{K \cup \alpha} \cap \tilde{E}_J)$ is of finite rank if ${\rm lk}(K, \alpha) \neq 0$.
\end{remark}

\subsection{Existence of hyperbolic knots for a given TAV group}\label{sec:4.1}

In the previous subsection, we constructed a knot admitting a given TAV group. 
The purpose of this subsection is to prove the following theorem: 

\begin{theorem}[Theorem \ref{thm:main-5}]\label{thm:hyperbolic}
For any TAV group $G$, there are infinitely many hyperbolic knots that admit $G$ as a TAV group. Moreover, if $G$ is realized as a smallest TAV group of a knot, then infinitely many hyperbolic knots also admit $G$ as a smallest TAV group. 
%Moreover, if $G$ is realized as a smallest TAV group, we can take them so that they have $G$ as a smallest TAV group.
\end{theorem}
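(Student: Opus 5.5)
Fix a TAV group $G$ and a knot $K_0$ with an epimorphism $f_0\colon G(K_0)\to G$ such that $\Delta^{\rho\circ f_0}_{K_0}(t)=0$; if $G$ is a smallest TAV group of some knot, we choose $K_0$ to be that knot, so that $\ord(K_0)=|G|$. The plan is to modify $K_0$ by a satellite operation that does not change the set of finite quotients of the knot group, and in particular does not change which twisted Alexander polynomials vanish. A convenient tool is a \emph{hyperbolic pattern realizing a degree-one map}: by a result of the type of Kawauchi's imitation theory, or of Silver--Whitten and Boileau--Boyer--Reid--Wang, there are infinitely many hyperbolic knots $K_n$ with epimorphisms $\psi_n\colon G(K_n)\to G(K_0)$ preserving meridians and longitudes. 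Such maps arise, for example, from a degree-one map $E_{K_n}\to E_{K_0}$. As an alternative within the paper's framework, we can use the satellite $K_n=K_0(\alpha_n,J_n)$, where $\alpha_n$ has ${\rm lk}(K_0,\alpha_n)=0$ and $\alpha_n$ lies in $\ker f_0$. If $J_n$ has trivial Alexander polynomial, then Proposition \ref{const-prop-1} with $d$ arbitrary gives vanishing if and only if $\Delta^{\rho\circ f_0}_{K_0}=0$. We choose $\alpha_n$ so that $K_0\cup\alpha_n$ is a hyperbolic link, and then make the result hyperbolic by twisting along further unknotted curves in the kernel, using Thurston's hyperbolic Dehn surgery theorem.

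The key steps are as follows. First, obtain hyperbolic $K_n$ with $\psi_n\colon G(K_n)\to G(K_0)$ surjective and with $\phi_{K_n}=\phi_{K_0}\circ\psi_n$. The concrete way is to take a hyperbolic link $K_0\cup c$ with $c$ unknotted, ${\rm lk}(K_0,c)=0$, and $[c]\in\ker f_0$ null-homotopic-like enough that $1/n$ surgery on $c$ yields $K_n$. Since $c$ bounds a disk, $1/n$ surgery gives a knot in $S^3$; by Thurston, $K_n$ is hyperbolic for all but finitely many $n$, and the volumes converge to ${\rm vol}(K_0\cup c)$ from below, so infinitely many of the $K_n$ are distinct. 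Second, since $f_0(c)=e$, the map $f_0$ extends to $f_n\colon G(K_n)\to G$: the surgery only adds the relation $c\,\lambda_c^{n}=1$, and both $c$ and $\lambda_c$ map to elements with trivial image because ${\rm lk}=0$ and we choose $c$ in the kernel of $f_0$ together with its longitude. The covering of $E_{K_n}$ determined by $f_n\times\phi$ restricts over $E_{K_0\cup c}$ to the same covering as for $K_0$. A Mayer--Vietoris argument like that in Proposition \ref{const-prop-1} then shows that $H_1$ of this cover has infinite rank exactly when the cover of $E_{K_0}$ does. Hence $\Delta^{\rho\circ f_n}_{K_n}=0$.

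For the minimality statement we also need the converse: no group of order less than $|G|$ is a TAV group of $K_n$. We expect this to be the main obstacle, since the surgery can create new finite quotients. Our first attempt would be to choose $c$ in the intersection $N$ of the kernels of \emph{all} homomorphisms $G(K_0\cup c)\to H$ with $|H|<|G|$. There are finitely many such homomorphisms up to the finite data, so $N$ has finite index, and we take $c$ to be a disk-bounding curve representing an element of $N$ together with its longitude. Any epimorphism $G(K_n)\to H$ with $|H|<|G|$ pulls back to $G(K_0\cup c)$ and, under this condition, kills $c$ and $\lambda_c$, so it factors through $G(K_0)$. By the same Mayer--Vietoris comparison, vanishing for $K_n$ would then force vanishing for $K_0$, contradicting $\ord(K_0)=|G|$. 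The delicate part is making the order of choices consistent: $c$ must be chosen in $G(K_0)$ first, while the relevant kernels live in the link group. We plan to handle this by using the finite-index subgroup of the free-product-like group $G(E_{K_0}\setminus c)$ and choosing $c$ as a band sum of high commutator powers, so that it lies in every such kernel and $K_0\cup c$ remains hyperbolic.
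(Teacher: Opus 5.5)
Your proposal has two genuine gaps, one in each half of the statement.

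\textbf{Transfer of vanishing.} Your construction relies on the claim that, when $c$ is unknotted with ${\rm lk}(K_0,c)=0$ and $f_0(c)=e$, a Mayer--Vietoris argument shows the $f_n\times\phi$-cover of $E_{K_n}$ has $H_1$ of infinite rank exactly when the $f_0\times\phi$-cover of $E_{K_0}$ does. This is false. Both covers do restrict to the same cover $\tilde X$ of $X=E_{K_0\cup c}$. However, they are obtained from $\tilde X$ by filling the lifts of $\partial N(c)$ along different slopes. Mayer--Vietoris therefore only identifies their $H_1$ with $H_1(\tilde X)$ modulo the lifts of $\mu_c\lambda_c^{n}$, respectively of $\mu_c$, and these quotients can have different ranks. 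In Proposition~\ref{const-prop-1} the comparison works because the glued piece is a knot exterior with understood cyclic covers, not a solid torus with varying meridian.

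Here is a counterexample under exactly your hypotheses. Take $K_0=K(\alpha,J)$ with $J=T(2,3)$, $K$ fibered, ${\rm lk}(K,\alpha)=0$ and $f_0(\alpha)$ of order $6$ (e.g.\ $K=T(2,3)$, $G={\rm SL}(2,5)$). Then the twisted polynomial of $K_0$ vanishes by Proposition~\ref{const-prop-1}. Let $c\subset E_J$ be the circle around the twist region of the trefoil viewed as a twist knot. Then $c$ is unknotted, ${\rm lk}(c,K_0)=0$ and $f(c)=e$. Twisting along $c$ replaces $J$ by the other twist knots $W_k$, where $\Delta_{W_k}(t)=kt^2-(2k+1)t+k$ (the trefoil is $k=-1$, the unknot $k=0$). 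This polynomial vanishes at $e^{\pi i/3}$ only for $k=-1$. So by Proposition~\ref{const-prop-1}, every nontrivial twist gives a nonzero twisted polynomial.

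What you actually need is that $[c]$ be killed by a nontrivial lift as in Theorem~\ref{lifting-thm}, e.g.\ $c$ null-homotopic in $E_{K_0}$. Then $G(K_n)\to G(K_0)$ is an epimorphism, and divisibility (\cite[Theorem~3.1]{KSW05-1}, or \cite[Theorem~1]{HLN06-1} as the paper uses for its $K_k$) transfers vanishing. The same divisibility is what makes your ``imitation'' route work for the first assertion. Note, though, that an epimorphism $G(K_n)\to G(K_0)$ does not preserve the set of finite quotients; it only gives one inclusion, which is exactly why the second assertion is the hard part.

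\textbf{Minimality.} This part is not carried out. Your $N$ is defined through homomorphisms of $G(K_0\cup c)$, so requiring $[c]\in N$ is circular. The proposed remedy (band sums of high commutator powers, while keeping $c$ unknotted and $K_0\cup c$ hyperbolic) is unsubstantiated. Moreover, even if every small quotient of $G(K_n)$ factored through $G(K_0)$, your last step again uses the invalid Mayer--Vietoris equivalence, whereas a refilling can create rank.

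The paper avoids both problems by using closures $K_k$ of powers $b^k$ of a pseudo-Anosov braid.
\begin{itemize}
\item For $k$ coprime to $|H^n|!$, the Hurwitz fixed-point sets of $b$ and $b^k$ on $H^n$ coincide. Hence every homomorphism from $G(K_k)$ to a group of order $<\ord(K)$ factors through $G(K)$.
\item The product formula $\Delta_{K_k}^{\rho\circ f_k}(t)=\Delta_K^{\rho\circ f}(t)\prod_{j=1}^{k-1}F(t,\zeta_k^j)$ of \cite{HLN06-1}, with $F\neq0$ of $s$-width at most $n|H|$, then shows no new vanishing appears.
\item Hyperbolicity comes from hyperbolic Dehn filling on the braid axis.
\end{itemize}

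Within your twisting framework, the analogous device would be to take $n$ divisible by $(|G|-1)!$. Then any homomorphism $\bar g$ of $G(K_0\cup c)$ onto a group of order $<|G|$ that kills $\mu_c\lambda_c^{n}$ satisfies $\bar g(\mu_c)=\bar g(\lambda_c)^{-n}=e$, so factorization through $G(K_0)$ is automatic. You would still have to prove that the rank does not jump for such $n$, for instance by a generic-rank argument valid for all but finitely many $n$.
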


\begin{proof}
Let $K$ be a knot admitting $G$ as a TAV group and take an $n$-braid $b$ that presents $K$. Here, we may assume that $b$ is a pseudo-Anosov element in the braid group $B_n$. For example, if $n$ is prime, this assumption is satisfied; the primeness implies the irreducibility, and $b$ is not periodic since the closure $K$ is not a torus link. For a positive integer $k$ coprime to $n$, let $K_k$ be the knot obtained by taking the closure of the braid $b^k$. We first show that $G$ is a TAV group of $K_k$, and under the assumption $\ord(K) = |G|$ we prove that $\ord(K_k) = \ord(K)$ holds if $k$ is coprime to the natural numbers less than or equal to $n\cdot\ord(K)^n$. Finally, we see that $K_k$ is hyperbolic if $k$ is sufficiently large.

Let $p_k \colon G(K_k) \to G(K)$ denote the quotient map. For an epimorphism $f \colon G(K) \to G$ with $\Delta_K^{\rho \circ f}(t) = 0$, $\Delta_{K_k}^{\rho \circ (f \circ p_k)}(t) = 0$ since $\Delta_K^{\rho \circ f}(t)$ is a factor of $\Delta_{K_k}^{\rho \circ (f \circ p_k)}(t)$ by \cite[Theorem~1]{HLN06-1}; $G$ is a TAV group of $K_k$.

Let us consider the case $\ord(K) = |G|$. As shown above, we have $\ord(K_k)\leq\ord(K)$ for any $k$.
Let us assume that $k$ is coprime to the natural numbers less than or equal to $n\cdot\ord(K)^n$ and show the other inequality.

We first claim that any homomorphism from $G(K_k)$ to a group $H$ with $|H| < \ord(K)$ factors through the quotient map $p_k \colon G(K_k) \to G(K)$. To see this, we consider the Hurwitz action of $B_n$ on $H^n$. We should recall that there is a one-to-one correspondence between the fixed point set of $b \;\text{(resp. $b^k$)} \in B_n$ and the set of the group homomorphisms from $G(K)$ (resp. $G(K_k)$) to $H$. By the assumption, $k$ is coprime to $|H^n|!$ and hence is coprime to the order of $b$. Therefore the fixed point set of $b$ is equal to that of $b^k$; in other words, any homomorphism from $G(K_k)$ to $H$ factors through $G(K)$.

Thus, any homomorphism $f_k: G(K_k) \to H$ is the composite of the quotient map $p_k$ and a homomorphism $f: G(K) \to H$ if $|H| < \ord(K)$. By \cite[Theorem~3]{HLN06-1}, there exists a polynomial $F(t, s) \in \mathbb{C}[t^{\pm 1}, s^{\pm 1}]$ such that
$$\Delta_{K_k}^{\rho \circ f_k}(t) = \Delta_K^{\rho \circ f}(t) \prod_{j = 1}^{k-1} F(t, \zeta_k^j),$$
where $\zeta_k \in \mathbb{C}$ is a primitive $k$-th root of unity. As in the proof of \cite[Lemma~19]{HLN06-1}, $\Delta_K^{\rho \circ f}(t) \neq 0$ implies that $F(t, s) \neq 0$. Moreover, the width of $F(t, s)$ with respect to $s$ is less than or equal to $n|H|$ since $F(t, s)$ is the two-variable twisted Alexander polynomial $\Delta_{K \cup \alpha}^{\rho \circ f}(t, s)$ in the sense of \cite{Wada94-1}, where $\alpha$ is a trivial knot component around $b$. Since $k$ is coprime to any natural number less than or equal to $n|H|$, we have $F(t, \zeta_k^j) \neq 0$ for $j = 1, \dots, k-1$ and hence $\Delta_{K_k}^{\rho \circ f_k}(t) \neq 0$, which concludes that $H$ is not a TAV group of $K_k$ if $|H| < \ord(K)$, i.e., $\ord(K_k) \geq \ord(K)$.

Let us show that the knot $K_k$ is hyperbolic for sufficiently large $k$ using Thurston's hyperbolic Dehn surgery theorem. We regard $b$ as a self-homeomorphism of the $(n+1)$-punctured sphere, and construct a mapping torus $M$ with monodromy $b$. Then, $M$ becomes a hyperbolic $3$-manifold, 
because $b$ is a pseudo-Anosov element in $B_n$. If we take a trivial loop $\alpha$ around $b$, then $K\cup\alpha$ becomes a hyperbolic link. Hence, for a sufficiently large $k\in\N$, the hyperbolic Dehn surgery theorem implies that $M_{(\infty,k/0)}$ admits the hyperbolic structure. Here, $(\infty,k/0)$ denotes the attaching slopes for $K$ and $\alpha$ respectively. For example, $M_{(\infty,1/0)}$ denotes $S^3\setminus K$. 

When $k>1$ the loop $\alpha$ becomes a cone singularity, but if we take the $k$-fold cyclic branched covering along $\alpha$, we can obtain the usual hyperbolic structure.  Then, the resulting manifold coincides with $S^3\setminus K_k$, and hence, $K_k$ is a hyperbolic knot as desired. 
\end{proof}

Using \cite[Theorem 3.2]{MS22-1}, \cite[Theorem 1.3]{IMS23-1}, and Theorem \ref{thm:hyperbolic}, we have the following corollary (see \cite[Corollary 1.4]{IMS23-1} for the case $\ord(K)=24$). 

\begin{corollary}\label{cor:hyperbolic}
There are infinitely many hyperbolic knots $K\in\mathcal{N}$ with $\ord(K)=24,60,96$, or $120$.
\end{corollary}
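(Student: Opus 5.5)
The corollary follows by combining Theorem \ref{thm:hyperbolic} with the known TAV orders from \cite[Theorem 3.2]{MS22-1} and \cite[Theorem 1.3]{IMS23-1}. For each value $N\in\{24,60,96,120\}$, I will first take from those results a specific non-fibered knot $K_N\in\mathcal{N}$ with $\ord(K_N)=N$. For $N=24$ this is already done in \cite[Corollary 1.4]{IMS23-1}; the remaining values should appear in the same tables. I will then fix a smallest TAV group $G_N$ of $K_N$, so that $|G_N|=N$. By definition, $G_N$ is realized as a smallest TAV group of a knot, so the hypothesis of the second part of Theorem \ref{thm:hyperbolic} holds.

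Theorem \ref{thm:hyperbolic} then gives infinitely many hyperbolic knots $K'$ admitting $G_N$ as a smallest TAV group. For each such $K'$ this means $\ord(K')=|G_N|=N$. These knots are non-fibered, since their twisted Alexander polynomial vanishes and so they have a TAV group at all. Hence they lie in $\mathcal{N}$, the class of non-fibered knots on which $\ord$ is defined.

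Concretely, the proof of Theorem \ref{thm:hyperbolic} uses the knots $K_k$, the closures of $b^k$ for a pseudo-Anosov braid representative $b$ of $K_N$. Here $k$ ranges over the infinitely many integers that are sufficiently large and coprime to all natural numbers at most $n\cdot N^n$. Different $k$ give distinct knots, for example because hyperbolic volumes converge but are eventually distinct, or simply because Theorem \ref{thm:hyperbolic} already asserts infinitely many.

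The only step needing care is checking that each cited knot admits a braid representative of the required type: a pseudo-Anosov braid, for instance one with a prime number of strands. If needed, I would stabilize to a prime braid index. The knots are not torus knots, since torus knots are fibered, so irreducibility and non-periodicity follow as in the proof of Theorem \ref{thm:hyperbolic}. I expect this to be the main, though mild, obstacle; everything else is direct citation.
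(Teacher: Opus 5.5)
Your proposal is correct and is essentially the paper's argument: take knots with $\ord(K)=24,60,96,120$ from \cite[Theorem 3.2]{MS22-1} and \cite[Theorem 1.3]{IMS23-1}, and apply the second part of Theorem \ref{thm:hyperbolic} to a smallest TAV group of each. The braid-representative check you flag is already handled inside the proof of Theorem \ref{thm:hyperbolic}, and your volume remark is off (the volumes of $S^3\setminus K_k$ are $k$ times those of the cone-manifolds and grow without bound, rather than converging), but this is harmless since the theorem itself already gives infinitely many knots.
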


\subsection{Infinitely many hyperbolic knots $K$ with $\ord(K) =pqr$}\label{subsec:4.3}

\begin{theorem}[Theorem \ref{thm:main-6}]\label{thm:pqr}
Let $G$ be a TAV group of order $pqr$, where $p,q,r$ are primes.
Then there exist infinitely many hyperbolic knots that admit $G$ as a smallest TAV group;
in particular, there exist infinitely many hyperbolic knots $K$ with $\ord(K) = pqr$.
\end{theorem}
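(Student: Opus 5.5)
By Theorem \ref{thm:hyperbolic}, it suffices to produce a single knot $K$ that admits $G$ as a TAV group and satisfies $\ord(K)=pqr$. Theorem \ref{thm:hyperbolic} then upgrades this one knot to infinitely many hyperbolic knots admitting $G$ as a smallest TAV group.

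By Theorem \ref{thm_numTAVgroupsfourprimes}(i), $p<q<r$ are distinct with $q\equiv r\equiv 1 \pmod p$. Moreover $G\cong G(pqr;a,b)=(C_q\times C_r)\rtimes C_p$ with $G'=C_{qr}$. I would build $K$ with the satellite recipe of Subsection \ref{subsec:3.0}, in case (i) with $H=G'\cong C_{qr}$.

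First I choose a knot $K_0$ with an epimorphism $f_0\colon G(K_0)\to G$ and $\Delta^{\rho\circ f_0}_{K_0}(t)\neq 0$. The natural choice is the torus knot $T(2,p)$, or more generally a knot whose group surjects onto $C_p\ltimes C_{qr}$ through a dihedral-type quotient. A fibered choice such as a torus knot is preferable, since then every twisted Alexander polynomial of $K_0$ is nonzero. Next I choose an unknotted loop $\alpha$ with ${\rm lk}(K_0,\alpha)=0$ and $f_0(\alpha)$ a generator of $C_{qr}$. Finally I take $J=T(q,r)$, for which $\Delta_J(e^{2\pi i/qr})=0$. Proposition \ref{const-prop-1} then gives $\Delta^{\rho\circ f}_{K}(t)=0$ for $K=K_0(\alpha,J)$, so $G$ is a TAV group of $K$ and $\ord(K)\le pqr$. (This is Figure \ref{fig:satellite} in spirit, with $K_0=T(2,15)$ and $J=T(3,5)$ when $pqr=30$.)

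The main step is the lower bound $\ord(K)\ge pqr$. Let $G_1$ be any TAV group with $|G_1|<pqr$, and let $f_1\colon G(K)\to G_1$ be an epimorphism. There are two cases.
\begin{itemize}
\item[(a)] $f_1$ does not factor through $\psi$. Then the restriction to $G(J)$ has non-cyclic image. I would control this by noting that every quotient of $G(T(q,r))=\langle x,y\mid x^q=y^r\rangle$ has a very rigid structure. For $G_1$ a TAV group of small order, one would check from the classification of TAV groups of order less than $pqr$ (using Theorem \ref{thm:main-4} and the order analysis at the start of the proof of Theorem \ref{thm_numTAVgroupsfourprimes}) that no such restriction exists, or that it is forced to be cyclic.
\item[(b)] $f_1=f'\circ\psi$ factors through $\psi$. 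Then Proposition \ref{const-prop-1} says $\Delta^{\rho\circ f_1}_K=0$ if and only if $\Delta^{\rho\circ f'}_{K_0}=0$ or $\Delta_J(e^{2\pi i/d})=0$, where $d$ is the order of $f'(\alpha)$. The first alternative is excluded by the choice of $K_0$. The second requires $d$ to be divisible by $qr$, because $\Delta_{T(q,r)}(t)$ vanishes at $e^{2\pi i/d}$ only when $q\mid d$ and $r\mid d$. So $G_1$ would contain an element of order divisible by $qr$, and would also have a nontrivial abelianization quotient compatible with weight one. I would argue that $|G_1|\ge qr\cdot p$, using that $f'(\alpha)\in G_1'$ because ${\rm lk}(K_0,\alpha)=0$. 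Then $G_1'$ contains $C_{qr}$, and $G_1/G_1'$ is a nontrivial cyclic group acting nontrivially on $C_{qr}$ (otherwise $G_1'$ could not equal its own commutator-generated part). This forces $|G_1/G_1'|\ge p$, the smallest prime dividing both $q-1$ and $r-1$.
\end{itemize}

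I expect case (a) to be the obstacle. The cleanest route I would try first is to mimic the proof of Theorem \ref{thm:hyperbolic}: replace $J$, or the satellite, by a braid-power family, so that all homomorphisms to groups of order less than $pqr$ factor through $\psi$. Alternatively, I would pick $K_0$ with $\ord(K_0)$ large or fibered, so that case (a) can only occur for groups $G_1$ containing a non-cyclic quotient of $G(T(q,r))$, and such quotients already have order exceeding $pqr$.
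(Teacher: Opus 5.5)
Your overall plan matches the paper: reduce to one knot via Theorem~\ref{thm:hyperbolic}, build a satellite with $J=T(q,r)$ and $\alpha\mapsto$ a generator of $G'=C_{qr}$, and get the upper bound from Proposition~\ref{const-prop-1}. The gap is in the lower bound $\ord(K)\ge pqr$, and it sits exactly where the choice of $K_0$ and the structure of quotients of $G(T(q,r))$ matter.

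\textbf{The choice of $K_0$ and case (b).} Your named choice $K_0=T(2,p)$ fails. For odd $p$ the group $G$ has odd order, so $f(x)\in\langle f(x)^2\rangle=\langle f(y)^p\rangle\subset\langle f(y)\rangle$, and every image of $\langle x,y\mid x^2=y^p\rangle$ in $G$ is cyclic; for $p=2$ it is not a knot. The paper uses $K_0=T(p,qr)$ because every homomorphism from $G(T(p,qr))$ to a group of order prime to $p$ has cyclic image. Your case (b) bound is also false as abstract group theory. The smallest prime dividing both $q-1$ and $r-1$ is $2$, not $p$. For $p\ge 3$, the dihedral group $D_{qr}$ (e.g.\ $D_{91}$ against $pqr=273$) has weight one, commutator subgroup $C_{qr}$, and order $2qr<pqr$. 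What actually rules such groups out is the knot $K_0$. If $qr\mid|G_1|<pqr$, then $p\nmid|G_1|$, so $G_1$ is a cyclic image of $G(T(p,qr))$. Then the null-homologous $\alpha$ maps to $e$, contradicting $d=qr$.

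\textbf{Case (a).} You leave this case unproved, and both suggested routes fail. The braid-power trick of Theorem~\ref{thm:hyperbolic} only transfers $\ord$ from a knot you already control, so it is circular here. Also, non-cyclic quotients of $G(T(q,r))$ of order below $pqr$ do exist, e.g.\ $C_r\rtimes C_q$ of order $qr$ when $q\mid r-1$, as for $(p,q,r)=(2,3,7)$.

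The missing idea is that these images need not be cyclic but are always metabelian. Let $g_J\colon G(J)\to H$ with $|H|<pqr$ have non-cyclic image $H_J$. Then $H_J/Z(H_J)$ is generated by elements of order $q$ and $r$, so Lemma~\ref{lem:qr-grp} makes it metacyclic of order $qr$. Its trivial Schur multiplier forces $H_J$ to be metabelian, and also gives $qr\mid|H|$. The argument then runs as follows:
\begin{enumerate}
\item The preferred longitude of $J$ lies in $G(J)''$, so it maps to $e$, and $g$ induces $g_K$ on $G(T(p,qr))$.
\item Since $p\nmid|H|$, $g_K$ has cyclic image, so $\alpha$ maps to $e$.
\item Hence the meridian of $J$ maps to $e$, and $H$ is cyclic.
\end{enumerate}
In the remaining case $qr\nmid|H|$, $H_J$ is cyclic and $g$ factors through $\psi$. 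Proposition~\ref{const-prop-1}, with fiberedness of $T(p,qr)$ and $\Delta_J(e^{2\pi i/|H_J|})\neq0$, then gives $\Delta^{\rho\circ g}_{K}(t)\neq 0$. Without this dichotomy on whether $qr$ divides $|H|$, your argument does not establish $\ord(K)\ge pqr$.
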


\begin{figure}[t]
\centering
\includegraphics[width=8cm]{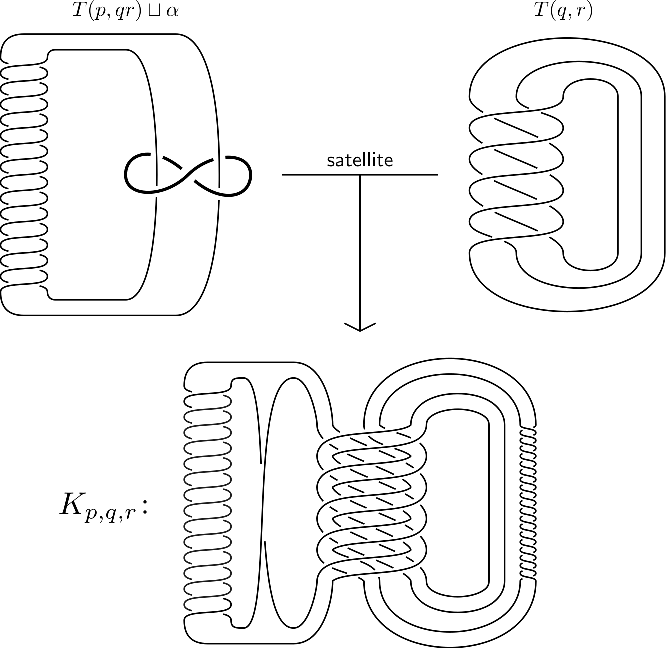}
\caption{Construction of $K_{p,q,r}$; $(p,q,r) = (2,3,5)$.}\label{fig:satellite}
\end{figure}

For a TAV group $G$ of order $pqr$, we construct a satellite knot as in Subsection \ref{subsec:3.0}: Let us take the torus knots $T(p, qr)$ and $T(q, r)$ as $K$ and $J$, respectively. Let $f_0 \colon G(K) \to G$ be a surjective homomorphism. We take an unknotted loop $\alpha$ disjoint from $K$ such that the linking number ${\rm lk}(K, \alpha)$ is zero and $f_0$ sends $\alpha$ to a generator of the commutator subgroup $C_{qr}$ of $G$. We denote the resulting satellite knot $K(\alpha, J)$ by $K_{p,q,r}$; see Figure \ref{fig:satellite} for an example. Let $f \colon G(K_{p,q,r}) \to G$ be the composite $G(K_{p,q,r}) \xrightarrow{\psi} G(K) \xrightarrow{f_0} G$.

\begin{proposition}\label{prop:pqr}
We have $\Delta_{K_{p,q,r}}^{\rho \circ f}(t) = 0;$ in particular, $G$ is a TAV group of $K_{p,q,r}$.
\end{proposition}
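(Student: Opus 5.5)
This is a direct application of Proposition \ref{const-prop-1} with $K = T(p,qr)$, $J = T(q,r)$, $d = qr$. By construction ${\rm lk}(K,\alpha)=0$, and $f = f_0\circ\psi$ factors through $\psi$. So the restriction $f_J$ has cyclic image: it is generated by the image of a meridian of $J$, which is identified with the loop $\alpha$. Since $f_0(\alpha)$ generates the commutator subgroup $C_{qr}$, that image has order $d = qr$. By Proposition \ref{const-prop-1}, it then suffices to show $\Delta_J(e^{2\pi i/qr}) = 0$; we do not need to know anything about $\Delta_K^{\rho\circ f_0}(t)$.

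The remaining computation is classical. For $J = T(q,r)$,
$$\Delta_J(t) = \frac{(t^{qr}-1)(t-1)}{(t^q-1)(t^r-1)},$$
whose roots are exactly the $qr$-th roots of unity that are neither $q$-th nor $r$-th roots of unity. A primitive $qr$-th root $e^{2\pi i/qr}$ is such a root, so $\Delta_J(e^{2\pi i/qr})=0$. This step requires $q \neq r$, so that $T(q,r)$ is a genuine torus knot and $qr$ is not a prime power. This holds because $G$ is a TAV group of order $pqr$: by Theorem \ref{thm_numTAVgroupsfourprimes}(i) and its proof, $p<q<r$ are distinct and $G' \cong C_{qr}$. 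Hence $\Delta^{\rho\circ f}_{K_{p,q,r}}(t)=0$.

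The step needing the most care is the existence of the data, namely verifying the setup before invoking the proposition. First, $f_0\colon G(T(p,qr))\to G$ must be surjective. I would use the presentation $\langle x,y\mid x^p=y^{qr}\rangle$ and send $x\mapsto z$ (of order $p$) and $y\mapsto$ a generator $c$ of $C_{qr}$. Both sides of the relation map to $e$, and the images generate $G\cong C_{qr}\rtimes C_p$. Second, we need an unknotted loop $\alpha$ with ${\rm lk}(K,\alpha)=0$ whose image is a generator of $C_{qr}$. Any element of $G(K)$ in the commutator subgroup can be represented by an unknotted loop with linking number zero, for instance by a band-sum/crossing-change argument: represent the element by a closed curve and perform crossing changes of $\alpha$ with itself, which do not alter the homotopy class in $E_K$. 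The generator $c$ lies in $G'$, which is the image of $G(K)'$ under the surjection $f_0$, so we can take $\alpha$ representing a preimage of $c$ in $G(K)'$. With these checked, Proposition \ref{prop:pqr} follows.
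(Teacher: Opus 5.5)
Your proof is correct and follows the paper's argument. You apply Proposition~\ref{const-prop-1} with $d = qr$, since $f_J(G(J)) = \langle f_0(\alpha)\rangle = G' \cong C_{qr}$, and use that $\Delta_{T(q,r)}(t)$ is the $qr$-th cyclotomic polynomial, so $\Delta_J(e^{2\pi i/qr}) = 0$.

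Your extra checks make explicit setup that the paper takes for granted. These are the surjectivity of $f_0$ via $x\mapsto z$, $y\mapsto c$, and realizing a preimage of $c$ in $G(K)'$ by an unknotted loop with linking number zero. For the ``in particular'' clause you should also note that $f = f_0\circ\psi$ is onto, because $i_* = \psi\circ j_*$ and $i_*$ is surjective.
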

\begin{proof}
Since $G'$ is a cyclic group of order $qr$, we can apply Proposition \ref{const-prop-1} to $\Delta_{K_{p,q,r}}^{\rho \circ f}(t)$: Recalling that the Alexander polynomial $\Delta_J(t)$ of $J = T(q,r)$ is the $qr$-th cyclotomic polynomial, we have $\Delta_J(e^{2 \pi i/qr}) = 0$ and hence $\Delta_{K_{p,q,r}}^{\rho \circ f}(t) = 0.$
\end{proof}

In order to prove Theorem \ref{thm:pqr}, we prepare a purely algebraic lemma:

\begin{lemma}\label{lem:qr-grp}
Let $q, r$ be distinct primes with $q < r$ and let $H$ be a finite group that is generated by $x \in H$ with order $q$ and $y \in H$ with order $r$. If $|H| \leq q^2r/2$, $H$ is a metacyclic group of order $qr$.
\end{lemma}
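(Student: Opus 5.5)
The plan is a Sylow analysis that pins down the Sylow $r$-subgroup, followed by a normal-complement argument. Since $x$ and $y$ have coprime orders $q$ and $r$, the order $|H|$ is divisible by $qr$. Write $|H| = qrm$; the hypothesis $|H| \le q^2r/2$ gives $m \le q/2 < q < r$. Two consequences follow at once. A Sylow $r$-subgroup $P$ has order exactly $r$, and a Sylow $q$-subgroup has order exactly $q$. We may therefore take $P = \langle y\rangle$ and $\langle x\rangle$ as a Sylow $q$-subgroup. The goal is to show that one of $\langle x\rangle$, $\langle y\rangle$ is normal. Then $H = \langle x\rangle\langle y\rangle$ has order $qr$, i.e.\ $m=1$, and $H$ is an extension of one cyclic group by another, hence metacyclic (cf.\ the discussion of groups of square-free order in the proof of Theorem \ref{thm_numTAVgroupsfourprimes}).

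First step: count Sylow $r$-subgroups. We have $n_r \mid qm$ and $n_r \equiv 1 \pmod r$. If $q \nmid n_r$, then $n_r \mid m < r$, which forces $n_r = 1$, so $\langle y\rangle \trianglelefteq H$ and we are done. So assume $n_r = qm'$ with $m' \mid m$ and $qm' \equiv 1 \pmod r$. Then $N_H(P)$ has order $r\cdot m/m'$. The quotient $N_H(P)/C_H(P)$ embeds in $\mathrm{Aut}(P) \cong C_{r-1}$ and has order dividing $m/m'$.

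Second step, in the case $N_H(P) = C_H(P)$. Here Burnside's normal $p$-complement theorem (with $p=r$) gives a normal subgroup $K$ of order $qm$ with $H/K \cong C_r$. The image of $x$ in $H/K$ is trivial since $x$ has order $q$, so $x \in K$. Inside $K$, the number of Sylow $q$-subgroups divides $m < q$ and is $\equiv 1 \pmod q$, so $\langle x\rangle$ is the unique Sylow $q$-subgroup of $K$. It is then characteristic in $K$, hence normal in $H$, and the conclusion follows as above.

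The main obstacle is the remaining case, where $y$'s normalizer induces a nontrivial automorphism group of order $e>1$ on $P$, with $e \mid \gcd(r-1, m/m')$ and $e \le m \le q/2$. I would first try to run the same transfer argument one prime lower. Apply a Burnside/Frobenius normal-complement argument to a suitable subgroup of small order, or pass to the normal closure of $\langle x\rangle$, using that all primes dividing $m$ are smaller than $q$. If that fails, fall back on counting: $H$ contains exactly $qm'(r-1)$ elements of order $r$. Combined with $qm' \equiv 1 \pmod r$ and $m' \le q/2$, this should leave too few elements for a further Sylow $q$-configuration unless $m' = m = 1$. Pushing this element count through to a contradiction is where I expect the real work to lie.
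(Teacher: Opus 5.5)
Your reduction (it suffices that $\langle x\rangle$ or $\langle y\rangle$ be normal) is correct. So is the step $n_r=qm'$ with $m'\mid m$ and $qm'\equiv 1\pmod r$, and so is the Burnside normal $r$-complement argument when $N_H(P)=C_H(P)$. But the case $N_H(P)\neq C_H(P)$, which you leave open, is where the content of the lemma lies, and neither fallback you sketch is an argument. In particular, the element count cannot close it using only information about $r$. In this case $m'<m$: if $m'=m$ then $N_H(P)=P$ and Burnside applies. So $m'\le m/2$, and the $qm'(r-1)$ elements of order $r$ fill less than half of $H$. If $n_q>1$, the crude bound $n_q\ge r$ (from $r\mid n_q$) gives only $r(q-1)\le qrm/2$ elements of order $q$ once $m\ge 2$. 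So the count does not overflow $|H|=qrm$ unless $n_q$ is pinned down exactly.

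The missing idea is to treat $n_q$ exactly as you treated $n_r$, and then couple the two Sylow congruences. Suppose $n_q,n_r>1$. Then $n_q\ge q+1>m$ and $n_q\mid rm$ force $r\mid n_q$ (otherwise $n_q\mid m$). So $n_q=rm''$ with $1\le m''\le m<q$ and $rm''\equiv 1\pmod q$. Likewise $n_r=qm'$ with $1\le m'\le m<r$ and $qm'\equiv 1\pmod r$. Thus $m''$ and $m'$ are the least positive representatives of $r^{-1}\bmod q$ and $q^{-1}\bmod r$. Hence $rm''+qm'\equiv 1\pmod{qr}$ with $0<rm''+qm'<2qr$, i.e.\ $n_q+n_r=qr+1$. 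But $n_q+n_r\le m(q+r)\le \frac{q}{2}(q+r)<qr$ because $q<r$, which is a contradiction. So $n_q=1$ or $n_r=1$ directly, with no need for Burnside's theorem or the case split on $N_H(P)/C_H(P)$. This is the paper's proof; it phrases the final step as: one of $n_q,n_r$ exceeds $qr/2$, forcing $|H|>q^2r/2$.
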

\begin{proof}
Because $H$ has elements with orders $q$ and $r$, the order of $H$ is a multiple of $qr$; define $n = |H|/qr$. Let $Q, R$ be the subgroup generated by $x, y$, respectively. Since $n < q < r$, $Q$ and $R$ are Sylow subgroups.

We claim that at least one of $Q$ or $R$ is normal in $H$. Let $n_q, n_r$ be the numbers of the Sylow $q$-subgroup and Sylow $r$-subgroups of $H$, respectively, and suppose that $n_q, n_r >1$. By the Sylow theorem, $n_q \in q\mathbb{Z} + 1$ and especially $n_q > q$. Considering the transitive action of $H$ on the set of the Sylow $q$-subgroups, we find that $n_q$ divides $nr$; this implies that $r$ divides $n_q$ since $n_q > q > n$. Then, $n'_q := n_q/r$ is a positive integer less than or equal to $n$. In the same way, $n'_r := n_r/q$ is an integer with $1 \leq n'_r \leq n$.

Because $q$ and $r$ are coprime, we can take integers $a, b$ such that $ar + bq = qr + 1$, and we may further assume that $0 < a < q$ and hence $0 < b < r$. Here, the integer $a$ can be characterized as the unique integer such that $ar \in q \mathbb{Z} + 1$ and $0 < a < q$. Thus, we find $a = n'_q$ and similarly $b = n'_r$. We have $n_q + n_r = qr + 1$ and hence one of $n_q$ and $n_r$ is greater than $qr/2$. In either case, this means $|H| > qr/2 \cdot q = q^2r/2$, but this is a contradiction. Therefore $n_q = 1$ or $n_r = 1$.

Since $Q$ or $R$ is normal in $H$, we find $H = QR$. Recalling that $Q$ and $R$ are cyclic, we conclude that $H$ is a metacyclic group of order $qr$.
\end{proof}

\begin{proof}[Proof of Theorem \ref{thm:pqr}]
The existence of infinitely many hyperbolic knots admitting $G$ as a TAV group is a direct consequence of Proposition \ref{prop:pqr} and Theorem \ref{thm:hyperbolic}. After verifying $\ord(K_{p,q,r}) = pqr$, Theorem \ref{thm:hyperbolic} also shows that the family of hyperbolic knots contains infinitely many knots $K'$ with $\ord(K') = pqr$.

We first show that the image $H_J$ of any homomorphism $g_J \colon G(J) \to H$ from $G(J)$ to any group $H$ with $|H| < pqr$ is metabelian. Let us recall that $G(J) = G(T(q, r))$ has a well-known presentation $\langle x, y \mid x^q = y^r \rangle$ and that its center is the infinite cyclic group generated by $x^q = y^r$. Thus, $H_J/Z(H_J)$, where $Z(H_J)$ is the center of $H_J$, is generated by two elements $\overline{g_J(x)}$ and $\overline{g_J(y)}$, where an over-line expresses the image in the quotient group $H_J/Z(H_J)$, and we find $\overline{g_J(x)}^q = \overline{g_J(y)}^r = \overline{e}$. If $\overline{g_J(x)}$ or $\overline{g_J(y)}$ is the identity, $H_J$ is generated by a single element and hence cyclic. Otherwise, we can apply Lemma \ref{lem:qr-grp} (we should recall that $p \leq (q-1)/2$) to $H_J/Z(H_J)$ to find that $H_J/Z(H_J)$ is a metacyclic group of order $qr$. Because the second homology group of a metacyclic group of order $qr$ is trivial, any central extension preserves the commutator subgroup and only extends the abelianization. Thus, $H_J$ is metabelian. Furthermore, this proof shows that if $H_J$ is not cyclic, then $|H|$ is a multiple of $qr$.

Let $g\colon G(K_{p, q, r}) \to H$ be an epimorphism onto a group $H$ with $|H| < pqr$. Since $E_{K_{p,q,r}} = E_{K \cup \alpha} \cup E_J$, we have the restriction homomorphism $g_J \colon G(J) \to H$ and its image $H_J$ is metabelian as seen above; in particular, the second derived group $(H_J)''$ is trivial. Recalling that a preferred longitude of $J$ and a meridian of $\alpha$ are glued together in $E_{K_{p,q,r}}$, we find that $g$ induces a group homomorphism $g_K \colon G(K) \to H$.

If $|H| \in qr \mathbb{Z}$, $p$ does not divide $|H|$ and hence the image of $g_K\colon G(K) = G(T(p, qr)) \to H$ is cyclic. In this case, (a preferred longitude of) $\alpha$ is mapped to $e \in H$. Since a preferred longitude of $\alpha$ is identified with a meridian of $J$, $g_J$ is trivial and it follows that $H$ is cyclic; especially, $H$ is not a TAV group.

If $|H| \not\in qr \mathbb{Z}$, $H_J$ is cyclic as seen above, which also implies that $g \colon G(K_{p,q,r}) \to H$ factors through $\psi \colon G(K_{p,q,r}) \to G(K)$, i.e., there exists a homomorphism $g_0\colon G(K) \to H$ such that $g = g_0 \circ \psi$. Since $K = T(p,qr)$ is fibered, $\Delta_K^{\rho \circ g_0}(t) \neq 0$. Also, because the roots of the Alexander polynomial $\Delta_J(t)$ of $J = T(q,r)$ are the primitive $qr$-th roots of unity, the assumption $|H| \not\in qr\mathbb{Z}$ implies that $\Delta_J(e^{2 \pi i/|H_J|}) \neq 0$. Thus, Proposition \ref{const-prop-1} concludes that $\Delta_{K_{p,q,r}}^{\rho \circ g}(t) \neq 0$: $H$ is not a TAV group of $K_{p,q,r}$.
\end{proof}

\begin{corollary}\label{cor:273}
There exist infinitely many hyperbolic knots that admit distinct smallest TAV groups. Moreover, there exist infinitely many TAV orders $\ord(K)$ which correspond to such smallest TAV groups. 
\end{corollary}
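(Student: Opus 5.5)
We will combine Theorem \ref{thm:pqr} with part (i) of Theorem \ref{thm_numTAVgroupsfourprimes}. By (i), whenever $p<q<r$ are primes with $q\equiv r\equiv 1 \pmod p$, there are exactly $p-1$ pairwise non-isomorphic TAV groups of order $pqr$, namely $G(pqr;a,b^i)$ for $i=1,\dots,p-1$. So we will choose $p\ge 3$, which gives at least two non-isomorphic TAV groups of order $pqr$. By Dirichlet's theorem on primes in arithmetic progressions, for each odd prime $p$ there are infinitely many pairs of primes $q<r$ with $q\equiv r\equiv 1 \pmod p$. Hence the set of products $pqr$ arising this way is infinite. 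These products will be the infinitely many TAV orders in the second sentence of the statement.

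For such a triple $(p,q,r)$, let $G_1=G(pqr;a,b)$ and $G_2=G(pqr;a,b^2)$; they are non-isomorphic by (i). The goal is to find hyperbolic knots admitting both $G_1$ and $G_2$ as smallest TAV groups. The satellite knot $K_{p,q,r}=K(\alpha,J)$ of Subsection \ref{subsec:4.3} depends on a chosen epimorphism $f_0\colon G(T(p,qr))\to G$. The key step is to show that one torus knot $K=T(p,qr)$, with one loop $\alpha$, serves both groups at once. Concretely, we want epimorphisms $f_1\colon G(K)\to G_1$ and $f_2\colon G(K)\to G_2$ that both send $\alpha$ to a generator of the commutator subgroup $C_{qr}$.

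To build these maps we use $G(T(p,qr))=\langle u,v\mid u^p=v^{qr}\rangle$. For $j=1,2$, set $u\mapsto z$ and $v\mapsto xy$; the relation holds because both sides are trivial. This map is surjective because $\langle z, xy\rangle$ contains $C_{qr}=\langle xy\rangle$ and $z$. The loop $\alpha$ can then be taken to be any null-homologous loop in $E_K$ representing, say, the commutator $[u,v]$. Its image $zxyz^{-1}(xy)^{-1}=x^{a-1}y^{b^j-1}$ generates $C_{qr}$ because $a\not\equiv1$ and $b^j\not\equiv 1$. Choosing $\alpha$ unknotted in $S^3$ is possible, since any element of $G(K)$ with zero linking number is represented by an unknotted loop via crossing changes with $K$; this is the same choice the construction in Subsection \ref{subsec:3.0} already uses.

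Once this is in place, the proof of Theorem \ref{thm:pqr}, and in particular its final part showing $\ord(K_{p,q,r})=pqr$, uses only the knot $K_{p,q,r}$ and not the particular $f_0$. So $\ord(K_{p,q,r})=pqr$, and Proposition \ref{prop:pqr} applied to $f_1\circ\psi$ and $f_2\circ\psi$ shows that both $G_1$ and $G_2$ are TAV groups of $K_{p,q,r}$. Hence both are smallest TAV groups. Next we apply Theorem \ref{thm:hyperbolic}, taking the braid powers $K_k$ with $k$ coprime to all numbers at most $n\cdot\ord(K)^n$. Its proof shows that every smallest TAV group of $K$ remains a TAV group of $K_k$ (the first step, via \cite[Theorem~1]{HLN06-1}) and that $\ord(K_k)=\ord(K)$. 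Therefore $G_1$ and $G_2$ are distinct smallest TAV groups of infinitely many hyperbolic knots $K_k$. The main obstacle is the simultaneous choice of a single $\alpha$; the explicit presentation above is how I plan to get around it.
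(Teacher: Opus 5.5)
Your proposal is correct and follows the paper's route. The paper takes $p=3$: it uses the two groups of order $273=3\cdot7\cdot13$, gets infinitely many orders from the infinitely many primes $q\equiv 1 \pmod 3$, and simply asserts that the proof of Theorem \ref{thm:pqr} ``depends only on the order''. Your explicit choice of one loop $\alpha$ representing $[u,v]$, whose image $x^{a-1}y^{b^j-1}$ generates $C_{qr}$ in both groups, supplies what the paper leaves implicit: a single knot $K_{p,q,r}$, and hence each $K_k$, carries both $G_1$ and $G_2$.

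One small correction: $\alpha$ is made unknotted by self-crossing changes of $\alpha$, which are homotopies in $E_K$. Crossing changes between $\alpha$ and $K$ would instead change its class in $G(K)$.
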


\begin{proof}
Let $G_i~(i=1,2)$ be the finite groups defined by
\begin{align*}
G_1&=C_{91}\rtimes C_3=\la x,y\,|\,x^{91}, y^3,yxy^{-1}x^{-74}\ra, \\
G_2&=C_{91}\rtimes_4 C_3=\la x,y\,|\, x^{91},y^3,yxy^{-1}x^{-9}\ra.
\end{align*} 
They appear in GroupNames \cite{GN} as $(273,3)$ and $(273,4)$
respectively, and share the order $273=3\cdot7\cdot13$. 

If we apply Theorem \ref{thm:pqr} to these TAV groups $G_1$ and $G_2$, we can obtain infinitely many hyperbolic knots as desired. Because the proof of Theorem \ref{thm:pqr} depends only on the order of these TAV groups. 
Since it is well known that there exist infinitely many prime numbers $q$ satisfying $q\equiv1~(\mathrm{mod} ~3)$, the latter assertion follows immediately.
\end{proof}

In our previous paper \cite{IMS23-1}, we proposed the following problem:

\begin{problem}[{\cite[Problem~5.3]{IMS23-1}}]\label{que:5.3}
Find a non-fibered knot $K$ and an epimorphism $f \colon G(K) \to D_{15}$ such that $\D_K^{\rho\circ f}(t)=0$. 
\end{problem}

As a corollary of Theorem \ref{thm:pqr}, we can give an answer to Problem \ref{que:5.3}. 

\begin{corollary}\label{cor:D_15}
Let $K=K_{2,3,5}$ be the knot introduced after Theorem \ref{thm:pqr}. 
%with respect to the companion $T(3,5)$ and the pattern $T(2,15)$. 
Then, the following hold:
\begin{itemize}
\item[(i)]
There is an epimorphism $f\colon G(K)\to D_{15}$ such that 
$\D_K^{\rho\circ f}(t)=0$, 
\item[(ii)]
For any epimorphism $f\colon G(K) \to G$ with $|G|<30$, 
$\D_K^{\rho\circ f}(t)\not=0$. 
\end{itemize}
In particular, it holds that $\ord(K)=30$. 
\end{corollary}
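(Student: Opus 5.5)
This is the case $(p,q,r)=(2,3,5)$ of the construction preceding Proposition \ref{prop:pqr}, with $G=D_{15}$. The plan is to check that everything used there applies to this triple, so that the corollary follows from Proposition \ref{prop:pqr} and the proof of Theorem \ref{thm:pqr}.

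For (i), we first note that $D_{15}$ is a TAV group of order $30=2\cdot 3\cdot 5$. Indeed, $D_{15}\cong G(30;-1,-1)=(C_3\times C_5)\rtimes C_2$ in the notation of Theorem \ref{thm_numTAVgroupsfourprimes}(i). Its commutator subgroup is $C_{15}$, and its weight is one, since it is normally generated by a reflection.

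Next we need an epimorphism $f_0\colon G(T(2,15))\to D_{15}$. We use the presentation $\langle x,y\mid x^2=y^{15}\rangle$ and send $x$ to a reflection $s$ and $y$ to a rotation $t$ of order $15$. The relation holds because $s^2=e=t^{15}$. The image contains $s$ and $t$, so $f_0$ is onto.

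We then choose the unknotted loop $\alpha$ with ${\rm lk}(K,\alpha)=0$ whose image is a generator of $C_{15}$, as in Figure \ref{fig:satellite}. A commutator of meridians works, since its image is $t^{\pm 2}$, which generates $C_{15}$. Proposition \ref{prop:pqr} then gives $\Delta_K^{\rho\circ f}(t)=0$ for $f=f_0\circ\psi$. Here we use that $\Delta_{T(3,5)}$ is the $15$th cyclotomic polynomial, together with Proposition \ref{const-prop-1} with $d=15$.

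Surjectivity of $f$ follows because $\psi$ is surjective: it restricts to $i_*$ on $G(K\cup\alpha)$, and $i_*$ is onto $G(K)$.

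For (ii), we rerun the proof of Theorem \ref{thm:pqr} for groups $H$ with $|H|<30$. The key step is Lemma \ref{lem:qr-grp} with $q=3$ and $r=5$. It needs $|H_J/Z(H_J)|\le q^2r/2=22.5$. This is where we expect the main obstacle, because the general argument uses the bound $p\le (q-1)/2$, which fails for $p=2$, $q=3$.

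So we argue directly. $H_J/Z(H_J)$ is generated by an element of order $3$ and an element of order $5$, so its order is a multiple of $15$ and at most $|H|<30$. Hence its order is exactly $15$, and every group of order $15$ is cyclic, which makes the conclusion of the lemma immediate. It follows that $H_J$ is abelian, so metabelian, and if $H_J$ is not cyclic then $15\mid |H|$. (A cyclic $H_J$ is already handled by the second case below.)

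The two cases of the proof of Theorem \ref{thm:pqr} then go through unchanged.

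\emph{Case $15\mid |H|$.} Then $|H|=15$, so $2\nmid |H|$. The image of $G(T(2,15))$ is therefore cyclic and kills $\alpha$. This makes $g_J$ trivial, so $H$ is cyclic and hence not a TAV group.

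\emph{Case $15\nmid |H|$.} Then $H_J$ is cyclic of order $d$ with $15\nmid d$, so $\Delta_{T(3,5)}(e^{2\pi i/d})\ne0$. Since $T(2,15)$ is fibered, Proposition \ref{const-prop-1} gives non-vanishing.

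Combining (i) and (ii) gives $\ord(K)=30$.
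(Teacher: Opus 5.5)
Your proposal is correct and is essentially the paper's argument: the corollary is Proposition \ref{prop:pqr} together with the proof of Theorem \ref{thm:pqr}, specialized to $(p,q,r)=(2,3,5)$ and $G=D_{15}$. Your direct handling of the Lemma \ref{lem:qr-grp} step is a legitimate repair, since the paper's parenthetical bound $p\le (q-1)/2$ does fail for $(p,q)=(2,3)$, whereas $|H_J/Z(H_J)|$ being a multiple of $15$ below $30$ forces $H_J/Z(H_J)$ to be cyclic, so $H_J$ is always abelian and hence cyclic.
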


As a concluding remark of this section, we propose the following problem, 
which is a generalization of Theorem \ref{thm:pqr}. 

\begin{problem}
Let $n \in {\mathbb N}$ be square free. 
Suppose that there exists a TAV group of order $n$. 
Do there exist infinitely many  hyperbolic knots $K$ with $\ord (K) = n$? 
\end{problem}

\section{Faithful irreducible representation}\label{sec:5}

In this section, we show that for any faithful irreducible representation $\rho$ of a TAV group $G$ 
there exist a knot $K$ and an epimorphism $f \colon G(K) \to G$ with $\Delta_K^{\rho \circ f}(t) = 0$. 

It is known that evey representation of a finite group $G$ can be decomposed uniquely into a direct sum of irreducible representations of $G$. 
Especially, the regular representation of $G$ contains all the irreducible representations of $G$ as follows. 
Let $\rho$ be the regular representation of $G$ and $\rho_1,\rho_2, \ldots, \rho_k$ all the irreducible representations of $G$. 
It is known that $\rho$ is conjugate with a direct sum of $\rho_1,\rho_2, \ldots, \rho_k$ with multiplicities $\dim \rho_i$ 
and then $\Delta_K^{\rho \circ f}(t)$ can be expressed as  
the product of $\Delta_K^{\rho_1 \circ f}(t), \Delta_K^{\rho_2 \circ f}(t), \ldots , \Delta_K^{\rho_k \circ f}(t)$.  
Therefore $\Delta_K^{\rho \circ f}(t) = 0$ implies $\Delta_K^{\rho_i \circ f}(t) = 0$ for at least one irreducible representation $\rho_i$. 
In this sense, we should discuss which irreducible representation causes the twisted Alexander polynomial to be zero. 
As an answer for this question, we obtain the following. 

\begin{theorem}[Theorem \ref{thm:main-7}]\label{thm_faithfulirrrep}
Let $G$ be a TAV group and $\rho \colon G \to {\rm GL}(V)$ a faithful irreducible representation of $G$. Then, there exist a knot $K$ and an epimorphism $f \colon G(K) \to G$ such that $\Delta_K^{\rho \circ f}(t) = 0$.
\end{theorem}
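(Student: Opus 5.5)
We use the satellite construction of Subsection~\ref{subsec:3.0}, now with $\rho$ in place of the regular representation. By \cite[Lemma~1]{gru}, $G'$ contains a subgroup $H$ which is either (i) cyclic of order $pq$ with $p\neq q$ primes, or (ii) non-abelian metabelian of the form $C_p^n\rtimes C_q$ and of weight one. As before, we take a knot $K$ with an epimorphism $f_0\colon G(K)\to G$ and an unknotted loop $\alpha$ with ${\rm lk}(K,\alpha)=0$ such that $f_0(\alpha)$ normally generates $H$. We then set $J=T(p,q)$. In case (i) we take $f=f_0\circ\psi$; in case (ii) we glue $f_0$ with $f_J\colon G(J)\to H$. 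The whole proof then consists of redoing Propositions~\ref{const-prop-1} and~\ref{const-prop-2} with coefficients $V[t^{\pm1}]$ (over $\mathbb{C}$) instead of $\mathbb{Z}[G][t^{\pm1}]$. By Remark~\ref{rmk:torsion}, what we must show is that $H_1(E_{K(\alpha,J)};V[t^{\pm1}])$ is not $\mathbb{C}[t^{\pm1}]$-torsion.

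In case (i) we run the Mayer--Vietoris argument of Proposition~\ref{const-prop-1} with local coefficients. Because ${\rm lk}(K,\alpha)=0$, the variable $t$ acts trivially on $E_J$. The restriction of $V[t^{\pm1}]$ to $G(J)$ therefore factors through $\phi_J$ followed by $1\mapsto\rho(f_0(\alpha))$, and so it splits as a sum over the eigenvalues $\lambda$ of $\rho(f_0(\alpha))$ of one-dimensional twisted modules tensored with $\mathbb{C}[t^{\pm1}]$. The $\lambda$-summand contributes a free $\mathbb{C}[t^{\pm1}]$-part exactly when $\Delta_J(\lambda)=0$, that is, when $\lambda$ is a primitive $pq$-th root of unity. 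Since $\rho$ is faithful, $\rho(f_0(\alpha))$ has order exactly $pq$. The key step is to ensure that some eigenvalue is a \emph{primitive} $pq$-th root; faithfulness alone only gives that the eigenvalues have lcm of orders equal to $pq$. If no single eigenvalue works, we replace $\alpha$ by $\alpha'$ with $f_0(\alpha')=g$, where $g\in G'$ is chosen to have a primitive $pq$-th eigenvalue. To find such $g$ we use irreducibility: restrict $\rho$ to the cyclic group $C=\langle g\rangle$ and look for a character of order $pq$ occurring in it. Since $\rho$ is faithful and $C\le G'$, we can either conjugate, or multiply $g$ by an element of $C$ so as to combine an eigenvector with eigenvalue of order $p$ and one of order $q$, using Clifford theory for $C\cap{}$ suitable normal subgroups. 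This is the step I expect to be the main obstacle. A fallback is to use a different companion $J$ whose Alexander polynomial vanishes at roots of unity of orders $p$ and $q$, for instance $J=T(2,p)\#T(2,q)$ when $p,q$ are odd, adjusted for the prime $2$. This removes the primitivity issue and only requires $\rho(f_0(\alpha))$ to have a nontrivial eigenvalue. That in turn holds because $f_0(\alpha)\ne e$ and $\rho$ is faithful, and the eigenvalue has order dividing $pq$. Applied this way, the fallback disposes of case (i) cleanly, since the module over $E_J$ then has a free part that injects into the total $H_1$, exactly as $H_1(\widehat{E}_J)$ did before.

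In case (ii) we imitate Proposition~\ref{const-prop-2}. There we need a nontrivial lift $\tilde f\colon G(K(\alpha,J))\to V[t^{\pm1}]\rtimes(G\times\mathbb{Z})$, the analogue of Theorem~\ref{lifting-thm} with $V$-coefficients, which holds by the same cocycle argument: a crossed homomorphism not cohomologous to zero giving rank in $H^1$, equivalently in $H_1$. We define it by $\tilde f_J(u)=\bigl((\rho(f(u))-1)\xi;f(u),0\bigr)$, where $\xi=\sum_j\rho(f(m))^j\sum_j\rho(f(\ell))^j v$ for a vector $v$. The lift must vanish on $\pi_1(\partial E_J)$ and be nontrivial. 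Nontriviality requires $\xi\ne0$ and $(\rho(f(u_1))-1)\xi\ne0$, i.e. that the invariants $V^{\langle f(m),f(\ell)\rangle}$ are not fixed by $f(u_1)$. Since $f(\ell)=e$ and $f(m)$ generates a $C_q$ not normal in $H$, faithfulness of $\rho$ on $H$ implies that $H$ does not act trivially on $V^{C_q}$. Indeed, otherwise the normal closure of $C_q$, which is $H$, would fix $V^{C_q}$, giving $V^{C_q}=V^{H}$; and a faithful action of $C_p^n\rtimes C_q$ forces $\dim V^{C_q}>\dim V^H$ by a character count on $C_p^n$-eigenspaces permuted by $C_q$. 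Choosing $v$ accordingly, we obtain $\Delta_{K(\alpha,J)}^{\rho\circ f}(t)=0$.
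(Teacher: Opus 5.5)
Your case (ii) is sound and is essentially the paper's Lemma~\ref{v2-lem}. Your count giving $\dim V^{C_q}>\dim V^{H}$ for faithful $V$ is correct: an irreducible constituent on which $H'$ acts nontrivially is induced from a character of $C_p^n$ with a free $C_q$-orbit, so $C_q$ acts on it by its regular representation.

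The gap is case (i). It rests entirely on an element $g\in G'$ of order $pq$ such that $\rho(g)$ has a \emph{primitive} $pq$-th root of unity as an eigenvalue, and you never produce one. The remedies you suggest do not work as stated. Conjugation does not change eigenvalues. If $\rho|_C$ contains no character of order $pq$, then no element of the cyclic group $C$ has such an eigenvalue, so multiplying $g$ by elements of $C$ is useless. Moreover, the subgroup $C$ supplied by \cite[Lemma~1]{gru} need not be normal in $G$, so irreducibility of $\rho$ gives you no direct control over $\rho|_C$.

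The fallback is impossible. No knot $J$ has $\Delta_J(\zeta)=0$ for a root of unity $\zeta\neq1$ of prime-power order, because $\varphi_{p^k}(1)=p$ whereas $\Delta_J(1)=\pm1$. Equivalently, prime-power-fold cyclic branched covers of knots are rational homology spheres; this is exactly why a $p$-group $G'$ obstructs vanishing. Your example $T(2,p)\#T(2,q)$ has Alexander polynomial $\varphi_{2p}(t)\varphi_{2q}(t)$, which vanishes at roots of unity of orders $2p$ and $2q$, not $p$ and $q$.

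The missing idea is to take the two commuting elements from \emph{normal} subgroups of $G$, so that Lemma~\ref{irr-lem} applies: a nontrivial normal subgroup has no nonzero fixed vector under a faithful irreducible representation. The paper splits according to whether $G'$ is nilpotent.

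If $G'$ is nilpotent, the argument runs as follows.
\begin{enumerate}
\item Let $P,Q$ be the Sylow $p$- and $q$-subgroups of $G'$. Let $P_0$, $Q_0$ consist of the elements of $Z(P)$, resp.\ $Z(Q)$, of order dividing $p$, resp.\ $q$. These are characteristic in $G'$, hence normal in $G$.
\item For $e\neq g_p\in P_0$, pick an eigenvalue $\alpha\neq1$ of $\rho(g_p)$. Since $g_p\in Z(G')$, the eigenspace $V_\alpha$ is $Q_0$-invariant.
\item Since $V^{Q_0}=0$, simultaneously diagonalizing $\rho(g_p)$ and $\rho(Q_0)$ on $V_\alpha$ gives a common eigenvector on which $Q_0$ acts by a nontrivial character. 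Hence there is $g_q\in Q_0$ such that $\rho(g_pg_q)$ has a primitive $pq$-th root of unity as an eigenvalue.
\item Your case (i) construction (the paper's Lemma~\ref{v1-lem}) then finishes.
\end{enumerate}

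If $G'$ is not nilpotent, the paper passes to a Schmidt subgroup $H\le G'$. It uses the explicit list of irreducible representations of $H$ (Lemma~\ref{ma-lem}) to obtain one of two things. Either the weight element $z$ has eigenvalue $1$, and Lemma~\ref{v2-lem} applies. Or some $\rho(hz^{p^c})$ has a primitive $pq$-th root of unity as an eigenvalue, and Lemma~\ref{v1-lem} applies.

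The nilpotent argument alone does not close your case (i), because that case also occurs for non-nilpotent $G'$ with no subgroup of type (ii). For example, take $G={\rm GL}(2,3)$: here $G'={\rm SL}(2,3)$ has a unique involution, so it contains no non-abelian subgroup of type (ii). You therefore need to reorganize your case split along the paper's lines.
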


In order to show Theorem \ref{thm_faithfulirrrep}, we begin by proving the following theorem, 
which provides a necessary and sufficient condition for the twisted Alexander polynomial to vanish. 

\begin{theorem}\label{thm_52}
Let $K$ be a knot and $m \in G(K)$ a meridional loop. Let $\rho \colon G(K) \to {\rm GL}(V)$ be a representation. The twisted Alexander polynomial $\Delta_K^\rho(t)$ is zero if and only if there exists a nontrivial lift $\tilde{\rho} \colon G(K) \to (V \otimes \mathbb{Z}[t^{\pm 1}]) \rtimes ({\rm GL}(V) \times \mathbb{Z})$ of $\rho \times \phi \colon G(K) \to {\rm GL}(V) \times \mathbb{Z}$ such that $\tilde{\rho}(m) = (0; \rho(m), 1)$.
\end{theorem}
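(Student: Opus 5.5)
We identify lifts of $\rho\times\phi$ with crossed homomorphisms (1-cocycles) and then use Fox calculus on a Wirtinger-type presentation. This is the linear-representation analogue of Theorem \ref{lifting-thm}.

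Set $W=V\otimes\mathbb{Z}[t^{\pm1}]$, with $G(K)$ acting through $\rho\otimes\phi$. A homomorphism $\tilde\rho$ lifting $\rho\times\phi$ has the form $\tilde\rho(g)=(c(g);\rho(g),\phi(g))$. Here $c\colon G(K)\to W$ satisfies the cocycle condition $c(gh)=c(g)+(\rho\otimes\phi)(g)\,c(h)$. The normalization $\tilde\rho(m)=(0;\rho(m),1)$ means $c(m)=0$. Thus the statement becomes: $\Delta_K^\rho(t)=0$ if and only if there is a nonzero cocycle $c$ with $c(m)=0$. Note that "nontrivial" should be read as $c\neq 0$. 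Coboundaries are excluded automatically, because a coboundary vanishing on $m$ comes from a vector $v$ fixed by $t\rho(m)$. Such a $v$ must be $0$, since $\det(t\rho(m)-1)\ne 0$.

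Take a Wirtinger presentation $\langle x_1,\dots,x_n\mid r_1,\dots,r_{n-1}\rangle$ of $G(K)$ with $x_1=m$, all $x_i$ conjugate to $m$ and $\phi(x_i)=1$. A cocycle is determined by its values $c(x_i)=w_i\in W$. Such values define a cocycle exactly when the Fox-calculus equations $\sum_i \Phi(\partial r_j/\partial x_i)\,w_i=0$ hold, where $\Phi$ is induced by $\rho\otimes\phi$. This is the standard identification $Z^1(G(K);W)\cong\ker$ of the Fox matrix, which follows from the universal property of the free group together with the fundamental formula. Imposing $w_1=0$ deletes the first column. We are left with the square $(n-1)\times(n-1)$ block matrix $A$ over $\mathbb{Z}[t^{\pm1}]$, of size $(n-1)\dim V$. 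By Wada's formula, $\Delta_K^\rho(t)=\det A/\det\Phi(x_1-1)$. The denominator $\det(t\rho(m)-1)$ is nonzero. Hence $\Delta_K^\rho(t)=0$ if and only if $\det A=0$.

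Since $\mathbb{Z}[t^{\pm1}]$ is a domain, $\det A=0$ if and only if $A$ has a nonzero kernel vector over the fraction field $\mathbb{Q}(t)$. Clearing denominators gives a nonzero kernel vector with entries in $W=V\otimes\mathbb{Z}[t^{\pm1}]$. This assumes $V$ is a lattice $\mathbb{Z}^N$ or $R^N$ with $R$ a domain, as in Section \ref{sec:2}. Conversely, a nonzero kernel vector forces $\det A=0$. Translating back, a nonzero kernel vector with $w_1=0$ is a nonzero cocycle with $c(m)=0$, i.e. a nontrivial lift with $\tilde\rho(m)=(0;\rho(m),1)$.

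The main obstacle is setting up the dictionary between lifts, cocycles and Fox-matrix kernels, with the correct left/right conventions for the semidirect product. The row-vector convention in Section \ref{sec:2} may require transposing, i.e. using $\rho(g)^{-T}$ or a right action. One must also check that Wada's invariant computed with this presentation agrees with the homological order used to define $\Delta_K^\rho$. This can be handled via Remark \ref{rmk:torsion}: vanishing of the order is equivalent to $H_1$ having positive $\mathbb{Q}[t^{\pm1}]$-rank, which matches $\ker A\neq 0$ through the cellular chain complex of the presentation 2-complex.
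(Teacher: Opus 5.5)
Your proposal is correct and follows essentially the same route as the paper. Lifts with $\tilde\rho(m)=(0;\rho(m),1)$ are identified with derivations vanishing on $m$; these correspond, by Wada's Proposition~1, to kernel vectors of the Fox matrix with zero meridian block; and deleting that block column leaves Wada's square numerator matrix, which has a nonzero kernel exactly when its determinant, and hence $\Delta_K^\rho(t)$, vanishes. Your extra checks (excluding coboundaries, clearing denominators, and matching Wada's quotient with the homological order via the presentation $2$-complex) only make explicit what the paper's short proof leaves implicit, and the transposition you worry about does not arise: with the row-vector convention of Section~\ref{sec:2}, the chain complex and your cocycle equations use the same block matrix $\bigl(\Phi(\partial r_j/\partial x_i)\bigr)$.
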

\begin{proof}
We take a presentation $\langle x_1,\dots, x_s, x_{s+1} \mid r_1, \dots, r_s\rangle$ of $G(K)$ where $x_{s+1}$ corresponds to $m$. Using the definition in \cite{Wada94-1}, $\Delta_K^\rho(t) = 0$ if and only if the matrix $\Phi\left(\frac{\partial r_i}{\partial x_j}\right)_{i,j=1}^s$, regarded as a linear map from $(R[t^{\pm 1}])^{ns}$ to $(R[t^{\pm 1}])^{ns}$, has a nontrivial kernel.

By \cite[Proposition 1]{Wada94-1}, there exists a bijection between the kernel of the matrix $\Phi\left(\frac{\partial r_i}{\partial x_j}\right)^{1 \leq i \leq s+1}_{1 \leq j \leq s}$ regarded as a linear map from $(V \otimes \mathbb{Z}[t^{\pm 1}])^{s+1}$ to $(V \otimes \mathbb{Z}[t^{\pm 1}])^s$ and the set of the derivations of $G(K)$ with values in $V \otimes \mathbb{Z}[t^{\pm 1}]$. In this bijection, a vector in the kernel with last $n$ entries $0$ corresponds to a derivation which sends $m$ to $0$. Therefore we have $\Delta_K^\rho(t) = 0$ if and only if there exists a nontrivial derivation $f \colon G \to V \otimes \mathbb{Z}[t^{\pm 1}]$ with $f(m) = 0$. Since a set-theoretic lift $\tilde{\rho} = (f', \rho \times \phi) \colon G(K) \to (V \otimes \mathbb{Z}[t^{\pm 1}]) \rtimes ({\rm GL}(V) \times \mathbb{Z})$ of $\rho \times \phi \colon G(K) \to {\rm GL}(V) \times \mathbb{Z}$ is a group homomorphism if and only if $f' \colon G(K) \to V \otimes \mathbb{Z}[t^{\pm 1}]$ is a derivation, we obtain the theorem.
\end{proof}

By using Theorem \ref{thm_52}, 
we next show Lemma \ref{v1-lem} and Lemma \ref{v2-lem} that give sufficient conditions for the twisted Alexander polynomial to vanish.

\begin{lemma}\label{v1-lem}
Let $G$ be a finite group of weight one and $\rho \colon G \to {\rm GL}(V)$ a representation. If there exists an element $g \in G'$ of order $pq$, where $p$ and $q$ are distinct primes, such that $\rho(g) \in {\rm GL}(V)$ has a primitive $pq$-th root of unity as an eigenvalue, then there exist a knot $K$ and an epimorphism $f \colon G(K) \to G$ such that $\Delta_K^{\rho \circ f}(t) = 0$.
\end{lemma}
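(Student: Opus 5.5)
The plan is to imitate the satellite construction of Subsection \ref{subsec:3.0} in case (i), but with the representation $\rho$ in place of the regular representation.

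\textbf{Setup.} Since $w(G)=1$, choose a knot $K_0$ and an epimorphism $f_0\colon G(K_0)\to G$. The element $g$ lies in $G'$, which is the image of the commutator subgroup of $G(K_0)$. So we can take a loop $\alpha\subset E_{K_0}$ that bounds a disc in $S^3$, satisfies ${\rm lk}(K_0,\alpha)=0$, and has $f_0(\alpha)=g$. (This is the same choice as in Subsection \ref{subsec:3.0}: band-sum a null-homologous representative onto an unknot, after changing crossings of $\alpha$ with itself.) Let $J=T(p,q)$, whose Alexander polynomial is the $pq$-th cyclotomic polynomial. Put $K=K_0(\alpha,J)$ and $f=f_0\circ\psi$. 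Then $f$ is onto and $f_J(G(J))=\langle g\rangle\cong C_{pq}$.

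\textbf{Vanishing criterion.} We show $\Delta_K^{\rho\circ f}(t)=0$ by rerunning the proof of Proposition \ref{const-prop-1} with the local system $V\otimes\mathbb{Q}[t^{\pm1}]$ instead of $\mathbb{Q}[G][t^{\pm1}]$. Equivalently, the criterion is that $H_1(E_K;V[t^{\pm1}])$ is not $\mathbb{Q}[t^{\pm1}]$-torsion. Use the Mayer--Vietoris sequence for $E_K=E_{K_0\cup\alpha}\cup E_J$ with twisted coefficients.

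On $E_J$ the representation $\rho\otimes\phi$ restricts to $u\mapsto\rho(g)^{\phi_J(u)}$, because $\phi$ vanishes on $G(J)$ (the linking number is $0$). Diagonalize $\rho(g)$ over $\mathbb{C}$. Then $V[t^{\pm1}]|_{E_J}$ splits as a sum of rank-one systems $\mathbb{C}_\zeta\otimes\mathbb{C}[t^{\pm1}]$, where $\zeta$ runs over the eigenvalues and $t$ acts trivially.

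By hypothesis some $\zeta$ is a primitive $pq$-th root of unity. For that $\zeta$, $H_1(E_J;\mathbb{C}_\zeta)$ is nonzero because $\Delta_J(\zeta)=0$. Concretely, $\dim H_1(E_J;\mathbb{C}_\zeta)\ge 1$, and the contribution coming from the boundary torus vanishes since $\zeta\neq1$. After tensoring with $\mathbb{C}[t^{\pm1}]$, this piece is a free $\mathbb{C}[t^{\pm1}]$-module of positive rank. Meanwhile $H_*(\partial E_J;\mathbb{C}_\zeta)=0$, since the meridian acts by $\zeta\neq1$.

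\textbf{Conclusion from Mayer--Vietoris.} For this eigen-summand, the intersection terms vanish. Hence $H_1(E_J;\mathbb{C}_\zeta[t^{\pm1}])$ injects into $H_1(E_K;V\otimes\mathbb{C}[t^{\pm1}])$, in fact as a direct summand modulo the image of terms that vanish. So $H_1(E_K;V\otimes\mathbb{C}[t^{\pm1}])$ has positive rank. Therefore $\Delta_K^{\rho\circ f}(t)=0$, by Remark \ref{rmk:torsion} in its general form: the order of a module over a PID is zero iff the module is non-torsion, and this is unchanged by extending scalars to $\mathbb{C}$.

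An alternative is to go through Theorem \ref{thm_52}. Take a nonzero twisted $1$-cocycle $c$ on $G(J)$ with values in the $\zeta$-eigenline, vanishing on $\pi_1(\partial E_J)$. Extend it by $0$ on $G(K_0\cup\alpha)$, which is consistent because $c$ is $0$ on the torus. Then multiply by $1\in\mathbb{Z}[t^{\pm1}]$. This produces a nontrivial lift with $\tilde\rho(m)=(0;\rho(m),1)$, where $m$ is a meridian of $K_0$.

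\textbf{Main obstacle.} The delicate step is checking that the cocycle (or homology class) is genuinely nontrivial and vanishes on the boundary torus. For the meridian this holds because the cocycle is determined up to coboundary and $H^0$ of the torus with nontrivial rank-one coefficients is zero. For the longitude, $\ell$ lies in $G(J)''$, which maps trivially, and the twisted cocycle restricted to the abelian torus group with nontrivial character is a coboundary. We can therefore subtract that coboundary globally on $G(J)$ and keep a nonzero cocycle, nonzero because $\dim H^1(G(J);\mathbb{C}_\zeta)\ge1$ when $\Delta_J(\zeta)=0$. Finally, working over $\mathbb{C}$ while $\rho$ is defined over $\mathbb{Q}$ (or another field) is harmless, since vanishing of the order is preserved under field extension.
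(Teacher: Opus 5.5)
Your proposal is correct, and your primary argument takes a different route from the paper's.

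\textbf{The paper's route.} The paper computes no twisted homology. It takes the metabelian lift $\tilde\phi=(\tilde\phi_1,\phi)\colon G(J)\to(\mathbb{Z}[t]/(\varphi_{pq}(t)))\rtimes\mathbb{Z}$ coming from the Alexander module of $T(p,q)$, normalized by $\tilde\phi(m)=(0,1)$. It then essentially pushes this into $V$ by letting $t$ act as $\rho(g)$ on a $\zeta$-eigenvector. Peripheral vanishing is automatic here: $\tilde\phi_1(m)=0$ by the normalization, and $\ell\in G(J)''$. The result is glued with the zero derivation on $G(K_0\cup\alpha)$, and the paper concludes by Theorem~\ref{thm_52}. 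That is exactly your ``alternative'' paragraph. Your coboundary adjustment is the same normalization done by hand, which works because $H^1(\pi_1(\partial E_J);\mathbb{C}_\zeta)=0$.

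\textbf{Your main argument.} It instead transplants the Mayer--Vietoris proof of Proposition~\ref{const-prop-1} from $\mathbb{Q}[G]$ to arbitrary coefficients $V$. This avoids the derivation criterion entirely. It also gives quantitative information: $H_1(E_K;V\otimes\mathbb{C}[t^{\pm1}])$ contains a free $\mathbb{C}[t^{\pm1}]$-submodule of rank at least $\dim V_\zeta\cdot\dim H_1(E_J;\mathbb{C}_\zeta)$, where $V_\zeta$ is the $\zeta$-eigenspace of $\rho(g)$. The lift argument, by contrast, is shorter and produces an explicit nonzero derivation.

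\textbf{One point to phrase more carefully.} The eigenspace splitting of the coefficient system $V\otimes\mathbb{C}[t^{\pm1}]$ exists only over $E_J$ and $\partial E_J$, not over $E_{K_0\cup\alpha}$. So you cannot split the whole Mayer--Vietoris sequence by eigenvalue, as ``for this eigen-summand, the intersection terms vanish'' might suggest. What you actually use is the following. The group $H_1(\partial E_J;V\otimes\mathbb{C}[t^{\pm1}])$ lies entirely in the eigenvalue-$1$ summand. Hence the image of $H_1(\partial E_J)\to H_1(E_{K_0\cup\alpha})\oplus H_1(E_J)$ has zero component in $H_1(E_J;V_\zeta\otimes\mathbb{C}[t^{\pm1}])$. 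This gives injectivity of $H_1(E_J;V_\zeta\otimes\mathbb{C}[t^{\pm1}])\to H_1(E_K;V\otimes\mathbb{C}[t^{\pm1}])$, and your conclusion follows.
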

\begin{proof}
Since $G$ is of weight one, there exists a pair $(K_0, f_0)$ of a knot $K_0$ and an epimorphism $f_0 \colon G(K_0) \to G$. Let $J$ denote the $(p, q)$-torus knot $T(p,q)$ and define $f_J \colon G(J) \to \langle g \rangle \subset G$ by sending every meridional loop to $g$. Let $\alpha \subset E_{K_0}$ be a loop bounding a disk in $S^3 \supset E_{K_0}$ such that ${\rm lk}(K_0, \alpha) = 0$ and $f_0(\alpha) = g$. Let $K$ be the satellite knot $K_0(\alpha, J)$. By the definitions of $f_0$ and $f_J$, we can glue them to obtain $f \colon G(K) \to G$.

Because the Alexander polynomial of $J = T(p,q)$ is the $pq$-th cyclotomic polynomial $\varphi_{pq}(t)$, there exists a nontrivial lift $\tilde{\phi} = (\tilde{\phi}_1, \phi) \colon G(J) \to (\mathbb{Z}[t]/ (\varphi_{pq}(t))) \rtimes \mathbb{Z}$ of $\phi \colon G(J) \to \mathbb{Z}$ such that $\tilde{\phi}(m) = (0,1)$, where $m \in G(J)$ is a meridional loop of $J$. Then, define homomorphisms $\tilde{f}_0 \colon G(K_0 \cup \alpha) \to (V \otimes \mathbb{Z}[t^{\pm 1}]) \rtimes (G \times \mathbb{Z})$ and $\tilde{f}_J \colon G(J) \to (V \otimes \mathbb{Z}) \rtimes (G \times \mathbb{Z})$ by
\begin{align*}
\tilde{f}_0(u) &= (0; f_0(u), \phi(u)) & (u \in G(K_0 \cup \alpha)),\\
\tilde{f}_J(u) &= \bigl(\tilde{\phi}_1(u)(g_p g_q) \otimes 1; f_J(u), 0\bigr) & (u \in G(J)).
\end{align*}
Since $\tilde{\phi}_1(m)(t) = 0$, we obtain a homomorphism $\tilde{f} \colon G(K(\alpha, J)) \to (V \otimes \mathbb{Z}[t^{\pm 1}]) \rtimes (G \times \mathbb{Z})$ by gluing $\tilde{f}_0$ and $\tilde{f}_J$. By the definition of $\tilde{f}_0$, the first component of $\tilde{f}(m_K)$ for some meridional loop $m_K$ of $K$ ($K_0$) is zero, and $\tilde{f}$ is a nontrivial lift since $\tilde{\phi}$ is nontrivial. Therefore $\Delta_K^f(t) = 0$ by Theorem \ref{thm_52}.
\end{proof}

\begin{lemma}\label{v2-lem}
Let $G$ be a finite group of weight one and $\rho \colon G \to {\rm GL}(V)$ a faithful representation. If a non-commutative subgroup $H$ of $G'$ contains a weight element $h \in H$ such that $\rho(h) \in {\rm GL}(V)$ has $1$ as an eigenvalue, then there exist a knot $K$ and an epimorphism $f \colon G(K) \to G$ such that $\Delta_K^{\rho \circ f}(t) = 0$.
\end{lemma}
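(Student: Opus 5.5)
We follow the satellite construction of Subsection \ref{subsec:3.0} and the lifting criterion of Theorem \ref{thm_52}, as in Lemma \ref{v1-lem}, now in case (ii), where the companion carries a non-abelian image.

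\textbf{Step 1: the satellite knot and the homomorphism $f$.} Since $w(G)=1$, choose a knot $K_0$ and an epimorphism $f_0\colon G(K_0)\to G$. Since $H$ has weight one with weight element $h$, choose a knot $J$ and an epimorphism $f_J\colon G(J)\to H$ such that
\begin{itemize}
\item[(a)] $f_J(m)=h$ for the meridian $m$ of $J$, and
\item[(b)] $f_J(\ell)=e$ for the preferred longitude $\ell$.
\end{itemize}
For (b) we use the Gonz\'alez-Acu\~na/Johnson realization \cite{GA75-1}, \cite{Johnson80-1}, with the longitude killed, exactly as assumed in case (ii) of Subsection \ref{subsec:3.0}. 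Take an unknotted loop $\alpha\subset E_{K_0}$ with ${\rm lk}(K_0,\alpha)=0$ and $f_0(\alpha)=h$; this is possible because $h\in G'$. Put $K=K_0(\alpha,J)$. Since the meridian of $\alpha$ (glued to $\ell$) goes to $e$ and the longitude of $\alpha$ (glued to $m$) goes to $h$, the maps $f_0$ and $f_J$ glue to an epimorphism $f\colon G(K)\to G$.

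\textbf{Step 2: a nontrivial lift.} By Theorem \ref{thm_52}, it suffices to build a nontrivial lift $\tilde f\colon G(K)\to (V\otimes\Z[t^{\pm1}])\rtimes({\rm GL}(V)\times\Z)$ of $\rho f\times\phi$ that vanishes on a meridian of $K$. We imitate the proof of Proposition \ref{const-prop-2}, replacing the group ring by $V$.
\begin{itemize}
\item Pick $v\neq0$ in $V$ with $v\rho(h)=v$; this uses the eigenvalue-$1$ hypothesis.
\item On $G(K_0\cup\alpha)$, set $\tilde f(u)=(0;\rho f_0(u),\phi(u))$.
\item On $G(J)$, use the coboundary $\tilde f(u)=\bigl((v\rho f_J(u)-v)\otimes1;\ \rho f_J(u),0\bigr)$, adjusted to the left/right conventions of the semidirect product. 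Here $\phi$ vanishes on $G(J)$ because the linking number is $0$.
\end{itemize}
A coboundary is a derivation, so each piece is a homomorphism. On $\pi_1(\partial E_J)$ the images $f_J(m)=h$ and $f_J(\ell)=e$ both fix $v$, so the $J$-piece is $(0;\ldots)$ there and agrees with the other piece. The two pieces therefore glue, and the meridian of $K_0$ has zero first component.

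\textbf{Step 3: nontriviality (the main obstacle).} On $G(J)$ alone the lift is a coboundary, so nontriviality must come from the interaction with $t$. It is enough to find an element of $\ker(\rho f\times\phi)$ whose first component is nonzero. Following Proposition \ref{const-prop-2}:
\begin{itemize}
\item Since $H$ is non-abelian and normally generated by $h$, we have $H\neq\langle h\rangle$, so some $u_1\in G(J)$ has $f_J(u_1)\notin\langle h\rangle$. This is the substitute for ``not generated by $f(m),f(\ell)$''.
\item Let $u_2$ be a meridian of $K_0$, and evaluate $\tilde f$ on $(u_1u_2)^{N}u_2^{-N}$ with $N$ the product of the orders of $f(u_1u_2)$ and $f(u_2)$. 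The first component is
\[
\sum_{j=0}^{N-1}\bigl(v\rho f_J(u_1)-v\bigr)\rho f(u_1u_2)^{j}\otimes t^{j},
\]
up to conventions.
\item Its $t^0$ coefficient is $v\rho f_J(u_1)-v$, and distinct powers of $t$ cannot cancel. So the lift is nontrivial as soon as $v\rho(f_J(u_1))\neq v$.
\end{itemize}
The remaining difficulty is to choose $v$ and $u_1$ so that $v\rho(f_J(u_1))\neq v$. The fixed space $V^{H}$ is a proper subspace of $V^{h}$: if $V^{h}\subset V^H$ held for every admissible choice, we would want a contradiction with faithfulness. I would argue this as follows.
\begin{itemize}
\item If every $v\in V^h$ were fixed by all of $H$, then since $h$ normally generates $H$, every conjugate of $h$ in $H$ would have the same fixed space as $h$.
\item The subspace $V^H=V^h$ is nonzero, because $1$ is an eigenvalue of $\rho(h)$.
\item If $V^H$ happens to be fixed by all of $H$ with nothing more to extract, I would fall back on choosing $J$ and $f_J$ so that the element $u_1$ can be replaced by a conjugate, which gives freedom to move $v$.
\end{itemize}
I expect this step, exhibiting $v\in V^{h}\setminus V^{H}$ from faithfulness together with non-commutativity of $H$, to be the crux of the proof.
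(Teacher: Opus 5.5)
Your set-up is the paper's: the same satellite $K_0(\alpha,J)$ with $f_J(m)=h$ and $f_J(\ell)=e$, the same glued lift, and the same test element $(u_1u_2)^{N}u_2^{-N}$. The paper's lift is zero on $G(K_0\cup\alpha)$ and the coboundary $u\mapsto(\rho f_J(u)-1)v$ on $G(J)$. But there is a genuine gap exactly where you flag it. You never produce $v\in V^{h}$ and $u_1$ with $\rho(f_J(u_1))v\neq v$, and this is the whole content of the lemma.

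Since $f_J(G(J))=H$, your derivation is nonzero if and only if $v\notin V^{H}$. So everything reduces to showing $V^{h}\neq V^{H}$. (The $t^0$-coefficient computation is correct but unnecessary, since Theorem \ref{thm_52} only asks for a nonzero derivation vanishing on a meridian.) Your bullets do not establish $V^{h}\neq V^{H}$. Your fallback of moving to conjugates cannot help: $V^{ghg^{-1}}=gV^{h}$, and $V^{H}=\bigcap_{g\in H}gV^{h}$ because $H$ is generated by the $H$-conjugates of $h$. So if $V^{h}=V^{H}$, every conjugate of $h$ has the same fixed space, and every admissible $v$ yields the zero lift.

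The paper disposes of this step in one sentence. It takes an $H$-conjugate $h'\neq h$ of $h$, asserts that faithfulness gives $v_0$ with $hv_0=v_0$ and $h'v_0\neq v_0$, and then uses $u_1\in G(J)$ with $f(u_1)=h'$. That is the choice you were missing. However, you should not expect to derive it from faithfulness plus non-commutativity, as you hoped. Take $G=H=A_5$, $h$ a $5$-cycle, and $\rho$ the permutation representation on $\mathbb{Q}^5$. Every hypothesis of the lemma holds, yet $V^{h}$ is the line of constant vectors, which equals $V^{H}$.

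What actually makes the step work is the extra condition $V^{h}\not\subset V^{H}$. That condition is available where the lemma is applied in the proof of Theorem \ref{thm_faithfulirrrep}. There $v_0$ can be taken inside an irreducible $H$-subrepresentation $W$ on which $H$ acts nontrivially. Then $W^{H}=0$, since it is an $H$-invariant subspace, while $W^{z}\neq 0$ by Lemma \ref{ma-lem}. Any nonzero $v_0\in W^{z}$ is then moved by some $H$-conjugate of $z$, and the rest of your argument goes through unchanged. To complete your proof, you must either add $V^{h}\neq V^{H}$ as a hypothesis or verify it in the intended application.
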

\begin{proof}
Since $G$ is of weight one, there exists a pair $(K_0, f_0)$ of a knot $K_0$ and an epimorphism $f_0 \colon G(K_0) \to G$. Also, we can take a knot $J$ and an epimorphism $f_J \colon G(J) \to H$ with $f_J(m) = h$ and $f_J(\ell) = e$, where $m, \ell \in G(J)$ are a meridian and a preferred longitude of $J$, respectively. Let $\alpha \subset E_{K_0}$ be a loop bounding a disk in $S^3 \supset E_{K_0}$ such that ${\rm lk}(K_0, \alpha) = 0$ and $f_0(\alpha) = h$. Let $K$ be the satellite knot $K_0(\alpha, J)$. By the definitions of $f_0$ and $f_J$, we can glue them to obtain $f \colon G(K) \to G$.

We take an element $h' \neq h$ of $H$ conjugate to $h$. Since $\rho$ is faithful, we can take a nonzero element $v_0 \in V$ such that $h v_0 = v_0$ and $h' v_0 \neq v_0$, and set $v = v_0 \otimes 1 \in V \otimes \mathbb{Z}[t^{\pm 1}]$. Then, define homomorphisms $\tilde{f}_0 \colon G(K_0 \cup \alpha) \to (V \otimes \mathbb{Z}[t^{\pm 1}]) \rtimes (G \times \mathbb{Z})$ and $\tilde{f}_J \colon G(J) \to (V \otimes \mathbb{Z}) \rtimes (G \times \mathbb{Z})$ by
\begin{align*}
\tilde{f}_0(u) &= (0; f_0(u), \phi(u)) & (u \in G(K_0 \cup \alpha)),\\
\tilde{f}_J(u) &= \bigl(((f_J(u),0) - (e,0))v; f_J(u), 0\bigr) & (u \in G(J)).
\end{align*}
Remark that $((f(u),0) - (e,0))v = 0$ for $u \in \pi_1(\partial E_J)$. Therefore we obtain a homomorphism $\tilde{f} \colon G(K(\alpha, J)) \to (V \otimes \mathbb{Z}[t^{\pm 1}]) \rtimes (G \times \mathbb{Z})$ by gluing $\tilde{f}_0$ and $\tilde{f}_J$.

By the definitions of $\tilde{f}_0$ and $\tilde{f}_J$, $\tilde{f}$ is a lift of $f \times \phi$ (remark that $\phi$ is zero on $G(J)$ since ${\rm lk}(K, \alpha) = 0$). We take $u_1 \in G(J)$ such that $f(u_1) = h'$. Let $u_2 \in G(K \cup \alpha)$ be a meridional loop of $K$ and $d_1, d_2$ the orders of $f(u_1 u_2), f(u_2) \in G$. Then, we have
\begin{eqnarray*}
\tilde{f}((u_1u_2)^{d_1 d_2} u_2^{-d_1 d_2}) &=& \bigl(((f(u_1),0) - (e, 0))v; f(u_1 u_2), 1\bigr)^{d_1 d_2} (0; e, -d_1 d_2)\\
&=& \left( \sum_{j=0}^{d_1 d_2 -1} (f(u_1u_2)^j, j) ((f(u_1),0) - (e, 0))v; e, 0 \right).
\end{eqnarray*} 
Since $f(u_1)v_0 = h'v_0 \neq v_0$, $\sum_{j=0}^{d_1 d_2 -1} (f(u_1u_2)^j, j) ((f(u_1),0) - (e, 0))v \neq 0$. Therefore $\tilde{f}$ is a nontrivial lift and we have $\Delta^{\rho \circ f}_{K(\alpha, J)}(t) = 0$ by Theorem \ref{thm_52}.
\end{proof}

The following two lemmas are likely well known to experts. 
However, for the sake of completeness, we include their proofs. 

\begin{lemma}\label{irr-lem}
Let $G$ be a finite group and $\rho \colon G \to {\rm GL}(V)$ be an irreducible representation. For any normal subgroup $N$ of $G$, the subspace $W = \{v \in V \mid \text{$nv = v$ for any $n \in N$}\}$ is either $\{0\}$ or $V$. In particular, if $\rho$ is faithful, $W = \{0\}$.
\end{lemma}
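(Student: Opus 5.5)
The plan is to show that $W$ is a $G$-invariant subspace of $V$; irreducibility then leaves only the two options $\{0\}$ and $V$.

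First, $W$ is a linear subspace, because it is the intersection over $n \in N$ of the kernels of $\rho(n) - \mathrm{id}_V$. For invariance, take $g \in G$ and $v \in W$. For any $n \in N$, normality of $N$ gives $g^{-1} n g \in N$, so $n(gv) = g\,(g^{-1} n g)\,v = g v$. Hence $gv \in W$, so $W$ is a subrepresentation of $\rho$. Since $\rho$ is irreducible, $W = \{0\}$ or $W = V$.

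For the second assertion, assume $\rho$ is faithful and $W = V$. Then every $n \in N$ acts as the identity on $V$, that is, $\rho(n) = \mathrm{id}_V = \rho(e)$. Faithfulness forces $n = e$, so $N = \{e\}$. The claim $W = \{0\}$ therefore holds for \emph{nontrivial} normal subgroups $N$; for $N = \{e\}$ one has $W = V$ trivially, so the statement must be read with $N \neq \{e\}$. In the intended application, $N$ will be a nontrivial normal subgroup (for instance $G'$ or a subgroup of it) of a TAV group.

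There is no real obstacle here. The only care needed is the trivial-subgroup caveat just noted, plus the remark that $V \neq \{0\}$, which holds because irreducible representations are nonzero by definition.
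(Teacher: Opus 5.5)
Your proof is correct and is the paper's argument: normality gives $n(gv) = g\,(g^{-1}ng)\,v = gv$, so $W$ is $G$-invariant, and irreducibility then forces $W = \{0\}$ or $W = V$. Your caveat is also right: the faithful clause needs $N \neq \{e\}$. The paper leaves that clause unproved, and it only applies the lemma to nontrivial normal subgroups such as $Q_0$.
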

\begin{proof}
For any $g \in G, n \in N, v \in W$, $n(gv) = g(g^{-1} n g v) = gv,$ since $N$ is normal. Therefore we have $gW = W$ for any $g$, which means that $W$ is a $G$-invariant subspace. Since $V$ is assumed to be irreducible, $W$ is $\{0\}$ or $V$.
\end{proof}

Let $G$ be a finite non-nilpotent metabelian group and assume that every proper subgroup of $G$ is nilpotent. Then, the commutator subgroup $G'$ is the Sylow $q$-subgroup for some prime number $q$, and $G/G'$ is cyclic with order $p^h$ for some prime $p$. Let $P$ be a Sylow $p$-subgroup and $z \in P$ a generator; we find that the conjugacy action of $z$ on $G'$ has trivial fixed point and order $p$.

Let $\eta_k \colon G \to {\rm GL}(1, \mathbb{C})\;(k=0, \dots, p^h - 1)$ be the one-dimensional representation defined by $\eta_k(z) = \exp \frac{2\pi ik}{p^h}$ and $\eta_i|_{G'} = {\rm id}_{G'}$. We can easily find that these are the one-dimensional (irreducible) representations of $G$.

To construct the other representations, let $\mathcal{R}$ be the set of the nontrivial one-dimensional representations of $G'$, i.e., the set of the nontrivial homomorphisms from $G'$ to $\mathbb{C}^\times$. The quotient group $P/\langle z^p \rangle$ freely acts on $\mathcal{R}$, and each orbit of this action consists of exactly $p$ representations, i.e., $\rho, z^* \rho, \dots, (z^{p-1})^* \rho$ for a representation $\rho \in \mathcal{R}$. Denote $(z^i)^*\rho$ by $\rho_i \colon G' \to {\rm GL}(V_i)$. Then, we can define a $p$-dimensional representation of $G$ on $\bigoplus_{i=0}^{p-1} V_i$. By this procedure, we obtain representations $\tau_1, \dots, \tau_\ell$, where $\ell$ is the number of the orbits, i.e., $(|G'|-1)/p$.

\begin{lemma}\label{ma-lem}
The representations $\tau_1, \dots, \tau_\ell$ are irreducible. Furthermore, the following are the irreducible representations of $G$:
$$\eta_k\;(k=0,\dots, p^h-1),\quad \tau_j \otimes \eta_k\;(j=1, \dots, \ell; k=0,\dots, p^{h-1}-1).$$
\end{lemma}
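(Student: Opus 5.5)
The plan is to prove irreducibility by Mackey's criterion, or more directly by restricting to $G'$, and then to show the list is complete by counting the sum of squares of dimensions against $|G|$. Throughout, $G'$ is abelian (since $G$ is metabelian) and of order $q^a$. Write $|P|=p^h$, so that $|G| = p^h|G'|$.

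First, irreducibility of $\tau_j$. By construction, $\tau_j$ is the representation induced from the subgroup $G'\langle z^p\rangle$ (index $p$). On this subgroup, $z^p$ acts trivially on $G'$, because the conjugation action of $z$ has order $p$; hence $z^p$ is central and we can let it act by a suitable scalar (e.g.\ $1$). The restriction of $\tau_j$ to $G'$ is then $\rho\oplus z^*\rho\oplus\cdots\oplus (z^{p-1})^*\rho$. These summands are pairwise distinct characters of $G'$, because $P/\langle z^p\rangle$ acts freely on $\mathcal{R}$. So any $G$-invariant subspace $U$ is a sum of some of the isotypic lines $V_i$. Since $z$ permutes the $V_i$ cyclically, the only invariant sums are $0$ and the whole space, so $\tau_j$ is irreducible. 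The same argument applies verbatim to $\tau_j\otimes\eta_k$, since tensoring with a one-dimensional character preserves irreducibility.

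Next, pairwise non-isomorphism. The $\eta_k$ are distinct one-dimensional characters. For the $\tau_j\otimes\eta_k$, restricting to $G'$ recovers the orbit of $\rho$, so different $j$ give non-isomorphic representations. For a fixed $j$, compare characters at powers of $z$. The character of $\tau_j$ vanishes off $G'\langle z^p\rangle$, and on $z^{p}$ it equals $p\cdot c$ for the scalar $c$ by which $z^p$ acts. Twisting by $\eta_k$ multiplies this value by $\eta_k(z^p)=\exp(2\pi i k/p^{h-1})$. Hence $k$ is determined modulo $p^{h-1}$, which explains exactly the range $k=0,\dots,p^{h-1}-1$. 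Note also that $\tau_j\otimes\eta_k\cong\tau_j\otimes\eta_{k'}$ when $k\equiv k'\pmod{p^{h-1}}$: the character of $\eta_{k}$ restricted to $G'\langle z^p\rangle$ depends only on $k$ mod $p^{h-1}$, and an induced representation tensored with a character depends only on that character's restriction.

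Finally, completeness by counting dimensions. The sum of squares is
$$p^h\cdot 1 + \ell\, p^{h-1}\cdot p^2 = p^h + \frac{|G'|-1}{p}\,p^{h+1} = p^h|G'| = |G|,$$
so the list exhausts all irreducible representations.

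The main obstacle is bookkeeping the action of $z^p$: it is central in $G$, but it need not lie in $G'$ when $h>1$. One must check that the construction of $\tau_j$ fixes a definite scalar for $z^p$, so that the twists by $\eta_k$ are distinct precisely for $k$ modulo $p^{h-1}$. The facts used for freeness of the $P/\langle z^p\rangle$-action are all stated in the paragraph preceding the lemma: $z$ acts fixed-point-freely on $G'$ with order $p$, and therefore also acts freely on the nontrivial characters of $G'$ by duality.
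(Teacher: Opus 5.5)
Your proposal is correct and follows the same route as the paper: irreducibility of $\tau_j$ because its restriction to $G'$ splits into $p$ pairwise distinct characters that $z$ permutes transitively, irreducibility preserved by twisting with the one-dimensional $\eta_k$, inequivalence by comparing characters, and completeness from the count $p^h + \ell\, p^{h-1} p^2 = |G|$. Your realization of $\tau_j$ as induced from $G'\langle z^p\rangle$, together with your tracking of the scalar by which the central element $z^p$ acts, supplies the character comparison (and the exact range $k \bmod p^{h-1}$) that the paper only asserts as easy.
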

\begin{proof}
Let $\tau_j$ be defined on $V = V_0 \oplus \cdots \oplus V_{p-1}$ as above. Because the restriction of $\tau_j$ to $G'$ is decomposed into subrepresentations on $V_0, \dots, V_{p-1}$, any subrepresentation $W \subset V$ of $G$ is described as the direct sum $\bigoplus_{i \in I} V_i$ for some subset $I \subset \{0, \dots, p-1\}$. However, the action of $P$ permutes representations $V_i$ transitively, and hence $I$ is $\emptyset$ or $\{0, \dots, p-1\}$, i.e., $W$ equals $\{0\}$ or $V$; $V$ is irreducible.

Because $\eta_k$ are one-dimensional, $\tau_j \otimes \eta_k$ are also irreducible. Furthermore, we can easily check that the representations $\eta_k\;(k=0,\dots, p^h-1),\; \tau_j \otimes \eta_k\;(j=1, \dots, \ell; k=0,\dots, p^{h-1}-1)$ are mutually inequivalent by comparing the characters. Since the square sum of the dimensions is
$$p^h \cdot 1^2 + \frac{|G'| - 1}{p} \cdot p^{h-1} \cdot p^2 = p^h |G'| = |G|,$$
this is the complete set of irreducible representations.
\end{proof}

\begin{proof}[Proof of Theorem \ref{thm_faithfulirrrep}]
Since $G$ is of weight one, there exists a pair $(K_0, f_0)$ of a knot $K_0$ and an epimorphism $f_0 \colon G(K_0) \to G$. In the following, we take a satellite knot $K_0(\alpha, J)$ as $K$ and show that the corresponding twisted Alexander polynomial is zero.

Firstly, we consider the case where the commutator subgroup $G'$ of $G$ is nilpotent. By the well known property of nilpotent groups and Theorem \ref{thm:main-4}, $G'$ is the direct product of two or more nontrivial Sylow subgroups. Let $p, q$ be distinct prime divisors of $|G'|$ and let $P, Q$ denote the Sylow $p$-subgroup and Sylow $q$-subgroup, respectively. We set $P_0 = \{g \in P \mid g \in Z(P), g^p = e\}$, where $Z(P)$ is the center of $P$, and define $Q_0 \subset Q$ in the same way. Since $P_0$ and $Q_0$ are characteristic in $G'$, they are characteristic also in $G$ and especially normal subgroups in $G$. Let $g_p \in P_0$ be a nontrivial element. Since $\rho$ is faithful, we can take an eigenvalue $\alpha \neq 1$ of $\rho(g_p)$, and let $V_\alpha$ denote the eigenspace. Because $g_p \in Z(G')$, $V_\alpha$ is $G'$-invariant and especially $Q_0$-invariant. By Lemma \ref{irr-lem}, there exists $g_q \in Q_0$ such that $\rho(g_q)|_{V_\alpha} \neq {\rm id}_{V_\alpha}$; the order of some eigenvalue  of $\rho (g_p g_q)$ is $pq$. Then, there exists a required pair $(K, f)$ by Lemma \ref{v1-lem}.

Secondly, let us assume that $G'$ is not nilpotent and take a minimal non-nilpotent subgroup $H$ of $G'$. A minimal non-nilpotent group is called a \textit{Schmidt group}, and a classification of Schmidt groups is given in \cite{BER05-1}; in particular, $w(H) = 1$. As in the classification, there are exactly two prime divisors $p, q$ of $|H|$, and for one of them, $q$, the Sylow $q$-subgroup $Q$ is equal to the commutator subgroup. Furthermore, $H$ is a (possibly trivial) central extension of a metabelian group. Let us assume that $|H| = p^a q^b$. We take a Sylow $p$-subgroup $P$ and a generator $z$ of $P$; $z^p$ is in the center of $H$.

If $H$ is metabelian, take an irreducible subrepresentation $\rho_W \colon H \to {\rm GL}(W)$ of $\rho|_H$ with $\rho_W|_{H'}$ nontrivial. The kernel of $\rho_W$ is contained in the center $Z(H)$ of $H$, which is generated by $z^p$. If ${\rm Ker}\, \rho_W = Z(H)$, $\rho(z)$ has $1$ as an eigenvalue by Lemma \ref{ma-lem} and then we obtain the proposition by Lemma \ref{v2-lem}. If ${\rm Ker}\, \rho_W \neq Z(H)$, for some positive integer $c$, $\rho_W(z^{p^c})$ has a primitive $p$-th root of unity as an eigenvalue. Since $z^{p^c} \in Z(H)$, the eigenspace is $H$-invariant and hence equal to $H$. We take any $h \in H' \backslash \{e\}$ to find that $\rho_W(hz^{p^c})$ has a primitive $pq$-th root of unity as an eigenvalue. By Lemma \ref{v1-lem}, we can find a required pair $(K, f)$.

If $H$ is not metabelian, take an irreducible subrepresentation $\rho_W \colon H \to {\rm GL}(W)$ of $\rho|_H$ with $\rho_W|_{H''}$ nontrivial. We take any $h \in H' \backslash H''$. Since $H'' = Z(H)$, $h$ acts on $W$ as the multiplication by a $q$-th root of unity. Since the generator $z$ of $P$ is a weight element of $H$, $\rho_W(z^{p^c})$ for some non-negative integer $c$ has a $p$-th root of unity as an eigenvalue. We find that $\rho_W(hz^{p^c})$ has a $pq$-th root of unity to conclude the proposition by Lemma \ref{v1-lem}.
\end{proof}

%%%%%%%%%%%%%%%%%%%%%%%%%%%%%%%%%%%%%%%
\subsection*{Acknowledgments}
The authors are supported in part by 
JSPS KAKENHI Grant Numbers JP23K20799, 
JP25K07012, and JP25K17252. 

%%%%%%%%%%%%%%%%%%%%%%%%%%%%%%%%%%%%%%%
\bibliographystyle{amsplain}

\end{document}